 \def\beq{\begin{eqnarray}}
\def\eeq{\end{eqnarray}}
\newtheorem{thm}{Theorem}[section]
\newtheorem*{thm*}{Theorem}
\newtheorem{prop}[thm]{Proposition}
\newtheorem*{prop*}{Proposition}
\newtheorem{cor}[thm]{Corollary}
\newtheorem{lemma}[thm]{Lemma}
\newtheorem{exa}[thm]{Example}
\newtheorem*{question*}{Question}
\newtheorem{defn}[thm]{Definition}
\newtheorem{remark}[thm]{Remark}
\newtheorem{result}[thm]{Result}
\newtheorem{Results}[thm]{Results}
\newtheorem{notarem}[thm]{Notation and Remarks}
\newtheorem{notaconv}[thm]{Notational Conventions}
\definecolor{pink}{rgb}{1,0,1}
    \newcommand{\R}{\mathbb R}
    \newcommand{\pa}{\partial}
     \newcommand{\y}{\mathcal Y}
          \newcommand{\bell}{\pmb{\ell}}
         \newcommand{\balpha}{\pmb{\alpha}} 
    \def\stek{\operatorname{Stek}}
\def\isos{\operatorname{Iso}_{\stek}}
    \newcommand{\cE}{\mathcal{E}}
    \newcommand{\Z}{\mathbb Z}
\newcommand{\Om}{\Omega}
\newcommand{\pmc}{\pmb{C}}
         \def\abs{\operatorname{ab}}  
          \def\pmca{\pmb{C}_{\abs}}
          \newcommand{\cp}{\mathcal{P}^*}
   \newcommand{\cpn}{\mathcal{P}^*(n)}
    \newcommand{\bxi}{\pmb{\xi}}
     \def\omred{\Om^{\operatorname{red}}}
     \newcommand{\cpp}{\mathcal{P}^{**}}
 \newcommand{\ub}{\mathbf{u}}
            \newcommand{\bgamma}{\pmb{\gamma}} 
          \newcommand{\bdelta}{\pmb{\delta}}
             \def\poly{\operatorname{poly}}
\def\isop{\operatorname{Iso}_{\poly}}
\title[Steklov spectral finiteness]{The Steklov spectrum of convex polygonal domains I:  spectral finiteness}
\author[E. Dryden]{Emily B. Dryden}
\author[C. Gordon]{Carolyn Gordon} 
\author[J. Moreno]{Javier Moreno}
\author[J. Rowlett]{Julie Rowlett}
\author[C. Villegas-Blas]{Carlos Villegas-Blas}
\address{Emily B. Dryden, Department of Mathematics,  Bucknell University,  Lewisburg, PA 17837 USA} 
\urladdr{\href{http://www.unix.bucknell.edu/~ed012/}{http://www.unix.bucknell.edu/~ed012/}}
\email{\href{mailto:}{emily.dryden@bucknell.edu}}
\address{Carolyn Gordon, Department of Mathematics, Dartmouth College, Hanover, NH 03755 USA} 
\urladdr{\href{https://math.dartmouth.edu/~gordon/}{https://math.dartmouth.edu/~gordon/}}
\email{\href{mailto:}{carolyn.s.gordon@dartmouth.edu}}
\address{Javier Moreno, Department of Mathematics,  Universidad de Los Andes,  111711, Bogotá, Colombia}
\email{\href{mailto:}{jd.morenop@uniandes.edu.co}}
\address{Julie Rowlett,  Mathematical Sciences, Chalmers University,  412 96, Gothenburg, Sweden} 
\urladdr{\href{http://www.math.chalmers.se/~rowlett}{http://www.math.chalmers.se/~rowlett}}
\email{\href{mailto:julie.rowlett@chalmers.se}{julie.rowlett@chalmers.se}}
\address{Carlos Villegas Blas, Instituto de Matemáticas, Unidad Cuernavaca, 
Universidad Nacional Autónoma de México, 62210, Cuernavaca, Morelos, Mexico}
\urladdr{\href{https://www.matem.unam.mx/fsd/villegas}{https://www.matem.unam.mx/fsd/villegas}}
\email{\href{mailto:carlos.villegas@im.unam.mx}{carlos.villegas@im.unam.mx}}
\begin{document}
\maketitle 

\begin{abstract}
    We explore the Steklov eigenvalue problem on convex polygons, focusing mainly on the inverse Steklov problem.  Our primary finding reveals that, for almost all convex polygonal domains, there exist at most finitely many non-congruent domains with the same Steklov spectrum.  Moreover, we obtain explicit upper bounds for the maximum number of mutually Steklov isospectral non-congruent polygonal domains.  Along the way, we obtain isoperimetric bounds for the Steklov eigenvalues of a convex polygon in terms of the minimal interior angle of the polygon. 
\end{abstract}

\section{Introduction} \label{s:intro}

The Steklov eigenvalue problem on a bounded planar domain $\Om$, first introduced by Vladimir Andreevich Steklov in 1895, consists of finding all $\sigma\in\R$ for which there exists $0\neq u\in C^\infty(\Om)$ satisfying
\beq \Delta u = 0 \textrm{ in } \Omega, \quad \frac{\pa u}{\pa n} = \sigma u \, \, \textrm{ on } \pa \Omega \label{eq:steklov} \eeq 
where $\Delta$ is the Laplacian and $\frac{\pa}{\pa n}$ is the 
outward-pointing normal derivative.  
The Steklov spectrum, i.e., the collection of all such $\sigma$ repeated with multiplicity, 
 is discrete and satisfies 
\beq 0 = \sigma_0(\Omega) < \sigma_1 (\Omega) \leq \cdots \leq \sigma_m (\Omega) \leq \cdots \nearrow + \infty. \label{eq:def_stek_ev} \eeq 
Equivalently, the Steklov eigenvalues are those of the Dirichlet-to-Neumann operator, which maps the Dirichlet boundary values of harmonic functions on $\Om$ to their Neumann boundary values. 

 If the boundary $\pa \Omega$ is piecewise $C^1$, then the Steklov eigenvalues have Weyl asymptotics of the form given in \cite{agr06}: 
\beq \sigma_m = \frac{\pi m}{|\pa \Omega|} + o(m), \quad \textrm{ as } m \to \infty. \label{eq:weyl} \eeq
It follows that the perimeter is a Steklov spectral invariant for domains with piecewise $C^1$ boundaries.   

The Steklov eigenvalue problem lay mostly dormant for many years.    A breakthrough came in 1954 when Weinstock \cite{Wein} proved that the unit disk uniquely maximizes $\sigma_1(\Om)$ among all simply-connected planar domains of perimeter one.     Recent decades have seen tremendous interest in the Steklov problem, not only for planar domains but for more general compact Riemannian manifolds with boundary.  The very rich tapestry of results includes asymptotics, isoperimetric eigenvalue bounds, optimization of eigenvalues and a remarkable relationship to free boundary minimal surfaces in balls, inverse spectral results, numerical results, and much more.    See the surveys  \cite{gir_pol} and \cite{pre_survey} for exposition and many references in this very rapidly expanding area.  For historical background and physical implications, we refer to Kuznetsov et. al \cite{legacy}.

The impetus for the current paper arose from the powerful results of Levitin, Parnovski, Polterovich and Sher in \cite{lpps19} and in the subsequent article \cite{klpps21}, joint also with Krymski.   The focus of these papers is on simply-connected curvilinear $n$-gons $\Om$ with all interior angles lying in $(0,\pi)$.     They associate to each such $\Om$ a trigonometric polynomial $P_\Om$, referred to as the \emph{characteristic polynomial} of $\Om$.  The polynomial depends only on the edge lengths and angles of $\Om$.    In the former paper, they show that the roots of $P_\Om$ yield the Steklov spectral asymptotics of $\Om$ up to order $O(m^{-\epsilon})$ for some $\epsilon>0$.   In the latter, they show that the characteristic polynomial is a Steklov spectral invariant.   By applying this invariant, they show for generic curvilinear $n$-gons with angles in $(0,\pi)$ that the Steklov spectrum determines the edge lengths, and it moreover determines the angles up to countably many explicit possibilities.  The genericity conditions, referred to as \emph{admissibility}, consist of an incommensurability condition on the edge lengths together with the exclusion of angles of the form $\frac{\pi}{2m+1}$ with $m\in \Z^+$.  

Motivated by their results, we address the question of finite Steklov spectral determination of convex polygons.   First, however, we prove Steklov eigenvalue bounds for compact Riemannian surfaces with boundary and for triangles. Then we further develop our results to apply to non-convex polygonal domains. The eigenvalue bounds lead to an additional spectral invariant.
To address eigenvalue bounds that are independent of scaling, we adopt the commonly used normalization by the perimeter of the boundary, i.e., we consider $\sigma_k(\Om)L(\pa \Om)$.  We prove:

\begin{result}{\rm [See Theorem~\ref{th:awesome} for a more precise statement.]}
For each $n=3, 4, \dots$, there exist constants $C_n>0$ and $\delta_n$ such that if $\Om$ is any convex $n$-gon  with smallest angle $\alpha(\Om)<\delta_n$, then the Steklov eigenvalues of $\Om$ satisfy 
$$\sigma_k(\Om) L(\partial \Om) \leq C_n k^2\alpha(\Om), \quad \text{for all } k \geq 0.$$  

\end{result}

\begin{cor}\label{cor:intro}
A lower bound on the $k$th normalized eigenvalue yields a lower bound on all the interior angles of $\Om$.   In particular, there is a uniform lower bound on the angles of any collection of mutually Steklov isospectral convex $n$-gons.\end{cor}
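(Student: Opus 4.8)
The plan is to derive both statements directly from the contrapositive of Theorem~\ref{th:awesome}. For the first assertion, suppose we are given a lower bound $\sigma_k(\Om)L(\pa\Om) \geq c$ for some fixed $k \geq 1$ and some $c > 0$. I would split into two cases according to the threshold $\delta_n$ supplied by the theorem. If $\alpha(\Om) \geq \delta_n$, then the smallest interior angle is already bounded below by $\delta_n$. If instead $\alpha(\Om) < \delta_n$, then Theorem~\ref{th:awesome} applies and gives
$$c \leq \sigma_k(\Om) L(\pa\Om) \leq C_n k^2 \alpha(\Om),$$
so that $\alpha(\Om) \geq c/(C_n k^2)$. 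In either case $\alpha(\Om) \geq \min\{\delta_n,\, c/(C_n k^2)\}$. Since $\alpha(\Om)$ denotes the smallest interior angle, every interior angle of $\Om$ is bounded below by this same positive quantity, which proves the first claim.

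For the second assertion, the key observation is that the normalized eigenvalue $\sigma_k(\Om)L(\pa\Om)$ is itself a Steklov spectral invariant: each $\sigma_k(\Om)$ is a spectral invariant by definition, while the perimeter $L(\pa\Om)$ is recovered from the Weyl asymptotics \eqref{eq:weyl}. Now let $\mcs$ be any collection of mutually Steklov isospectral convex $n$-gons. Taking $k = 1$, the quantity $c := \sigma_1(\Om) L(\pa\Om)$ takes the same value for every $\Om \in \mcs$, and this common value is strictly positive because $\sigma_1 > 0$ by \eqref{eq:def_stek_ev}. Applying the first part of the corollary with this fixed $c$ and $k = 1$ yields the uniform lower bound $\alpha(\Om) \geq \min\{\delta_n,\, c/C_n\}$ for all $\Om \in \mcs$, independent of the particular polygon.

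The substance of the argument is entirely contained in Theorem~\ref{th:awesome}; the corollary is in essence a repackaging of that estimate via its contrapositive, so I do not expect any serious obstacle here. The only points requiring care are the bookkeeping in the case split, ensuring the final bound is uniform in the constants $C_n$ and $\delta_n$ that depend only on $n$, and the verification that the perimeter is a genuine spectral invariant, which follows immediately from \eqref{eq:weyl}.
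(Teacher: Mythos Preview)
Your proposal is correct and matches the paper's approach: the paper's Corollary~\ref{cor:awesome2} states precisely the bound $\alpha_j \geq \min\{\delta_n,\, \sigma_k(\Om)L(\pa\Om)/(C_nk^2)\}$ that you derived via the case split on $\alpha(\Om)$ relative to $\delta_n$, and the isospectrality consequence follows exactly as you argue from the spectral invariance of the perimeter.
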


 We then address spectral finiteness using the characteristic polynomial and in some cases also Corollary~\ref{cor:intro}:  

\smallskip

\begin{Results}~
\begin{itemize}
\item[(a)] {\rm [See Section~\ref{sec: isospec size}.]} For every convex $n$-gon $\Om$ that satisfies the generic conditions of admissibility, we obtain an explicit finite upper bound on the number (up to congruence) of convex $n$-gons with the same Steklov spectrum as $\Om$.   If, moreover, all angles of the admissible convex $n$-gon $\Om$ are obtuse, then $\Om$ is uniquely determined by its Steklov spectrum among all convex $n$-gons.

\item[(b)] {\rm [See Section~\ref{sec: finiteness}.]}   For convex $n$-gons satisfying some genericity conditions that are weaker than admissibility, we obtain further Steklov finiteness results by applying Corollary~\ref{cor:intro} along with the characteristic polynomial.

\end{itemize}
\end{Results}

We emphasize that additional tools need to be developed if one hopes to remove genericity assumptions completely.  Indeed, as noted in \cite{klpps21}, all parallelograms of fixed perimeter with angles $\frac{\pi}{5}, \frac{4\pi}{5}$ (more generally, $\frac{\pi}{2m+1}, \frac{2m\pi}{2m+1}$ for fixed $m\in \Z^+$) have the same characteristic polynomial.   Corollary~\ref{cor:intro} is of no help in this case.    In an upcoming paper, we will drop the genericity conditions and address questions of Steklov spectral determination within special classes of convex polygons including triangles, kites and regular polygons.   
In work in progress, we are also investigating the question of whether the characteristic polynomial distinguishes all convex $n$-gons from smoothly bounded simply-connected plane domains.

\subsection{Organization of this work} \label{ss:results}
In \S \ref{s:prelim} we review results of \cite{lpps19} and \cite{klpps21} and provide some simplifications in the context of convex polygons.  We then address eigenvalue bounds in \S \ref{s:ebounds_app}, bounds on the sizes of Steklov isospectral sets of admissible convex $n$-gons in \S 4, and inverse results under weaker genericity conditions in \S 5. We end with a brief comparison between the Steklov and Laplace inverse spectral problems and look towards the future of this field in \S 6.

\section*{Acknowledgements} This work was initiated at the BIRS-CMO workshop 22w5149.  We sincerely thank the organizers as well as all sponsors of the workshop. We thank David Sher, Alexandre Girouard and Iosif Polterovich for inspiring and insightful discussions and correspondence, and Amir Vig for posing the question concerning rational angles discussed in Remark \ref{rem:rat}. C. Villegas-Blas was partially supported by UNAM-PAPIIT-IN 116323  project.

\section{Preliminaries} \label{s:prelim} 
In this section we will recall some of the beautiful results of \cite{klpps21} providing Steklov spectral invariants for simply-connected curvilinear $n$-gons in $\R^2$. The edges of the curvilinear $n$-gons are assumed to be piecewise smooth and the angles at the $n$ vertices are required to lie in the interval $(0,\pi)$.   In the special case in which the edges are geodesic, i.e., the case of convex $n$-gons, we will see that some of their results take on a simpler form.  A polygon with edges that are line segments but that is not necessarily convex will be referred to simply as an $n$-gon.

\subsection{Curvilinear $n$-gons}  We follow the same labeling convention for the edge lengths and interior angles at the vertices as  \cite{klpps21}.

\begin{notaconv}\label{nota:bell and bal} We use $\ell_1,\dots, \ell_n$ to denote edge lengths and $\alpha_1,\dots,\alpha_n$ to denote the interior angles at the vertices.   We will usually abuse notation and use the same notation $\ell_j$, respectively  $\alpha_j$, to denote the $j$th edge, respectively vertex.    In settings where this can result in confusion, we will instead use $e_j$, respectively $v_j$, for the edges and vertices.   We always number the edges and vertices cyclically with vertex $\alpha_j$ occurring between edges $\ell_j$ and $\ell_{j+1}$ (see Figure~\ref{fig:tri_con}); $\ell_{n+1}$ is understood to be $\ell_1$.   The data associated with a curvilinear $n$-gon $\Om$ consists of its vectors of edge lengths and angles
$$\bell=(\ell_1,\dots,\ell_n)\mbox{\,\,\,and\,\,\,}\balpha=(\alpha_1,\dots, \alpha_n).$$
The cyclic labeling is unique only up to $2n$ possible permutations, corresponding to a choice of orientation of $\partial\Om$ and a choice of initial edge.    
\end{notaconv}

 \begin{figure}[h] \centering 
\begin{tikzpicture}
\draw   (-2,0) --  (2,0);
\draw  (-2, 0) -- (0, 3); 
\draw (0, 3) -- (2, 0); 
\node at (-2.2,0) {\small $\alpha_1$}; 
\node at (-1.6, 1.5) {\small $\ell_2$}; 
\node at (0, -0.3) {\small $\ell_1$}; 
\node at (2.3,0) {\small $\alpha_3$}; 
\node at (0, 3.3) {\small $\alpha_2$};
\node at (1.6, 1.5) {\small $\ell_3$}; 
\end{tikzpicture}
\caption{A triangle with angles and edges labeled as in \cite{klpps21}.}
\label{fig:tri_con}
\end{figure}
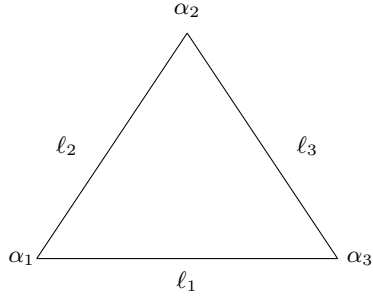

The primary tool we will use to obtain inverse spectral results is the characteristic polynomial $P_\Om$ of a curvilinear $n$-gon introduced in \cite{lpps19}; see also \cite{klpps21}. 

\begin{defn}\label{def: char poly}
The \em characteristic polynomial \em $P_\Om: \R\to \R$
is a trigonometric polynomial given by\em :\em
\beq P_{\Omega} (t) :=\frac{1}{2} \sum_{\bxi\in \{\pm1\}^n}\, a_{\bxi} \cos(|\bxi \cdot \bell| t) - \prod_{j=1} ^n \sin\left( \frac {\pi^2}{2 \alpha_j}\right). \label{eq:charpoly}\eeq
Here $a_{\bxi}$ is defined for $\bxi=(\xi_1,\dots, \xi_n)\in \{\pm1\}^n$ by 
\beq\label{eq:axi} a_{\bxi}=\prod_{\{j:\xi_j\neq \xi_{j+1}\}}\,c(\alpha_j),\eeq
where $a_{\bxi}$ equals $1$ if the product is over the empty set, and 
\beq\label{eq:c(alpha)} c(\alpha_j)=\cos\left(\frac{\pi^2}{2\alpha_j}\right).\eeq
The subscripts in $\bxi$ are cyclically ordered, so $\xi_{n+1}$ is understood to be $\xi_1$.
 Thus in the definition of $a_{\bxi}$, the product is either empty or contains an even number of factors, because there is always an even number of sign changes as one moves cyclically through the entries of $\bxi$ in order to return to the starting value.
\end{defn}

Although it may appear different, this definition is the same as the characteristic polynomial defined in \cite{lpps19} and \cite{klpps21}, which is straightforward to verify. This difference in appearance simplifies several of our proofs. 

Observe that the characteristic polynomial $P_\Om$ depends only on the data $\balpha(\Om)$ and $\bell(\Om)$.   Since $P_\Om$ is an even function, the roots occur in pairs $\pm \nu$.  Let
\beq\label{eq: nu}0\leq \nu_0(P_\Om)\leq \nu_1(P_\Om)\leq \nu_1(P_\Om)\leq\dots\eeq
be all the non-negative roots of $P_\Om$ where the positive roots are repeated according to their multiplicity and zero, if it occurs, is counted with half its multiplicity.  The remarkable main result of \cite{lpps19} is that the roots of the characteristic polynomial determine the asymptotics of the Steklov eigenvalues: 

\begin{thm}\label{thm:lpps}\cite[Theorem 1.4 and Remark 4.21]{lpps19}
Let  $\Om$ be a curvilinear $n$-gon with angles $\alpha_1, \dots ,\alpha_n\in(0,\pi)$.   Then the Steklov eigenvalues $\sigma_j(\Om)$ (see Equation~\eqref{eq:def_stek_ev}) satisfy
\beq\label{eq:asy}\sigma_j(\Om)-\nu_j(P_\Om)=O(j^{-\epsilon})\mbox{\,\,as\,\,}j\to\infty\eeq
for every $\epsilon$ satisfying 
\[0<\epsilon < \min\left(\left\{\frac{\pi}{2\alpha_k}-\frac 1 2: k=1,\dots, n\right\} \cup \left\{\frac 1 4\right\}\right).\] 
\end{thm}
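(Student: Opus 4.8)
The plan is to construct accurate quasimodes for the Dirichlet-to-Neumann operator on $\pa\Om$ out of explicit local solutions, and then to convert quasimode bounds into eigenvalue bounds via the min-max principle together with a matching counting argument. The Dirichlet-to-Neumann operator is a first-order pseudodifferential operator on the smooth part of $\pa\Om$ with principal symbol $|\xi|$; away from the corners this alone forces the leading Weyl behavior $\sigma_j\sim \pi j/L(\pa\Om)$. The entire subtlety lies in how the boundary waves interact with the corners, and this is exactly what $P_\Om$ encodes.

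First I would analyze the two relevant model problems. On each edge, modeled by a half-plane, the harmonic functions satisfying $\pa_n u=\sigma u$ on the boundary and decaying into the interior have boundary traces spanned by $e^{\pm i\sigma x}$, with $x$ arclength; thus on edge $\ell_j$ an approximate eigenfunction has boundary trace $A_j e^{i\sigma x}+B_j e^{-i\sigma x}$. At each vertex $\alpha_j$, modeled by the infinite plane sector of opening $\alpha_j$, I would solve the Steklov problem by separation of variables in polar coordinates, equivalently via the Mellin transform. This yields a scattering relation linking the amplitudes of the incoming and outgoing waves on the two edges meeting at the vertex, with coefficients that are trigonometric functions of $\pi^2/(2\alpha_j)$. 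Recording this as a $2\times 2$ transfer matrix at each vertex is the analytic heart of the argument, and it is here that the quantities $c(\alpha_j)=\cos(\pi^2/(2\alpha_j))$ and $\sin(\pi^2/(2\alpha_j))$ arise.

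Next I would assemble a global quasimode by transporting the edge amplitudes across each edge, which multiplies them by the phase $e^{\pm i\sigma\ell_j}$, and composing with the vertex transfer matrices around $\pa\Om$. Imposing that the resulting monodromy fix a nonzero amplitude vector, equivalently that it have eigenvalue $1$, gives a secular determinant condition on $\sigma$. Expanding this determinant over the $2^n$ choices of polarization $\xi_j\in\{\pm1\}$ on the $n$ edges, I expect it to reduce precisely to $P_\Om(\sigma)=0$: each polarization pattern $\bxi$ contributes an oscillatory term $\cos(|\bxi\cdot\bell|\,\sigma)$ from the phase $\bxi\cdot\bell$ accumulated along the edges, weighted by the product $a_{\bxi}$ of the corner factors $c(\alpha_j)$ at the vertices where the polarization reverses, while the constant part of the scattering determinant produces the $\sigma$-independent term $\prod_j\sin(\pi^2/(2\alpha_j))$. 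Thus the quasi-eigenvalues are exactly the roots $\nu_j(P_\Om)$.

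The hard part will be the error analysis, both in magnitude and in counting. For the magnitude I would glue the half-plane and sector models with cutoffs and estimate the commutator and matching error; the dominant correction comes from the subleading term in the sector eigenfunction expansion at each vertex, whose exponent is governed by $\tfrac{\pi}{2\alpha_k}-\tfrac12$, which is exactly the source of the restriction $\epsilon<\min_k\{\tfrac{\pi}{2\alpha_k}-\tfrac12\}$, while the remaining ceiling $\tfrac14$ should come from the generic quasimode construction error away from the corners. This already shows that the Steklov spectrum and the root set $\{\nu_j(P_\Om)\}$ are close as sets. The genuinely delicate point is to upgrade this to the labeled statement $\sigma_j-\nu_j=O(j^{-\epsilon})$: one must show that the quasimodes capture \emph{every} eigenvalue with correct multiplicity and that none are missed. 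I would accomplish this with a two-sided counting bound, using the min-max principle with the quasimodes as trial functions in one direction and a completeness argument for the constructed quasimodes in the other, so that the counting functions of $\{\sigma_j\}$ and $\{\nu_j\}$ agree up to the allowed error and the indices can be matched one to one.
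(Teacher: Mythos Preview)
The paper does not give a proof of this statement: Theorem~\ref{thm:lpps} is quoted verbatim from \cite{lpps19} as a background result, and the present paper only \emph{uses} it, never proves it. There is therefore no proof here against which to compare your proposal.

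That said, your outline is a plausible high-level description of the strategy behind \cite{lpps19}: boundary waves $e^{\pm i\sigma x}$ along edges, a scattering/transfer-matrix analysis at each corner producing the quantities $\cos(\pi^2/(2\alpha_j))$ and $\sin(\pi^2/(2\alpha_j))$, the monodromy determinant collapsing to $P_\Om$, and then a quasimode argument with a counting step to pass from ``close as sets'' to the labeled statement $\sigma_j-\nu_j=O(j^{-\epsilon})$. As written, though, it is only a sketch: you do not actually carry out the sector/Mellin analysis, the gluing estimates, or the two-sided counting, and each of these is a substantial piece of work in \cite{lpps19}. In particular, your attribution of the ceiling $\epsilon<\tfrac14$ to ``the generic quasimode construction error away from the corners'' is a guess rather than an argument; be careful here, since the precise origin of that bound is a technical point that your outline does not resolve.
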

  
In \cite{klpps21}, the authors use the Hadamard-Weierstrass factorization theorem and a result of \cite{ks20} on the zeros of periodic functions to show that $P_\Om$ is uniquely determined by the $o(1)$-asymptotics of its roots, thus yielding:

\begin{thm}\cite[Theorem 1.13 and Remark 1.15]{klpps21}\label{thm:klpps}
Let  $\Om$ and $\Om'$ be curvilinear $n$-gons with all angles in $(0,\pi)$.   Then the following are equivalent:
\begin{itemize}
\item[(a)] $\Om$ and $\Om'$ have the same characteristic polynomial;
\item[(b)] $\sigma_j(\Om)-\sigma_j(\Om')= o(1)\mbox{\,\,as\,\,}j\to\infty$; 
\item[(c)] For $\epsilon$ as in Theorem~\ref{thm:lpps}, we have $\sigma_j(\Om)-\sigma_j(\Om')= O(j^{-\epsilon})\mbox{\,\,as\,\,}j\to\infty$.

\end{itemize}

\end{thm}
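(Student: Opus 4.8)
The plan is to prove the three conditions equivalent by establishing the chain of implications (a) $\Rightarrow$ (c) $\Rightarrow$ (b) $\Rightarrow$ (a), of which only the last carries any real content. The two forward implications rest entirely on the asymptotic result of Theorem~\ref{thm:lpps}, whereas the reverse implication (b) $\Rightarrow$ (a) is where an analytic rigidity statement must be imported from \cite{ks20}.

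For (a) $\Rightarrow$ (c), suppose $P_\Om = P_{\Om'}$. Then the two characteristic polynomials have identical non-negative roots, so $\nu_j(P_\Om) = \nu_j(P_{\Om'})$ for every $j$. Applying Theorem~\ref{thm:lpps} to each domain and subtracting gives
\[
\sigma_j(\Om) - \sigma_j(\Om') = \bigl(\sigma_j(\Om) - \nu_j(P_\Om)\bigr) - \bigl(\sigma_j(\Om') - \nu_j(P_{\Om'})\bigr) = O(j^{-\epsilon}),
\]
which is exactly (c). The implication (c) $\Rightarrow$ (b) is immediate, since $O(j^{-\epsilon}) = o(1)$ for any $\epsilon > 0$.

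The heart of the matter is (b) $\Rightarrow$ (a). First I would run Theorem~\ref{thm:lpps} in the opposite direction: from $\sigma_j(\Om) = \nu_j(P_\Om) + o(1)$ and the analogous identity for $\Om'$, condition (b) yields $\nu_j(P_\Om) - \nu_j(P_{\Om'}) = o(1)$ as $j \to \infty$, so the two families of roots are asymptotically equal. Two preliminary reductions make this usable. First, the leading density of the roots is governed by the largest frequency appearing in \eqref{eq:charpoly}, namely the perimeter $L = \ell_1 + \cdots + \ell_n$ (equivalently, by the Weyl asymptotics \eqref{eq:weyl}); asymptotically equal roots therefore force $L = L'$, so $P_\Om$ and $P_{\Om'}$, regarded as entire functions on $\C$, have the same exponential type. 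Second, the top-frequency term of \eqref{eq:charpoly} has coefficient $1$ for every polygon, since the terms $\bxi = (1,\dots,1)$ and $\bxi = (-1,\dots,-1)$ each contribute $a_\bxi = 1$ and realize the unique maximal frequency $L$; thus both functions carry the same intrinsic normalization $\cos(Lt)$ of their leading behaviour.

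It then remains to upgrade asymptotic equality of zeros to exact equality of functions. Extending each characteristic polynomial to $\C$, it becomes an even real entire function of exponential type $L$, and by the Hadamard--Weierstrass factorization theorem it is reconstructed as a canonical product over its zeros; because the zeros occur in pairs $\pm\nu$, the genus-one exponential factors cancel and one is left with a genus-zero product of factors $(1 - t^2/\nu^2)$. The decisive input is the rigidity result of \cite{ks20} on the zeros of periodic functions, which guarantees that a function in this class is uniquely determined by the $o(1)$-asymptotics of its zeros once the normalization is fixed. Feeding in the common asymptotics of the roots together with the common leading coefficient identified above yields $P_\Om = P_{\Om'}$, which is (a). The hard part will be precisely this final analytic step: the passage from ``zeros agree up to $o(1)$'' to ``the functions coincide'' is genuinely rigid, and it is exactly the content supplied by \cite{ks20} in combination with the factorization argument of \cite{klpps21}, while everything preceding it is bookkeeping built on Theorem~\ref{thm:lpps}.
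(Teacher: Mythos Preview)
Your proposal is correct and follows essentially the same approach as the paper: the implications (a)$\Rightarrow$(c) and (c)$\Rightarrow$(b) are handled exactly as in the text (via Theorem~\ref{thm:lpps} and trivially, respectively), and for (b)$\Rightarrow$(a) you correctly identify the key ingredients---Hadamard--Weierstrass factorization combined with the rigidity result of \cite{ks20}---that the paper itself invokes in the paragraph preceding the theorem before deferring to \cite{klpps21}. If anything, you supply more detail than the paper does, since its own proof simply cites \cite[Theorem 1.13 and Remark 1.15]{klpps21} for the substantive equivalence.
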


The equivalence of (a) and (b) is the content of \cite[Theorem 1.13]{klpps21}. The implication (a)$\implies$(c) follows from  Theorem~\ref{thm:lpps} above, as noted in \cite[Remark 1.15]{klpps21}. Finally, (c)$\implies$(b) is immediate.

\begin{thm}\cite[Theorem 1.16]{klpps21}
    \label{poly is spec invar}
The characteristic polynomial $P_{\Omega}$ of a curvilinear $n$-gon $\Om$ with all angles in $(0,\pi)$ can be constructed algorithmically from the Steklov spectrum of $\Om$.   In particular, the characteristic polynomial is a Steklov spectral invariant of $\Om$.\end{thm}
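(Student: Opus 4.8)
The plan is to reduce the theorem to a reconstruction procedure built on the two results already in hand, Theorem~\ref{thm:lpps} and Theorem~\ref{thm:klpps}. The ``in particular'' clause is the easy half: if $\Om$ and $\Om'$ are Steklov isospectral then $\sigma_j(\Om)=\sigma_j(\Om')$ for every $j$, so trivially $\sigma_j(\Om)-\sigma_j(\Om')=o(1)$, and the implication (b)$\Rightarrow$(a) of Theorem~\ref{thm:klpps} gives $P_\Om=P_{\Om'}$. Thus once we exhibit an algorithm producing $P_\Om$ from the sequence $\{\sigma_j(\Om)\}$, spectral invariance follows as a corollary. The real content is the construction.

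First I would extract the coarse data that localizes the problem. From the Weyl asymptotics~\eqref{eq:weyl} the perimeter is recovered as $L=\lim_{m\to\infty}\pi m/\sigma_m$, and this controls every frequency appearing in~\eqref{eq:charpoly}: since all $\ell_j>0$ we have $|\bxi\cdot\bell|\le\sum_j\ell_j=L$, with equality exactly for the two constant sign vectors $\bxi=\pm(1,\dots,1)$, for which $a_{\bxi}=1$ (empty product). Hence $P_\Om$ is an even real exponential sum of type $L$ whose top frequency is precisely $L$, appearing with the known coefficient $1$; this is the normalization that will pin down the multiplicative ambiguity in any factorization argument. Next, Theorem~\ref{thm:lpps} identifies the non-negative roots $\nu_j(P_\Om)$ of~\eqref{eq: nu} with the eigenvalues up to $o(1)$, namely $\nu_j(P_\Om)=\sigma_j(\Om)+O(j^{-\eps})$. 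Thus the Steklov spectrum hands us the full $o(1)$-asymptotics of the root sequence of $P_\Om$, even though we do not know $\eps$ itself (only that some positive $\eps$ works, which is all that $o(1)$ requires).

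The core step is to invert, effectively, the map $P_\Om\mapsto(o(1)\text{-class of its roots})$, whose injectivity is exactly the Hadamard--Weierstrass/\cite{ks20} mechanism underlying Theorem~\ref{thm:klpps}. A clean route that sidesteps an explicit inversion is a \emph{search-and-certify} procedure: enumerate a computable dense family of candidate data $(\bell,\balpha)$ subject to $\sum_j\ell_j=L$ and angles in $(0,\pi)$, form the candidate polynomials via~\eqref{eq:charpoly}, and retain those whose exact roots match $\{\sigma_j(\Om)\}$ to within $2^{-k}$ over the first $N_k$ indices, with $N_k\to\infty$. Because the candidate coefficients lie in a bounded set and all frequencies lie in $[0,L]$, the resulting polynomials live in a compact family; any limit point $Q$ has roots $o(1)$-close to $\{\sigma_j(\Om)\}$, hence by Theorem~\ref{thm:klpps} equals $P_\Om$, so the candidates converge to $P_\Om$ and the procedure constructs it to arbitrary precision. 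A more explicit alternative reads off the lower frequencies one band at a time from the displacement sequence $s_j:=\sigma_j(\Om)-\pi(j+\tfrac12)/L$ of the zeros from the leading lattice, using that a uniform $o(1)$ perturbation leaves all Cesàro (Fourier--Bohr) averages unchanged, so these computable averages coincide with those of the true zero-displacement sequence of $P_\Om$.

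I expect the main obstacle to be converting the uniqueness statement of Theorem~\ref{thm:klpps} into a \emph{terminating, verifiable} procedure. In the search-and-certify route one must justify that finite approximate matching forces $o(1)$ matching in the limit (the compactness argument above) and that the enumeration is genuinely effective. In the displacement route the delicate points are that the zeros of a type-$L$ exponential sum need not be small perturbations of the lattice $\{\pi(j+\tfrac12)/L\}$ when the lower-order part is large, that several $|\bxi\cdot\bell|$ may collide, and that $P_\Om$ may carry finitely many non-real zeros absent from the real root sequence; all of these are precisely the phenomena the \cite{ks20} analysis is designed to handle, and controlling them while averaging away the $O(j^{-\eps})$ spectral error is where the work lies. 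Either way, the crux is effectivity, not existence.
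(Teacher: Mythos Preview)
The paper does not give its own proof of this statement: Theorem~\ref{poly is spec invar} is quoted verbatim from \cite[Theorem~1.16]{klpps21} and is used as a black box, with no argument supplied in the present paper. So there is no ``paper's proof'' against which to compare your proposal.

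That said, a brief comment on your sketch. Your reduction of the ``in particular'' clause to Theorem~\ref{thm:klpps} is correct and is exactly how the spectral invariance follows once the algorithmic construction is granted. For the constructive half, however, your search-and-certify procedure does not yield an algorithm in the sense claimed by the theorem: enumerating a dense family of data $(\bell,\balpha)$ and matching finitely many roots to finite precision produces, at best, a sequence of polynomials converging to $P_\Om$, not $P_\Om$ itself; and since the true data need not lie in any countable enumeration, no finite stage of the procedure can certify exact recovery. The actual argument in \cite{klpps21} proceeds differently: it exploits the Hadamard--Weierstrass factorization of $P_\Om$ together with the \cite{ks20} result that an even almost-periodic function of the relevant class is determined by the $o(1)$-asymptotics of its zeros, and it reconstructs the frequencies and coefficients of $P_\Om$ directly from that factorization rather than by search. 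Your second ``displacement'' idea is closer in spirit to this, but as you yourself note, handling colliding frequencies, non-real zeros, and the error control is precisely the technical content of \cite{ks20} and \cite{klpps21}, not something one can sidestep.
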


\begin{remark}\label{rem: smooth} Curvilinear polygons are simply-connected plane domains with piecewise smooth -- but not smooth -- boundary.    In order to use Theorem~\ref{thm:klpps} and Theorem~\ref{poly is spec invar} to compare the spectra of curvilinear $n$-gons to smooth domains, we can extend Definition~\ref{def: char poly} by defining the characteristic polynomial of a smooth plane domain $\Om$ of perimeter $\ell$ to be 
\beq P_\Om(t)=\cos(\ell t) -1. \label{eq:cp_smooth_domain} \eeq 
The sequence of non-negative roots $\nu_j(\Om)$ with multiplicities as in Equation~\eqref{eq: nu} is given in this case by
\[0, \frac{2\pi}{\ell}, \frac{2\pi}{\ell}, \frac{4\pi}{\ell}, \frac{4\pi}{\ell}, \dots\]
which is precisely the Steklov spectrum of a disk of circumference $\ell$.    Moreover, by the well-known Steklov asymptotics for smooth simply-connected plane domains \cite{rozenblum79}, \cite{edward93}, one has 
$\sigma_j(\Om)-\nu_j(\Om) = O(j^{-\infty})$ as $j\to\infty$ for every such domain.  Thus Theorem~\ref{thm:lpps} certainly holds.  Theorem~\ref{thm:klpps} also extends when one includes smooth domains along with curvilinear $n$-gons, since it is based on Theorem~\ref{thm:lpps} together with a proof that the asymptotics of the non-negative roots determine a trigonometric polynomial uniquely.   In particular, this theorem allows one to compare the asymptotics of a given curvilinear $n$-gon with the asymptotics of a smooth domain as in Example~\ref{ex: like smooth} below.    The extension of the definition of the characteristic polynomial to smoothly bounded domains will also be convenient for us in Section~\ref{sec: finiteness}.
\end{remark} 

\begin{exa}\label{ex: like smooth}   Let $\Om$ be a curvilinear $n$-gon satisfying 
\[\balpha(\Om)=\left( \frac{\pi}{2m_1 +1}, \frac{\pi}{2m_2 +1},\dots, \frac{\pi}{2m_n +1}\right).\]
Then one easily computes that 
\[P_\Om(t)=\cos(\ell t) +(-1)^{m+1}\]  
where $\ell$ is the perimeter of $\Om$ and $m:=m_1+\dots+m_n$.    In particular, if $m$ is  
even, then $P_\Om$ has the same characteristic polynomial as a disk and thus the same Steklov spectral asymptotics up to order $O(j^{-\epsilon})$ for all $\epsilon <\frac 1 4$. 
 (See Theorem~\ref{thm:klpps} and Remark~\ref{rem: smooth}.)     
\end{exa}

Observe that $\cos(\ell t)$ necessarily occurs in $P_\Om(t)$ with coefficient one corresponding to $\bxi=\pm(1,1,\dots, 1)$.   This term reflects the well-known fact that the perimeter of a compact planar domain is a Steklov spectral invariant.
Observe that at most $2^{n-1}$ distinct cosine frequencies of the form $\bxi\cdot\bell$ occur in the characteristic polynomial.  As in Example~\ref{ex: like smooth} above, if $c(\alpha_j)=0$, then some of the coefficients $a_{\bxi}$ will vanish.   Information is also lost if $\bxi\cdot\bell=0$ for some $\bxi$, in which case the corresponding cosine function will be absorbed into the constant term of the characteristic polynomial.   If there are repetitions among the various $\bxi\cdot\bell$, then one can have cancellations among their coefficients. 
The article \cite{klpps21} introduces genericity conditions on curvilinear $n$-gons, referred to as \emph{admissibility conditions}, to guarantee that $2^{n-1}$ distinct cosine frequencies appear in the characteristic polynomial with non-zero coefficients. In order to define their genericity conditions, we first introduce the intuitive language of rational, odd, and even angles.

\begin{defn} \label{def:exceptional}
We will say that an angle is \emph{rational} if it is a rational multiple of $\pi$.  Among the rational angles, those of the form $\frac{\pi}{k}$, where $k\in \Z$, will play an especially important role in what follows.     Angles of this form will be called \em odd\em, respectively \em even\em, angles 
if $k$ is an odd, respectively even, positive integer.   \emph{(}These angles are referred to as ``special,'' respectively ``exceptional,'' in \cite{klpps21}.\emph{)}  Observe that an angle $\alpha$ is odd if and only if $c(\alpha)=0$, 
while even angles $\alpha=\frac{\pi}{2m}$ satisfy $c(\alpha)=(-1)^m$.
Following \cite{klpps21}, we will refer to $(-1)^m$ as the \emph{parity} of the even angle $\frac{\pi}{2m}$.  Similarly, we will refer to $(-1)^j$ as the parity of the odd angle $\frac{\pi}{2j+1}$. 
\end{defn} 

\begin{defn}\cite[Definition 1.8]{klpps21} \label{def:generic} A curvilinear $n$-gon with all interior angles in $(0,\pi)$ is said to be \em admissible \em if the following two conditions hold:  (1) the side lengths $\ell_1, \ldots, \ell_n$ are incommensurable over $\{-1, 0, +1\}$ (that is, no non-trivial linear combination of $\ell_1, \ldots, \ell_n$ with coefficients taken from $\{-1, 0, 1\}$ vanishes);  and (2) none of the interior angles $\alpha_1, \ldots, \alpha_n$ are odd    (see Definition~\ref{def:exceptional}). 
\end{defn}

Admissibility can also be viewed as a restriction on the form of the characteristic polynomial.  A set of positive lengths $\ell_1,\dots, \ell_n$ is incommensurable over $\{-1, 0, 1\}$ if and only if all $\bxi\cdot\bell$ where $\bxi\in\{-1,1\}^n$ are distinct and non-zero.  A curvilinear $n$-gon $\Omega$ with all interior angles in $(0, \pi)$ is admissible if and only if its characteristic polynomial $P_\Om$ contains exactly $2^{n-1}$ linearly independent terms of the form $a\cos(ct)$ with $c\neq 0$.   In this case, the constant term in $P_\Om$ will be non-zero if and only if no interior angle of $\Om$ is even. 
 
These observations immediately imply the following proposition, which is a straightforward consequence of \cite{klpps21}. 

\begin{prop}\label{prop:vertices}\cite{klpps21} The characteristic polynomial distinguishes admissible curvilinear $n$-gons from all non-admissible curvilinear polygons that have at most $n$ vertices and have all interior angles in $(0,\pi)$.  Moreover, within the class of all admissible curvilinear polygons, the characteristic polynomial determines the number of vertices.
\end{prop}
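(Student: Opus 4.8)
The plan is to extract from the characteristic polynomial a single integer invariant --- the number of linearly independent nonzero-frequency cosine terms --- and to show that this integer already separates an admissible $n$-gon from every competitor in the stated class. For a trigonometric polynomial written as $P(t)=b_0+\sum_{c>0}b_c\cos(ct)$, let $N(P)$ denote the number of frequencies $c>0$ with $b_c\neq 0$. Since the functions $\cos(ct)$ for distinct $c>0$ are linearly independent, $N(P)$ is well defined and is read off directly from $P$; in particular, two curvilinear polygons with the same characteristic polynomial share the same value of $N$, and, via Theorem~\ref{poly is spec invar}, so do two polygons with the same Steklov spectrum.

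First I would record two facts, both already available from the discussion preceding the proposition. For any curvilinear $m$-gon $\Om'$ with angles in $(0,\pi)$, the frequencies occurring in $P_{\Om'}$ are the numbers $|\bxi\cdot\bell'|$ with $\bxi\in\{\pm1\}^m$, where $\bell'$ is the edge-length vector of $\Om'$; because $\bxi$ and $-\bxi$ leave the set of sign-change indices $\{j:\xi_j\neq\xi_{j+1}\}$ unchanged, they produce the same frequency and the same coefficient $a_{\bxi}=a_{-\bxi}$, so there are at most $2^{m-1}$ distinct positive frequencies and hence $N(P_{\Om'})\le 2^{m-1}$. Moreover, by the characterization of admissibility recalled just above (Definition~\ref{def:generic} and the ensuing observations), equality $N(P_{\Om'})=2^{m-1}$ holds precisely when $\Om'$ is admissible; equivalently, a non-admissible $m$-gon satisfies the strict inequality $N(P_{\Om'})<2^{m-1}$.

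With these in hand the argument is a monotonicity count. Fix an admissible $n$-gon $\Om$, so $N(P_\Om)=2^{n-1}$, and let $\Om'$ be a non-admissible curvilinear polygon with $m\le n$ vertices and angles in $(0,\pi)$. If $m=n$, non-admissibility gives $N(P_{\Om'})<2^{n-1}=N(P_\Om)$. If $m<n$, then $N(P_{\Om'})\le 2^{m-1}\le 2^{n-2}<2^{n-1}=N(P_\Om)$, using that $k\mapsto 2^{k-1}$ is strictly increasing. In either case $N(P_{\Om'})\neq N(P_\Om)$, so $P_{\Om'}\neq P_\Om$, proving the first assertion. The second assertion is the same count applied on both sides: if $\Om$ and $\Om'$ are admissible with $n$ and $n'$ vertices respectively, then $N(P_\Om)=2^{n-1}$ and $N(P_{\Om'})=2^{n'-1}$, so $P_\Om=P_{\Om'}$ forces $2^{n-1}=2^{n'-1}$, hence $n=n'$.

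There is no serious obstacle here beyond bookkeeping: the entire content is the characterization of admissibility as the identity $N(P_\Om)=2^{n-1}$, which is exactly the equality case of the a priori bound $N\le 2^{m-1}$. The only points demanding care are that $N(P)$ be genuinely determined by $P$ (so that equal characteristic polynomials, and a fortiori equal Steklov spectra, yield equal counts) and that the bound $N(P_{\Om'})\le 2^{m-1}$ be applied with the actual number of vertices $m$ of the competitor rather than with $n$; note that for $m<n$ the strict gap $2^{m-1}<2^{n-1}$ makes non-admissibility of $\Om'$ unnecessary, whereas for $m=n$ it is exactly the strict inequality coming from non-admissibility that is needed.
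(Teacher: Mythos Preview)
Your proof is correct and follows exactly the approach the paper has in mind: the paper states the proposition as an immediate consequence of the observation that an $m$-gon is admissible if and only if its characteristic polynomial has exactly $2^{m-1}$ linearly independent nonzero-frequency cosine terms, and you have simply made this counting argument explicit. Your careful case split on $m=n$ versus $m<n$ is precisely the monotonicity the paper is invoking when it says the observations ``immediately imply'' the proposition.
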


We note, however, that an admissible curvilinear $n$-gon may have the same characteristic polynomial as a non-admissible curvilinear polygon with more than $n$ vertices; see Lemma~\ref{om vs omred}.  Before we proceed with stating further inverse spectral results for admissible curvilinear $n$-gons, we recall additional notation from \cite{klpps21}.

\begin{notarem}\label{nota:excep comps}~
Let $\Om$ be a curvilinear $n$-gon with interior angles $\alpha_1,\dots, \alpha_n$.
Write 
\beq \label{eq:pmbC} \pmb{C}(\Om)= (c(\alpha_1),\dots, c(\alpha_n)) \ \ \ \text{and}\ \ \ \pmca(\Om)=(|c(\alpha_1)|,\dots, |c(\alpha_n)|), \eeq 
where $c(\alpha_j)=\cos \left( \frac{\pi^2}{2 \alpha_j} \right)$ as in Definition~\ref{def: char poly}.  Suppose that exactly $k\geq 1$ of the interior angles of $\Omega$ are even.  
The corresponding $k$ vertices split the boundary $\partial\Omega$ into $k$ components $\y_1, \dots, \y_k$ that are referred to as \emph{exceptional components} in \cite{klpps21}.   With a choice of orientation, each exceptional component is described by its vectors of ordered edge lengths and angles:
\beq \label{eq:exceptional_components}
\bell(\y_j)=(\ell_1^j, \dots, \ell_{n_j}^j) \,\,\mbox{and}\,\,\balpha(\y_j)=(\alpha_1^j,\dots \alpha_{n_j-1}^j). \eeq 
Write
\beq \label{eq:C_exceptional_components}
\pmb{C}(\y_j)= \left(c(\alpha^j_1),\dots,  c(\alpha_{n_j-1}^j) \right). \eeq 
   We denote by $-\y_j$ the component $\y_j$ with its orientation reversed.   Following \cite{klpps21}, we refer to $-\y_j$ as the \emph{inverse} of $\y$.
\end{notarem}

We shall repeatedly make use of the following powerful result of \cite{klpps21}.

\begin{thm}\cite[Theorem 1.17]{klpps21} \label{th:ss_ngons}
We use the notation of~\ref{nota:bell and bal}, \ref{def:generic}, and \ref{nota:excep comps}.   
Suppose that $\Omega$ and $\Omega'$ are admissible curvilinear $n$-gons that have the same characteristic polynomial.  Then
\begin{enumerate} 
\item[(a)] $\Omega$ and $\Omega'$ have the same number of even angles.
\item[(b)] If they have no even angles, then the boundary orientations and cyclical labeling of the edges and vertices can be chosen so that 
\[ \bell(\Omega)=\bell(\Omega') \,\,\mbox{and}\,\, \pmb{C}(\Omega)=\pm\pmb{C}(\Omega')\]
for some choice of $\pm$.   
\item[(c)] If there is at least one even angle, then there exists a one-to-one correspondence between the exceptional components of $\Om$ and $\Om'$ such that corresponding exceptional components $\y_j$ and $\y'_j$ satisfy either 
\[\bell(\y_j)=\bell(\y'_j)\,\,\mbox{and}\,\,\pmb{C}(\y_j)=\pm\pmb{C}(\y'_j)\]
or else
\[\bell(\y_j)=\bell(-\y'_j)\,\,\mbox{and}\,\,\pmb{C}(\y_j)=\pm \pmb{C}(-\y'_j)\]
for some choice of $\pm$. 
\end{enumerate}
\end{thm}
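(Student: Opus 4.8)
The plan is to reduce the statement to a purely algebraic reconstruction problem for the characteristic polynomial, exploiting admissibility from the outset. Since both $\Om$ and $\Om'$ are admissible, each of $P_\Om$ and $P_{\Om'}$ is a sum of \emph{exactly} $2^{n-1}$ cosine terms with distinct nonzero frequencies and nonzero coefficients: having no odd angle forces every $c(\alpha_j)\neq 0$ and hence every $a_{\bxi}\neq 0$, while incommensurability of the $\ell_j$ over $\{-1,0,1\}$ makes all $|\bxi\cdot\bell|$ distinct and nonzero, so no term is absorbed into the constant and no two terms coalesce. Using $a_{-\bxi}=a_{\bxi}$ and $|{-\bxi}\cdot\bell|=|\bxi\cdot\bell|$ to pass to the $2^{n-1}$ sign-pairs, I would encode each pair by its even-size cyclic \emph{sign-change set} $D=\{j:\xi_j\neq\xi_{j+1}\}$, so that the coefficient is $a_D=\prod_{j\in D}c(\alpha_j)$. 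Equating $P_\Om=P_{\Om'}$ as trigonometric polynomials then forces the constant terms to agree and yields a bijection $D\mapsto D'$ that preserves both the frequency and the coefficient; because the frequencies are distinct, this bijection is \emph{uniquely} determined by matching equal frequencies.

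First I would recover the edge lengths. The frequency set is the symmetric signed sumset $\{\sum_j\xi_j\ell_j\}$, whose maximum is the perimeter $L=\ell_1+\dots+\ell_n$, attained at $D=\varnothing$ with coefficient $1$. Peeling downward from $L$ and using incommensurability over $\{-1,0,1\}$, I would show that two such signed sumsets coincide only if the underlying edge-length multisets coincide, giving $L=L'$ and matching $\bell(\Om)$ and $\bell(\Om')$ as multisets. At the same time, the constant term $-\prod_j\sin(\pi^2/2\alpha_j)$ vanishes exactly when some $\alpha_j$ is even (since $\sin(\pi^2/2\alpha)=0$ iff $\alpha$ is even, and odd angles are excluded), so the \emph{presence or absence} of even angles is detected immediately and separates case (b) from case (c); the exact count in conclusion (a) will follow from the component bijection produced below.

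The substance lies in recovering the cyclic arrangement and the signs of the $c(\alpha_j)$. The key point is that $c(\alpha_j)$, attached to the vertex between $\ell_j$ and $\ell_{j+1}$, enters $a_D$ precisely when $D$ separates $\ell_j$ from $\ell_{j+1}$; comparing which frequencies (hence which cyclic edge-blocks) carry a given $c$-factor lets me align the cyclic labelings of $\Om'$ and $\Om$ up to orientation and choice of initial edge. In case (b), no even angle means every $|c(\alpha_j)|<1$, the reconstruction produces a \emph{global} cyclic alignment, and writing $c(\alpha'_j)=\epsilon_j\,c(\alpha_j)$ with $\epsilon_j=\pm1$, the relations $\prod_{j\in D}\epsilon_j=1$ over all even $D$ (already for all size-two $D=\{j,k\}$, giving $\epsilon_j\epsilon_k=1$) force a single common sign, yielding $\bell(\Om)=\bell(\Om')$ and $\pmb{C}(\Om)=\pm\,\pmb{C}(\Om')$. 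In case (c), each even vertex has $c(\alpha_j)=\pm1$, a factor invisible in magnitude: reflecting the arc of $\partial\Om$ between two even vertices (the operation $\y_j\mapsto-\y_j$) preserves every signed sum and every coefficient, so the char poly cannot detect it and the global alignment breaks down. The reconstruction then survives only \emph{within} each exceptional component (each free of even angles), producing a one-to-one correspondence of components on which the case-(b) analysis applies, with the inter-component orientation and an independent per-component sign undetermined — exactly the two options $\bell(\y_j)=\bell(\pm\y'_j)$, $\pmb{C}(\y_j)=\pm\,\pmb{C}(\pm\y'_j)$; matching the number of components (equivalently the cut points) then gives conclusion (a).

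The main obstacle is this last reconstruction of cyclic order together with the signs, and in particular the decoupling at even vertices that forces the exceptional-component formulation; the signed-sumset recovery of the edge multiset, although morally a ``peel from the top'' argument, is a secondary point that must still be executed carefully under incommensurability. I would expect the cleanest route to be an induction on the number of even vertices: prove (b) as the base case of zero even vertices (hence a single global sign), and then bootstrap to (c) by treating each exceptional component as an even-angle-free polygon to which (b) applies, while bookkeeping the orientation-reversal ambiguity $\y_j\mapsto-\y'_j$ and the free relative sign between components.
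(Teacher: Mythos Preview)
This theorem is not proved in the present paper: it is quoted as \cite[Theorem 1.17]{klpps21} with no argument supplied, so there is no in-paper proof to compare your proposal against. The actual proof lives in Krymski--Levitin--Parnovski--Polterovich--Sher.

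That said, your outline is a reasonable reconstruction of the architecture one expects --- admissibility forces exactly $2^{n-1}$ distinct nonzero frequencies with nonzero coefficients, termwise matching of trigonometric polynomials recovers the frequency/coefficient data, and even vertices (where $|c(\alpha)|=1$) decouple the reconstruction into exceptional components --- but two steps remain only sketched rather than proved. The claim ``comparing which frequencies carry a given $c$-factor lets me align the cyclic labelings'' is the crux, and you have not actually executed it: one must show that the family of products $\prod_{j\in D}c(\alpha_j)$ over even cyclic sign-change sets $D$, paired with the corresponding frequencies $|\bxi\cdot\bell|$, determines the cyclic arrangement of edge lengths and vertex cosines up to precisely the stated ambiguities, and this is where the substantive combinatorics in \cite{klpps21} lies. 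Likewise, your assertion that reflecting an exceptional component $\y_j\mapsto -\y_j$ preserves every $a_{\bxi}$ and every $|\bxi\cdot\bell|$ is correct but is not immediate, since the reversal permutes the $\bxi$'s nontrivially and one must check that this permutation respects both invariants simultaneously.
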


Theorem 1.17 in \cite{klpps21} further asserts that for each exceptional component of an admissible curvilinear $n$-gon $\Om$, the characteristic polynomial determines whether the even angles at its two ends have the same or the opposite parity in the sense of Definition~\ref{def:exceptional}.
In part (c) of Theorem \ref{th:ss_ngons}, we emphasize that the one-to-one correspondence does not necessarily respect the order in which the exceptional components appear.   In particular, adjacent exceptional components in $\Om$ need not correspond to adjacent ones in $\Om'$.

\begin{remark}\label{rem: rem on thm}~
 If $\Om$ has precisely one even angle, then there is only one exceptional component $\y$.   Reorienting $\y$ is equivalent to simply reorienting $\pa\Om$ and thus is a trivial change, i.e., it does not affect the isometry class.  Part (c) of the theorem implies in this case that the boundary orientation of $\Om'$ can be chosen so that $\bell(\Om)=\bell(\Om')$ and $\pmb{C}(\y)=\pm\pmb{C}(\y')$.   Thus, up to the choice of boundary orientation and cyclic labeling, the characteristic polynomial determines $\pm\pmb{C}(\Om)$ up to the sign of the entry $\pm 1$ (the entry corresponding to the even angle) and up to a global sign change of all the remaining entries.   In particular, it determines $\pmca(\Om)$ uniquely up to trivial changes.
\end{remark}

\begin{remark}\label{rem:rat}
Amir Vig raised the following question to us:  Does the Steklov spectrum of an $n$-gon detect whether the angles are rational multiples of $\pi$? Theorem~\ref{th:ss_ngons} yields a positive answer in the case of admissible curvilinear $n$-gons: if $\Om$ and $\Om'$ are Steklov isospectral admissible curvilinear $n$-gons and if all of the interior angles of $\Om$ are rational multiplies of $\pi$, then the same is true for all the angles of $\Om'$.   Indeed, Theorem~\ref{th:ss_ngons} tells us that, up to reordering, $|c(\alpha_j)|=|c(\alpha'_j)|$ for $j=1,\dots, n$.   This implies that $\frac{\pi^2}{2\alpha'_j}=k\pi \pm \frac{\pi^2}{2\alpha_j}
$ for some $k\in \Z$.  Writing $\alpha_j=q_j\pi$, we then have $\alpha'_j=\frac{q_j\pi}{2kq_j\pm 1}.$
\end{remark}

To conclude the background on curvilinear $n$-gons, we summarize the properties of $|c(\alpha)|$ that will be used extensively. 
\begin{lemma}\label{lem: |c|}~
Define $|c|:(0,\pi)\to [0,1]$ by $|c|(\alpha):=|c(\alpha)|$ where $c(\alpha)=\cos\left(\frac{\pi^2}{2\alpha}\right)$ as in Definition~\ref{def: char poly}.  Then:
\begin{enumerate}
\item[(a)]  $|c|^{-1}(\{0\})$ consists of all odd angles $\frac{\pi}{2k+1}$, $k\in \Z^+$.
\item[(b)] $|c|^{-1}(\{1\})$ consists of all even angles $\frac{\pi}{2k}$, $k\in \Z^+$.
\item[(c)] $|c|$ maps each interval $(\frac{\pi}{m+1},\frac \pi m)$, $m\in \Z^+$, bijectively onto $(0,1)$.     In particular, the restriction of $|c|$ to the set of all obtuse angles is injective.
\item[(d)] For $s\in [0,1]$, the inverse image $|c|^{-1}(\{s\})$ is discrete and accumulates only at $0$.
\end{enumerate}
\end{lemma}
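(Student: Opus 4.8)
The plan is to reduce all four statements to elementary facts about $|\cos|$ through the substitution $u=\frac{\pi^2}{2\alpha}$. As $\alpha$ ranges over $(0,\pi)$, the map $\alpha\mapsto u$ is a strictly decreasing smooth bijection onto $\left(\frac{\pi}{2},\infty\right)$, with $\alpha\to 0^+$ corresponding to $u\to\infty$, and by definition $|c|(\alpha)=|\cos u|$. Parts (a) and (b) then fall out immediately: $\cos u=0$ exactly when $u=\frac{(2k+1)\pi}{2}$ and $|\cos u|=1$ exactly when $u=k\pi$; intersecting with $u>\frac{\pi}{2}$ and inverting the substitution gives $\alpha=\frac{\pi}{2k+1}$ and $\alpha=\frac{\pi}{2k}$ respectively, with $k\in\Z^+$ in both cases (the value $k=0$ is excluded, as it corresponds to the endpoint $\alpha=\pi$).

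The core of the argument is (c). I would observe that $\alpha\mapsto u$ carries the interval $\left(\frac{\pi}{m+1},\frac{\pi}{m}\right)$ onto the half-period interval $I_m:=\left(\frac{m\pi}{2},\frac{(m+1)\pi}{2}\right)$ whose endpoints are two consecutive multiples of $\frac{\pi}{2}$. Since the zeros and critical points of $\cos$ occur precisely at multiples of $\frac{\pi}{2}$, the function $|\cos u|$ has no interior zero and no interior critical point on $I_m$; hence it is continuous and strictly monotone there, and evaluation at the endpoints (whose values are $0$ and $1$, in an order governed by the parity of $m$) shows that it maps $I_m$ bijectively and monotonically onto $(0,1)$. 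Composing with the decreasing bijection $\alpha\mapsto u$ yields the claimed bijection of $\left(\frac{\pi}{m+1},\frac{\pi}{m}\right)$ onto $(0,1)$. The ``in particular'' clause is then simply the case $m=1$, since the obtuse angles form exactly the interval $\left(\frac{\pi}{2},\pi\right)=\left(\frac{\pi}{1+1},\frac{\pi}{1}\right)$, on which $|c|$ is a bijection and hence injective.

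Finally, (d) follows by assembling (a)--(c). For $s\in(0,1)$, part (c) gives exactly one preimage $\alpha_m$ in each interval $\left(\frac{\pi}{m+1},\frac{\pi}{m}\right)$, while the endpoints $\frac{\pi}{m}$ contribute only the values $0$ and $1$; for $s=0$ and $s=1$, parts (a) and (b) give the explicit sequences $\frac{\pi}{2k+1}$ and $\frac{\pi}{2k}$. In every case $|c|^{-1}(\{s\})$ meets each of the pairwise disjoint intervals $\left(\frac{\pi}{m+1},\frac{\pi}{m}\right)$ in at most one point, and these intervals shrink to $0$; this makes the preimage discrete in $(0,\pi)$ with $0$ as its only possible accumulation point. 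I expect the one genuine point of care to be the bookkeeping in (c) — matching each $\alpha$-interval to the correct half-period $I_m$ and tracking which endpoint produces $0$ versus $1$ — but once $|\cos u|$ is pictured as a succession of monotone arches between consecutive multiples of $\frac{\pi}{2}$, every assertion becomes transparent.
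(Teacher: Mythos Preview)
Your argument is correct: the substitution $u=\frac{\pi^2}{2\alpha}$ reduces everything to the well-known behavior of $|\cos u|$ on half-period intervals, and you handle the bookkeeping (endpoints, parity of $m$, the obtuse case $m=1$, and the accumulation at $0$) accurately. Note that the paper does not actually supply a proof of this lemma---it is stated as a summary of elementary properties and left to the reader---so your write-up gives a complete justification where the paper had none; your approach is exactly the natural one and there is nothing to compare against.
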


\subsection{Convex polygons}
We now specialize to the case of convex polygons $\Om$; i.e., in addition to assuming that all angles lie in $(0, \pi)$, we assume all edges of $\Om$ are straight line segments.  The only convex $n$-gon that has three odd angles is the equilateral triangle; all others have at most two odd angles since the angles must sum to $(n-2)\pi$.  For the same reason, with the exception of rectangles, a convex $n$-gon can have  at most three even angles. In particular, an admissible convex $n$-gon $\Om$ can have at most three exceptional components.  Consequently, any two exceptional components are adjacent, so we can view any ordering $\y_1,\dots, \y_k$ of the exceptional components as a cyclic ordering.

\begin{notaconv} A choice of orientation of $\pa\Om$ induces compatible orientations of each exceptional boundary component.   Moreover, the orientation yields a cyclic ordering $\y_1,\dots,\y_k$ of the boundary components, unique up to the choice $\y_1$.   In what follows, we will always assume that orientations and cyclic ordering of the exceptional boundary components are simultaneously compatible with some orientation of $\pa\Om$.   
\end{notaconv}

Thus part (c) of Theorem~\ref{th:ss_ngons} takes on the following simpler form:

\begin{cor}\label{cor:excep} Suppose that $\Omega$ and $\Omega'$ are admissible convex $n$-gons that have the same characteristic polynomial and that have $k>0$ even angles.   Then there exist orientations of $\pa\Om$ and $\pa\Om'$ and cyclic orderings $\y_1,\dots,\y_k$ and $\y'_1,\dots\y'_k$ of the exceptional components compatible with the orientations of $\pa \Omega$ and $\pa\Omega'$, respectively, so that
for each $j\in \{1,\dots, k\}$, we have either
\[\bell(\y_j)=\bell(\y'_j)\,\,\mbox{and}\,\,\pmb{C}(\y_j)=\pm\pmb{C}(\y'_j)\]
or else
\[\bell(\y_j)=\bell(-\y'_j)\,\,\mbox{and}\,\,\pmb{C}(\y_j)=\pm\pmb{C}(-\y'_j)\]
for some choice of $\pm$. 

\end{cor}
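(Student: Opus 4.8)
The plan is to obtain the corollary as a direct refinement of part (c) of Theorem~\ref{th:ss_ngons}: that theorem already supplies a one-to-one correspondence between the exceptional components of $\Om$ and $\Om'$ matching the data $\bell$ and $\pmb{C}$ (up to sign and up to reversing a component), and its \emph{only} shortcoming is that this correspondence need not respect the cyclic order of the components. The whole task is therefore to show that in the convex case one can re-choose the boundary orientations and initial components so that the correspondence becomes order-preserving, i.e.\ sends the $j$th component of $\Om$ to the $j$th component of $\Om'$.

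First I would fix an orientation of $\pa\Om$ and a compatible cyclic labeling $\y_1,\dots,\y_k$, and likewise any initial compatible labeling $\y'_1,\dots,\y'_k$ of $\Om'$. By Theorem~\ref{th:ss_ngons}(a) the two polygons have the same number $k$ of even angles, and, as observed in the discussion preceding the statement, admissibility rules out rectangles, so an admissible convex $n$-gon has at most three exceptional components; hence $k\le 3$. Theorem~\ref{th:ss_ngons}(c) then produces a permutation $\sigma\in S_k$ such that, for each $j$, the component $\y_j$ is matched with $\y'_{\sigma(j)}$ in the sense that either $\bell(\y_j)=\bell(\y'_{\sigma(j)})$ and $\pmb{C}(\y_j)=\pm\pmb{C}(\y'_{\sigma(j)})$, or the same two relations hold with $-\y'_{\sigma(j)}$ in place of $\y'_{\sigma(j)}$.

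The heart of the argument is the combinatorial observation that, for $\Om'$, the cyclic labelings compatible with \emph{some} orientation of $\pa\Om'$ are exactly those obtained from $\y'_1,\dots,\y'_k$ by a cyclic rotation (a new choice of initial component, same orientation) together with, optionally, a reversal of orientation (which reverses the cyclic order and at the same time replaces each $\y'_j$ by its inverse $-\y'_j$). For $k=1$ there is nothing to do; for $k=2$ the two choices of initial component already realize both permutations of the two components; and for $k=3$ the six labelings arising from the two orientations and three choices of initial component realize all $3!=6$ permutations of the three components. In every case $\sigma$ is realized by such a relabeling, and I would simply apply it to $\Om'$.

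The one point that needs checking is that reversing the orientation of $\pa\Om'$ does not destroy the matchings, and here I would note that the matching relation in Theorem~\ref{th:ss_ngons}(c) is a disjunction that is symmetric under $\y'\mapsto-\y'$: reversing a component merely interchanges its two clauses. Consequently replacing every $\y'_j$ by $-\y'_j$ preserves all the matchings already in force and changes only the labeling and cyclic order. After the rotation and (if $\sigma$ is realized using a reflection) reversal dictated by $\sigma$, the resulting compatible cyclic ordering of $\Om'$ matches $\y_j$ with $\y'_j$ for every $j$, which is exactly the assertion of the corollary. The only real obstacle is the combinatorial realization of $\sigma$, and it is overcome precisely because the bound $k\le 3$ forces the dihedral group of compatible relabelings to exhaust all of $S_k$.
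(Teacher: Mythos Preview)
Your argument is correct and follows essentially the same route as the paper: the paper's proof is the single sentence ``every ordering of the exceptional components is cyclic and compatible with some orientation of the boundary,'' which is precisely your combinatorial observation that for $k\le 3$ the dihedral group of compatible relabelings exhausts $S_k$. Your treatment is simply a more explicit unpacking of that sentence, including the (correct) check that orientation reversal, which replaces each $\y'_j$ by $-\y'_j$, merely swaps the two clauses of the matching disjunction and so preserves the conclusion.
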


The corollary is immediate from Theorem~\ref{th:ss_ngons} since every ordering of the exceptional components is cyclic and compatible with some orientation of the boundary.

\section{Eigenvalue bounds and applications to Steklov isospectrality} \label{s:ebounds_app}

In this section, we demonstrate a collection of estimates for the Steklov eigenvalues.   In Subsection~\ref{subsec:passage}, we develop the tools needed for the rest of the section.  In particular, we extend work of Girouard and Polterovich \cite{zhir_pol} addressing Steklov eigenvalue bounds for Riemannian surfaces containing long thin passages.  In Subsection~\ref{subsec:triangle bounds}, we obtain bounds for the perimeter-normalized Steklov eigenvalues of arbitrary triangles in terms of the smallest vertex angle.  Turning to $n$-gons with $n\geq 4$ in Subsection~\ref{subsec:n-gons}, we first obtain Steklov eigenvalue bounds for long, thin $n$-gons, \emph{convex or not}.  Then as a consequence, we obtain bounds for the perimeter-normalized Steklov eigenvalues of \emph{convex} $n$-gons $\Omega$ in terms of the smallest vertex angle $\alpha(\Omega)$, provided $\alpha(\Omega)$ is sufficiently small.

\subsection{Riemannian surfaces containing long thin passages}\label{subsec:passage}

Steklov eigenvalues satisfy a certain variational principle, also known as a min-max principle, which allows one to obtain eigenvalue estimates by choosing specific test functions. This variational principle can be shown in a very general context (see \cite{bir_sol_70}), but the following formulation will suffice for our purposes.

\begin{prop} \label{prop:rayleigh} Let $\Omega$ be a compact Riemannian manifold with boundary. For $u\in H^1(\Omega)$, the Rayleigh quotient for the Steklov problem is defined by
\[ R(u)=\frac{\int_\Omega\,|\nabla u|^2 dA}{\int_{\partial\Omega}\,u^2 ds}. \] 
Here, $dA$ denotes the Riemannian volume form on $\Omega$, and $ds$ the induced Riemannian measure on the boundary. Let $\mathcal E_k(\Omega)$ denote the set of all $k$-dimensional subspaces of $H^1(\Omega)$ that consist of functions whose restrictions to $\partial\Omega$ are orthogonal to constants relative to the $L^2(\partial \Omega)$ inner product.  Then, the Steklov eigenvalues satisfy 
\beq \sigma_k(\Omega)=\min_{E\in \mathcal E_k(\Omega)}\,\max_{0\neq u\in E}\, R(u). \label{eq:rayleigh_vp} \eeq  
\end{prop}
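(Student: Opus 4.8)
The plan is to reduce the asserted variational principle to the classical Courant--Fischer min-max characterization of the eigenvalues of the Dirichlet-to-Neumann operator, using the Dirichlet principle as the bridge between the Rayleigh quotient $R$ on $H^1(\Omega)$ and the quadratic form of that operator. The discreteness of the Steklov spectrum and the ordering \eqref{eq:def_stek_ev} are already recorded in the introduction, so I would take as known that the Dirichlet-to-Neumann operator $\mathcal{D}$ is self-adjoint and non-negative on $L^2(\partial\Omega)$ with an $L^2(\partial\Omega)$-orthonormal eigenbasis $\phi_0\equiv\mathrm{const},\phi_1,\phi_2,\dots$ corresponding to $0=\sigma_0\le\sigma_1\le\cdots$, and concentrate on the variational statement itself.

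First I would recall the weak formulation: $u\in H^1(\Omega)$ solves \eqref{eq:steklov} with eigenvalue $\sigma$ exactly when $\int_\Omega \nabla u\cdot\nabla v\,dA=\sigma\int_{\partial\Omega}uv\,ds$ for all $v\in H^1(\Omega)$. Writing $Hf$ for the harmonic extension of a boundary datum $f$, Green's identity applied to the harmonic function $Hf$ yields the basic identity $\langle \mathcal{D}f,f\rangle_{L^2(\partial\Omega)}=\int_\Omega|\nabla(Hf)|^2\,dA$. The key analytic input is then the Dirichlet principle: among all $u\in H^1(\Omega)$ with prescribed trace $u|_{\partial\Omega}=f$, the Dirichlet energy $\int_\Omega|\nabla u|^2\,dA$ is minimized by $Hf$. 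Combining these, for every $u$ with trace $f$ one has $R(u)\ge R(Hf)=\langle\mathcal{D}f,f\rangle/\|f\|^2_{L^2(\partial\Omega)}$, with equality at $u=Hf$; this is exactly what couples the min-max over $H^1(\Omega)$ to the min-max over boundary data.

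With this identity in hand, both inequalities in \eqref{eq:rayleigh_vp} follow. For the upper bound I would test with $E=\mathrm{span}\{H\phi_1,\dots,H\phi_k\}$: its traces span $\mathrm{span}\{\phi_1,\dots,\phi_k\}$, which is orthogonal to constants, so $E\in\mathcal{E}_k$, and since every $u\in E$ equals $Hf$ for some $f$ in this eigenspace span, $\max_{u\in E}R(u)=\sigma_k$. For the lower bound, given any $E\in\mathcal{E}_k$, I consider the trace map $T\colon E\to L^2(\partial\Omega)$, whose image lies in the orthogonal complement of the constants. If $T$ fails to be injective, some nonzero $u\in E$ has vanishing boundary integral, forcing $R(u)=+\infty$ and making the inequality trivial; if $T$ is injective, then $V=T(E)$ is a $k$-dimensional subspace of $\{\mathrm{const}\}^\perp$, and the Dirichlet principle together with Courant--Fischer applied to $\mathcal{D}$ restricted to $\{\mathrm{const}\}^\perp$ (whose $k$-th eigenvalue is $\sigma_k$) gives $\max_{u\in E}R(u)\ge\max_{f\in V}\langle\mathcal{D}f,f\rangle/\|f\|^2\ge\sigma_k$. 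Taking the minimum over $E$ yields the reverse inequality, so equality holds and the minimum is attained.

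I expect the main obstacle to be not any single inequality but the functional-analytic groundwork underpinning the reduction: justifying that $\mathcal{D}$ has discrete spectrum with a complete $L^2(\partial\Omega)$-orthonormal eigenbasis, that the trace map $H^1(\Omega)\to L^2(\partial\Omega)$ is well defined, bounded, and compact, and that harmonic extensions exist in $H^1(\Omega)$. The compactness of the trace embedding, which requires some control on the boundary regularity, is the crux; on a smooth compact Riemannian manifold with boundary this is classical, and for the polygonal and Lipschitz domains of interest here it can be invoked from \cite{bir_sol_70}. I would therefore state these spectral-theoretic facts up front (with the appropriate citation) and present the Dirichlet-principle argument above as the substance of the proof.
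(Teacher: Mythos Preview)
The paper does not actually prove this proposition: it is stated as a known variational principle, with the sentence ``This variational principle can be shown in a very general context (see \cite{bir_sol_70})'' serving in lieu of a proof. So there is no argument in the paper to compare against.

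Your proposed derivation is the standard and correct one. The reduction via the Dirichlet principle---$R(u)\ge R(Hf)$ with equality at the harmonic extension---together with Courant--Fischer for the Dirichlet-to-Neumann operator on $\{\mathrm{const}\}^\perp$ gives both inequalities cleanly; the case split on injectivity of the trace map handles the degenerate situation where some nonzero $u\in E$ has vanishing trace. One small point worth making explicit: when you test with $E=\mathrm{span}\{H\phi_1,\dots,H\phi_k\}$, you should note that these functions are linearly independent in $H^1(\Omega)$ (which follows since their traces are), so that $E$ really is $k$-dimensional. Your identification of the functional-analytic prerequisites (compact trace embedding, existence of harmonic extensions, discrete spectrum of $\mathcal D$) as the genuine content, to be cited rather than reproved, matches exactly how the paper treats the matter.
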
   

Girouard and Polterovich \cite[\S 2]{zhir_pol} gave Steklov eigenvalue bounds for compact Riemannian manifolds of arbitrary dimension that contain a long thin cylindrical passage.    We state their result only in dimension two and then, still in the 2-dimensional case, we give two extensions, the first in Proposition~\ref{prop:trunc sector} and the second in Proposition~\ref{prop:quadrilat}.

\begin{lemma}\label{lemma:Gir Polt}\cite[\S 2]{zhir_pol} Let $\Omega$ be a compact Riemannian surface with Lipschitz boundary that contains a Euclidean rectangle of length $\ell$ and width $w$.   Assume that the two sides of length $\ell$ lie in $\partial \Omega$.   Then the $k^{th}$ Steklov eigenvalue of $\Omega$ satisfies  
\beq \sigma_k(\Omega) \leq  \frac{2\pi^2 k^2 w}{\ell^2}. \label{eq:lemmaGirPolt} \eeq 
\end{lemma}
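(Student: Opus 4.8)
The plan is to apply the variational principle of Proposition~\ref{prop:rayleigh} directly, exhibiting a single well-chosen $k$-dimensional subspace $E \in \mathcal E_k(\Omega)$ on which every Rayleigh quotient is bounded by $\frac{2\pi^2 k^2 w}{\ell^2}$. Since $\sigma_k(\Omega)$ is the minimum of $\max_{0\neq u\in E} R(u)$ over all such subspaces by \eqref{eq:rayleigh_vp}, exhibiting one competitor immediately yields the claimed inequality. Concretely, I would place Euclidean coordinates $(x,y)$ on the rectangle so that it is identified with $[0,\ell]\times[0,w]$, with the two long sides $\{y=0\}$ and $\{y=w\}$ lying in $\partial\Omega$. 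All test functions will be supported entirely on this rectangle and extended by zero to the rest of $\Omega$.

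For $i=1,\dots,k$, I would set $v_i(x,y)=\sin\!\left(\frac{2\pi i x}{\ell}\right)$ on the rectangle. Two features of this choice are essential. First, each $v_i$ vanishes at $x=0$ and $x=\ell$, hence on the two short sides of the rectangle; since those sides are the only part of the rectangle's boundary that may lie in the interior of $\Omega$, the zero-extension of $v_i$ belongs to $H^1(\Omega)$. Second, each $v_i$ has vanishing mean over $\partial\Omega$, because the relevant boundary integral reduces to $2\int_0^\ell \sin(2\pi i x/\ell)\,dx = 0$; thus every function in $E:=\operatorname{span}\{v_1,\dots,v_k\}$ restricts to a function orthogonal to constants on $\partial\Omega$. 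As the frequencies are distinct, $\dim E = k$, so $E\in\mathcal E_k(\Omega)$ is an admissible competitor.

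It then remains to estimate $R(u)$ for a general $u=\sum_{i=1}^k a_i v_i$. Since each $v_i$ depends only on $x$, the gradient is purely horizontal, and the orthogonality relations $\int_0^\ell \cos(2\pi i x/\ell)\cos(2\pi i' x/\ell)\,dx=\frac{\ell}{2}\delta_{ii'}$ and $\int_0^\ell \sin(2\pi i x/\ell)\sin(2\pi i' x/\ell)\,dx=\frac{\ell}{2}\delta_{ii'}$ diagonalize both integrals. I expect the Dirichlet energy over the rectangle to reduce to $\frac{2\pi^2 w}{\ell}\sum_i i^2 a_i^2$, while the $L^2$-norm over the two long sides (the only part of $\partial\Omega$ where $u$ is nonzero) reduces to $\ell\sum_i a_i^2$. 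Hence $R(u)=\frac{2\pi^2 w}{\ell^2}\cdot\frac{\sum_i i^2 a_i^2}{\sum_i a_i^2}\le \frac{2\pi^2 k^2 w}{\ell^2}$, using $i\le k$; taking the maximum over $u\in E$ and invoking \eqref{eq:rayleigh_vp} finishes the proof.

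The one genuinely delicate point---and the reason the constant is $2\pi^2$ rather than the $\frac{\pi^2}{2}$ that the naive frequencies $\pi i/\ell$ would produce---is the interplay between two constraints on the test functions: they must vanish on the short sides (forcing sines rather than cosines) and must simultaneously have zero boundary mean (ruling out the odd harmonics $\sin(\pi i x/\ell)$ with $i$ odd, whose integral over $[0,\ell]$ is nonzero). Restricting to the even harmonics $\sin(2\pi i x/\ell)$ resolves both at once, but doubles the effective frequency, which accounts for the factor of four. I expect verifying the $H^1$-extension across the interior short sides and the orthogonality-to-constants condition to be the main conceptual content; the trigonometric integrals themselves are routine.
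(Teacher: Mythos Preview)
Your proposal is correct and follows essentially the same approach as the paper: you use the identical test functions $v_i(x,y)=\sin(2\pi i x/\ell)$ supported on the rectangle, compute the same Dirichlet energy and boundary integrals via orthogonality, and obtain the same Rayleigh-quotient bound. Your additional discussion of why the frequencies $2\pi i/\ell$ (rather than $\pi i/\ell$) are needed is a helpful clarification not spelled out in the paper, but the argument itself is the same.
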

We note that there is no additional hypothesis on the sides of length $w$; they may or may not lie in $\partial \Omega$.

\begin{proof}
One uses the variational characterization of eigenvalues in Proposition~\ref{prop:rayleigh}.   
Without loss of generality, we may assume that the rectangle contained in $\Omega$ is located at $[0, \ell] \times [0, w]$ in the $xy$-plane.
We define test functions on the rectangle via 
\beq u_j(x,y) =  \sin\left(\frac{2\pi j x}{\ell} \right), \quad 0 \leq x \leq \ell, \quad 0 \leq y \leq w, \label{eq:ujR} \eeq
and extend $u_j\equiv 0$ outside the rectangle.   Then $E_k:=\textrm{span}\{u_1,\dots, u_k\}\in \cE_k(\Omega)$ with $\cE_k$ as in Proposition \ref{prop:rayleigh}.  We have
\beq \int_\Omega |\nabla u_j|^2 dA &=& \frac{4\pi^2 j^2}{\ell^2} \frac{w \ell}{2}.
\nonumber \eeq 
Moreover, $\nabla u_j$ is orthogonal to $\nabla u_m$ in $L^2(\Omega)$ for $j \neq m$.  We compute 
\[ \int_{\pa \Omega} u_j \, u_m \, ds = \begin{cases} 0, & j \neq m, \\ \ell, & j = m. \end{cases} \]
Therefore, for every real linear combination $u=a_1 u_1 + \ldots + a_k u_k \in E_k$, we have
\[
R(u) \leq \frac{2\pi^2 w}{\ell^2} \frac{\sum_{j=1} ^k j^2 a_j^2}{\sum_{j=1} ^k a_j^2} \leq \frac{2\pi^2k^2 w}{\ell^2},
\]
which implies that 
\[ \sigma_k(\Omega) \leq \frac{2\pi^2k^2 w}{\ell^2}. \] 
\end{proof}

  The actual eigenvalue bound in the lemma above is not explicitly stated in \cite{zhir_pol} but the test functions are given there.    The lemma does not require that $\ell \gg w$ but the bounds are much stronger  in that case.

\begin{defn} \label{def:sector} Recall that a \emph{polar rectangle} is a sector either of a Euclidean disk or of a Euclidean annulus.  If $r_1$ and $r_2$ are the inner and outer radii (so $r_1=0$ in the case of a disk sector), we will refer to $L:=r_2-r_1$ as the \emph{radial side length}.   
\end{defn}

In the next proposition, we show how to use these polar rectangles to obtain estimates in the spirit of Lemma \ref{lemma:Gir Polt}.  
\begin{prop}\label{prop:trunc sector}~
Let $\Omega$ be a compact Riemannian surface with Lipschitz boundary that contains a closed subdomain $S$ isometric to a polar rectangle of radial side length $L$ and opening angle $\alpha$.  Let $0\leq r_1 < r_2$ be the inner and outer radii (thus $L=r_2-r_1$) and let $s_1$ and $s_2$ be the arclengths of the inner and outer circular boundary arcs. Suppose that the two radial boundary edges of $S$ lie in $ S\cap \partial\Omega$.  (We make no assumption on whether the inner and outer circular edges lie in $\partial\Omega$.)  Then for all $k=1,2,\dots$, the Steklov eigenvalues of $\Omega$ satisfy 
\begin{equation}\label{eq:upper boundv2}\sigma_k(\Omega) \leq \alpha \frac{k^2 \pi^2}{L} \left[ 1 + \frac{2 r_1}{L}\right]=k^2\pi^2\left(\frac{s_1+s_2}{L^2}\right).\end{equation}
In particular, if $r_1=0$ (i.e., $S$ is isometric to a sector of a disk), then 
\begin{equation}\label{eq:upper bound}\sigma_k(\Omega)\leq \pi^2\frac{k^2\alpha}{L}.\end{equation}
\end{prop}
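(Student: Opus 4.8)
The plan is to adapt the proof of Lemma~\ref{lemma:Gir Polt} to the polar setting, using the variational principle of Proposition~\ref{prop:rayleigh} with test functions that oscillate along the radial direction. I would introduce polar coordinates $(r,\theta)$ on $S$ so that $r_1\le r\le r_2$ and $0\le\theta\le\alpha$, with the two radial edges located at $\theta=0$ and $\theta=\alpha$; by hypothesis these lie in $\partial\Omega$, while the circular arcs $r=r_1$ and $r=r_2$ may or may not. Since the radial side length $L=r_2-r_1$ now plays the role of the length $\ell$ in the rectangular case, I would set
\[
u_j(r,\theta)=\sin\!\left(\frac{2\pi j\,(r-r_1)}{L}\right),\qquad r_1\le r\le r_2,\ \ 0\le\theta\le\alpha,
\]
and extend $u_j\equiv 0$ outside $S$. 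The crucial structural point is that $u_j$ vanishes on both circular arcs ($r=r_1$ and $r=r_2$), so the zero-extension lies in $H^1(\Omega)$ regardless of whether those arcs are interior to $\Omega$ or part of $\partial\Omega$, and the boundary restriction of $u_j$ is supported on the two radial edges.

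Next I would compute the three ingredients entering the Rayleigh quotient. Because $u_j$ depends only on $r$, in polar coordinates $|\nabla u_j|^2=(\partial_r u_j)^2$ while the area element is $dA=r\,dr\,d\theta$, so the Dirichlet energy involves the radially weighted integral $\int_{r_1}^{r_2}\cos^2\!\big(\tfrac{2\pi j(r-r_1)}{L}\big)\,r\,dr$. This is exactly where the argument departs from the flat rectangle, and I expect it to be the only genuine obstacle: the weight $r$ threatens to spoil the orthogonality relations that make the estimate clean. The identity that rescues everything is $\int_0^L \rho\cos\!\big(\tfrac{2\pi n\rho}{L}\big)\,d\rho=0$ for every nonzero integer $n$, from which the weighted $\cos^2$ integral collapses to $\tfrac{L}{2}\,\bar r$ with $\bar r=r_1+\tfrac{L}{2}$ the mean radius, giving $\int_S|\nabla u_j|^2\,dA=\alpha\pi^2 j^2 (L+2r_1)/L$. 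The same cancellation shows $\int_S\nabla u_j\cdot\nabla u_m\,dA=0$ for $j\neq m$, so the radial weight does \emph{not} break gradient orthogonality. For the boundary, the circular arcs contribute nothing since $u_j=0$ there, the arclength along each radial edge is $ds=dr$, and the usual orthogonality of $\{\sin(2\pi j\rho/L)\}$ on $[0,L]$ yields $\int_{\partial\Omega}u_j\,u_m\,ds=L\,\delta_{jm}$; the same orthogonality against the constant shows the restrictions are $L^2(\partial\Omega)$-orthogonal to constants, so $E_k:=\mathrm{span}\{u_1,\dots,u_k\}\in\mathcal E_k(\Omega)$.

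Finally I would assemble the estimate exactly as in Lemma~\ref{lemma:Gir Polt}. By the two orthogonality relations, for $u=\sum_{j=1}^k a_j u_j$ the Rayleigh quotient diagonalizes to $R(u)=\tfrac{\alpha\pi^2(L+2r_1)}{L^2}\cdot\tfrac{\sum_j j^2 a_j^2}{\sum_j a_j^2}$, which is at most $\tfrac{k^2\pi^2\alpha(L+2r_1)}{L^2}$ since each $j\le k$. Proposition~\ref{prop:rayleigh} then gives $\sigma_k(\Omega)\le \max_{0\neq u\in E_k}R(u)\le \tfrac{k^2\pi^2\alpha(L+2r_1)}{L^2}$, which is the first bound in \eqref{eq:upper boundv2} after writing $\tfrac{L+2r_1}{L}=1+\tfrac{2r_1}{L}$. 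Since $s_1=r_1\alpha$ and $s_2=r_2\alpha=(r_1+L)\alpha$, we have $(L+2r_1)\alpha=s_1+s_2$, which yields the stated reformulation $k^2\pi^2(s_1+s_2)/L^2$; setting $r_1=0$ recovers \eqref{eq:upper bound}.
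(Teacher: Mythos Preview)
Your proof is correct and follows essentially the same approach as the paper's own proof: the same radial test functions $u_j(r,\theta)=\sin\bigl(2\pi j(r-r_1)/L\bigr)$ extended by zero, the same Dirichlet energy and boundary computations, and the same Rayleigh-quotient assembly. In fact you make explicit two points the paper leaves implicit, namely the $H^1$-regularity of the zero extension (via vanishing on the circular arcs) and the identity $\int_0^L \rho\cos(2\pi n\rho/L)\,d\rho=0$ that preserves gradient orthogonality despite the radial weight.
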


\begin{proof}
   The second statement follows from the first since $s_1=0$ and $s_2=\alpha L$ when $r_1=0$.  To prove the first, we again apply the variational principle \eqref{eq:rayleigh_vp}.  We assume without loss of generality that the polar rectangle is described by $r_1 < r < r_2$ and $0<\theta < \alpha$.  Using these polar coordinates $(r, \theta)$ on $S$, define functions $u_j$ on $S$ by 
\beq  u_j(r,\theta) =  \sin\left(\frac{2\pi j\, (r-r_1)}{L}\right) \label{eq:uj}, \eeq
and extend $u_j$ to $\Omega$ by setting $u_j\equiv 0$ on $\Omega\setminus S$.   
We have $E_k=\textrm{span}\{u_1,\dots, u_k\}$ $\subset \cE_k$ with $\cE_k$ as in Proposition \ref{prop:rayleigh}; the functions $u_j$ satisfy 
\begin{equation}\label{eq:gradint1} \int_\Omega |\nabla u_j|^2 dA = \alpha \frac{4\pi^2 j^2}{L^2} \int_{r_1}^{r_2}  |\cos(2\pi j (r-r_1)/L)|^2 r dr = \alpha \pi^2 j^2 \left[ 1+ \frac{2r_1}{L} \right].
\nonumber\end{equation}
We compute that $\nabla u_j$ is orthogonal to $\nabla u_m$ in $L^2(\Omega)$ for $j \neq m$ and
\[ \int_{\pa \Omega} u_j \, u_m \, ds = 2 \int_{r_1} ^{r_2} \sin(2\pi j (r-r_1)/L) \sin(2 \pi m (r-r_1)/L) dr = \begin{cases} 0, & j \neq m, \\ L, & j = m. \end{cases} \]
Therefore, for every real linear combination $u=a_1 u_1 + \ldots + a_k u_k \in E_k$, we have
\[
R(u) \leq \alpha \pi^2 \left[ 1+ \frac{2r_1}{L} \right] \frac{\sum_{j=1} ^k j^2 a_j^2}{L \sum_{j=1} ^k a_j^2} \leq \alpha \frac{k^2 \pi^2}{L} \left[ 1 + \frac{2 r_1}{L} \right], \]
giving the desired upper bound.  

The final equality follows from the facts that $L=r_2-r_1$ and that $s_i=r_i\alpha$ for $i=1,2$.
\end{proof}

We build upon Lemma \ref{lemma:Gir Polt} and Proposition \ref{prop:trunc sector} to obtain eigenvalue estimates for Riemannian surfaces that contain either a long and narrow quadrilateral or a long and narrow triangle.  

\begin{prop}\label{prop:quadrilat} Let $\Omega$ be a compact Riemannian surface with Lipschitz boundary.
\begin{enumerate}[(a)] 
\item Suppose that $\Omega$ contains a long, thin Euclidean quadrilateral $Q$ with vertices in cyclic order given by $p_1, q_1, q_2, p_2$.    More precisely, writing $$w:= \max\{|p_1p_2|, |q_1q_2|\}$$ and $$\ell:= \min\{|p_1q_1|,|p_2q_2|\},$$ suppose that  $$\ell>3w$$
as in Figure~\ref{fig:quadrilat}.  Assume that the two long sides $p_1q_1$ and $p_2q_2$ lie in $\partial \Omega$.   Then the $k^{th}$ Steklov eigenvalue of $\Omega$ satisfies
$$\sigma_k(\Omega)\leq 2k^2 \pi^3\frac{w}{(\ell-3w)^2}.$$

\item Suppose that $\Omega$ contains a Euclidean triangle $T$ with vertices $p,q_1,q_2$, such that the sides $pq_1$ and $pq_2$ lie in $\partial\Omega$ and that
$$w:=|q_1q_2|< \frac{\ell}{2}<\ell = \min\{|pq_1|,|pq_2|\}.$$
Then the $k^{th}$ Steklov eigenvalue of $\Omega$ satisfies 
$$\sigma_k(\Omega)\leq k^2 \pi^3\frac{w}{(\ell-2w)^2}.$$
\end{enumerate}
\end{prop}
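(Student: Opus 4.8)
The plan is to reduce both parts to the inscribed-polar-rectangle estimate of Proposition~\ref{prop:trunc sector} (the disk-sector case for (b) and the annular-sector case for (a)), supplemented by Lemma~\ref{lemma:Gir Polt} to handle one degenerate configuration in (a). In each case the two long sides of the inscribed shape are taken to lie along the two edges of $Q$ (resp.\ $T$) that are assumed to lie in $\partial\Omega$, so that the hypotheses of those results are met, and the whole problem becomes one of choosing the sector as large as possible and then estimating its opening angle and radial length.

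I would treat part (b) first, as the model case. Since $pq_1,pq_2\in\partial\Omega$ emanate from the common vertex $p$, I would inscribe a disk sector centered at $p$ whose two radial edges run along $pq_1$ and $pq_2$; its opening angle is the interior angle $\alpha$ at $p$, and its radius $L$ is the largest $r$ for which the circular arc of radius $r$ stays inside $T$, i.e.\ $L=\min_{\theta}\rho(\theta)$, where $\rho(\theta)$ is the distance from $p$ to the far side $q_1q_2$ in direction $\theta$ over the wedge. Two estimates then finish the job. First, the law of cosines gives $w^2=|q_1q_2|^2=(|pq_1|-|pq_2|)^2+4|pq_1||pq_2|\sin^2(\alpha/2)\ge 4\ell^2\sin^2(\alpha/2)$, so $\sin(\alpha/2)\le w/(2\ell)$ and hence $\alpha\le \pi w/(2\ell)$, using that $\arcsin$ is convex on $[0,1]$ and so lies below the chord $x\mapsto\tfrac{\pi}{2}x$. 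Second, I would bound $L$ from below: if the foot of the perpendicular from $p$ to the line $q_1q_2$ lies in the wedge, then $L$ is that perpendicular distance, which by Pythagoras is at least $\sqrt{\ell^2-w^2}$ (the foot lies within $w$ of each $q_i$), while if the foot lies outside the wedge, then $L=\min(|pq_1|,|pq_2|)\ge\ell$; in either case $L\ge\sqrt{\ell^2-w^2}\ge(\ell-2w)^2/(2\ell)$ for $w<\ell/2$. Feeding these into the disk-sector bound $\sigma_k(\Omega)\le\pi^2 k^2\alpha/L$ yields exactly $\sigma_k(\Omega)\le\pi^3 k^2 w/(\ell-2w)^2$.

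For part (a) I would extend the two long sides $p_1q_1$ and $p_2q_2$. In the generic case these lines meet at an apex $p$, with $p_1,q_1$ on one ray and $p_2,q_2$ on the other, and I would inscribe an \emph{annular} sector: its inner radius $r_1=\max(|pp_1|,|pp_2|)$ is the smallest radius for which the inner arc clears the chord $p_1p_2$, while its outer radius $r_2$ is the largest for which the outer arc clears the chord $q_1q_2$ (equal to $\min(|pq_1|,|pq_2|)$, or to the perpendicular distance from $p$ to $q_1q_2$ when that foot lands in the wedge). The same law-of-cosines computation gives $\alpha\le\pi w/(2\ell)$ and, more precisely, $r_2\sin(\alpha/2)\le w/2$, whence $s_2=r_2\alpha\le\tfrac{\pi}{2}w$ and $s_1+s_2=\alpha(r_1+r_2)\le 2s_2\le\pi w$. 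For the radial length, using $|pq_i|=|pp_i|+|p_iq_i|\ge|pp_i|+\ell$ together with $\big||pp_1|-|pp_2|\big|\le|p_1p_2|\le w$, I would show $L=r_2-r_1\ge\ell-w$. Plugging into $\sigma_k(\Omega)\le\pi^2 k^2(s_1+s_2)/L^2$ gives $\sigma_k(\Omega)\le\pi^3 k^2 w/(\ell-w)^2$, which is stronger than the claimed $2\pi^3 k^2 w/(\ell-3w)^2$ since $\ell-w>\ell-3w>0$. The remaining degenerate configuration, in which the two long sides are parallel, has no apex; there I would instead inscribe a Euclidean rectangle of width $\le w$ (the distance between the two lines) and length $\ge\ell-w$ with its long sides in $\partial\Omega$, and apply Lemma~\ref{lemma:Gir Polt} directly.

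The step I expect to be the main obstacle is verifying that the inscribed sector genuinely lies inside $T$ (resp.\ $Q$) rather than poking out through the far chord. The delicate point is that for a thin wedge the perpendicular foot from the apex to the opposite chord typically does fall inside the angular range, so the outer radius must be taken to be that perpendicular distance rather than the vertex distance $\min(|pq_1|,|pq_2|)$; one must then check that this shorter radius still satisfies the lower bound on $L$. Establishing that the arc-to-chord clearance is governed by the endpoint and foot values of $\rho(\theta)$ — which follows from the convexity of $\theta\mapsto d/\cos(\theta-\phi)$ on the wedge — and confirming that the inscribed region does not exit through the opposite short side in non-convex configurations, are the parts requiring the most care.
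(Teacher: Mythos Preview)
Your approach is essentially the paper's: both handle the parallel case of (a) via Lemma~\ref{lemma:Gir Polt}, and otherwise extend the long sides to an apex, inscribe a polar rectangle (disk sector for (b), annular sector for (a)) with radial edges on $\partial\Omega$, and invoke Proposition~\ref{prop:trunc sector}.  Your intermediate estimates are in fact slightly sharper than the paper's---you bound the arc lengths via the law-of-cosines inequality $r_2\sin(\alpha/2)\le w/2$ (giving $s_1+s_2\le\pi w$), whereas the paper uses the arc--chord inequality on chords of length $\le 2w$ (giving $s_1+s_2\le 2\pi w$).  One small slip to fix: in the non-parallel case of (a), the displayed argument ``$|pq_i|\ge|pp_i|+\ell$ together with $\big||pp_1|-|pp_2|\big|\le w$'' only controls $\min|pq_i|-\max|pp_i|\ge\ell-w$, i.e.\ $r_2'-r_1$, not $r_2-r_1$ when the perpendicular foot lands in the wedge and forces $r_2<r_2'$; accounting for that costs one more $w$, yielding $L\ge\ell-2w$.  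This still gives $\sigma_k(\Omega)\le\pi^3k^2w/(\ell-2w)^2$, which is stronger than the claimed bound, so the argument goes through.
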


\begin{figure}[h] \centering 
 \begin{tikzpicture}
\draw   (3.8,0) --  (16.4,0);
\draw (4,0.25) -- (16,1); 
\draw (3.8,0) -- (4,0.25) ;
\draw   (16.4,0) --  (16,1);
\draw [dashed] (0,0) -- (3.8,0); 
\draw [dashed] (0,0) -- (4,0.25); 
\node at (-0.2,0) {\small $v$}; 
\node at (3.8,-0.3) {\small $p_1$}; 
\node at (16.4, -0.3) {\small $q_1$}; 
\node at (4.0, 0.5) {\small $p_2$};
\node at (16,1.3) {\small $q_2$};
\end{tikzpicture} 
\caption{A long thin quadrilateral $Q$. The extensions of the two long sides of $Q$ intersect at $v$.} \label{fig:quadrilat} 
\end{figure}
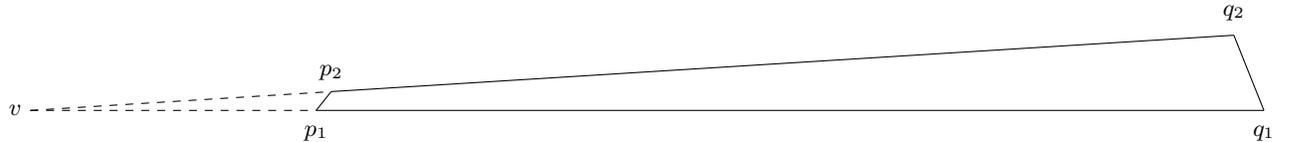 

\begin{proof}
(a) Assume first that the two long sides are parallel, that is $p_1 q_1$ is parallel to $p_2 q_2$.  Then the distance between these sides is bounded above by $w$.  We slice off a small region of $Q$ near each of the two short sides in order to obtain a rectangle of length at least $\ell-|p_1p_2|-|q_1q_2|\geq \ell-2w$ and width $\leq w$.  We then apply Lemma \ref{lemma:Gir Polt} to complete the proof in this case.   

Thus we assume that $p_1q_1$ is not parallel to $p_2q_2$.
We will construct a polar rectangle in $\Omega$ satisfying the hypotheses of Proposition~\ref{prop:trunc sector}.  Take an isometric copy of $Q$ in $\R^2$ and let $v$ be the point of intersection of the lines through $p_1q_1$ and $p_2q_2$ as in Figure~\ref{fig:quadrilat}.  We may assume for convenience that $|p_1p_2| < |q_1q_2|$.  Thus $p_i$ is the closest point to $v$ and $q_i$ the furthest point from $v$ on side $p_iq_i$ for $i=1,2$.   Let 
\[ r_1:= \max (|v p_1|, |v p_2|)  \]
and 
\[ r_2':= \min( |v q_1|, |v q_2|).\]
(We are using the notation $r_2'$ here as we will shrink it below to obtain the outer radius $r_2$ of the desired polar rectangle.)
Let $S_v(t)$ denote the circle with center $v$ and radius $t$.   
Then $S_v(r_1)$, respectively $S_v(r_2')$, intersects side $p_iq_i$ at a point $p_i'$ within distance  $|p_1p_2|$ of $p_i$, respectively a point $q_i'$ within distance $|q_1q_2|$ of $q_i$, for $i=1,2$.  (Note that $p_i'=p_i$ and $q_j'=q_j$ for at least one value of $i$ and one value of $j$ in $\{1,2\}.$)   Thus 
\begin{equation}\label{r2'-r1} r_2'-r_1\geq \ell-|p_1p_2|-|q_1q_2|> \ell-2w.\end{equation} 
  
We next shrink $r_2'$ since the polar rectangle centered at $v$ with inner radius $r_1$ and outer radius $r_2'$ may extend a little outside of $Q$ near edge $q_1q_2$.   Denote by $r_2$ the distance from $v$ to $q_1'q_2'$.  Then the polar rectangle $S$ bounded by $S_v(r_1)$, $S_v(r_2)$, $p_1q_1$ and $p_2q_2$ lies entirely inside $Q$. 

Observe that for any $t\in [r_1,r_2']$, the chord of the circle $S_v(t)$ joining points on $p_1q_1$ and $p_2q_2$ has length at most $2w$. 
In particular, the midpoint $q_m$ of the chord $q_1'q_2'$ satisfies $|q_1'q_m|\leq w$.    Thus $r_2 \geq r_2'-w$ and by Inequality~\eqref{r2'-r1}, we have 
\begin{equation}\label{r_2-r_1}L:=r_2-r_1 > \ell-3w.\end{equation}
Next, since the length $s$ of the arc of a circle subtended by a chord of length $c$ satisfies $s\leq \frac{\pi}{2} c$, the inner and outer arclengths $s_1$ and $s_2$ of $S$ satisfy 
\begin{equation}\label{s1 and s2}s_j\leq \frac{\pi}{2} (2w) =\pi w\end{equation}
for $i=1,2$.   Applying Proposition~\ref{prop:trunc sector}, we thus have 
$$\sigma_k(\Omega)\leq 2k^2 \pi^3\frac{w}{(\ell-3w)^2},$$
completing the proof.

(b) The proof follows the same steps with some minor modifications. We now set $p_1=p_2=p$, so $v=p$, and $r_1=0$.  Inequality~\eqref{r2'-r1} becomes $r_2'\geq \ell-|q_1q_2|= \ell-w$. Since $r_2\geq r_2'-w$ as before, we have $L:=r_2\geq \ell-2w$.   Finally $s_1=0$ and, as before, $s_2\leq \pi w$.   We can now apply the bound in \eqref{eq:upper bound} in Proposition~\ref{prop:trunc sector} to obtain the stated  eigenvalue bounds.
\end{proof}

\subsection{Steklov eigenvalue bounds for triangles}\label{subsec:triangle bounds}
We apply the results of the previous subsection to give bounds for the perimeter-normalized Steklov eigenvalues of triangles.  The bounds depend only on the smallest angle of the triangle.  Note the contrast with the second item in Proposition~\ref{prop:quadrilat}, which does not require that the domain itself be a triangle but imposes assumptions on the lengths of the sides of the triangular subdomain. We will prove the eigenvalue bound for triangles by using Proposition~\ref{prop:trunc sector}, independently of Proposition~\ref{prop:quadrilat}. 

\begin{prop} \label{prop:angle_estimate}
Let $T$ be a triangle and denote by $\alpha(T)$ its smallest interior angle.   Then
\[ \sigma_k(T)L(\partial T) \leq \frac{8\sqrt{3}}{3} \pi^2 k^2 \alpha(T),\]
with $L(\partial T)$ the perimeter of $T$. \end{prop}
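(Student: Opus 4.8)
The plan is to inscribe in $T$ a circular sector at the vertex carrying the smallest angle and feed it into the disk-sector case of Proposition~\ref{prop:trunc sector}. Concretely, let $A$ be the vertex with smallest angle $\alpha=\alpha(T)$, subtended by the two edges of $T$ meeting at $A$. I would place a Euclidean disk sector $S$ centered at $A$ with opening angle exactly $\alpha$, so that its two radial edges run along the two sides of $T$ at $A$ and hence lie in $\partial T$; for an appropriate radius $L$ (the radial side length, with inner radius $r_1=0$), the sector $S$ is contained in $T$. Proposition~\ref{prop:trunc sector}, in the form \eqref{eq:upper bound}, then gives $\sigma_k(T)\le \pi^2 k^2\alpha/L$ for all $k$. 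Multiplying by the perimeter $P:=L(\partial T)$ reduces the statement to the purely geometric estimate $P/L\le \frac{8\sqrt3}{3}$, i.e.\ to exhibiting an inscribed sector with $L\ge \frac{\sqrt3}{8}P$.

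Establishing this lower bound on $L$ is the heart of the matter, and I expect it to be the main obstacle, because the largest inscribable radius behaves differently according to where the foot of the altitude from $A$ falls. Order the sides and angles as $a\le b\le c$ with $\hat A\le \hat B\le \hat C$, so that $\alpha=\hat A$, the opposite side $a=BC$ is smallest, and $b,c$ are the two sides meeting at $A$. A sector of radius $r$ and opening $\alpha$ lies in $T$ exactly when $r$ does not exceed the distance from $A$ to the side $BC$ measured within the angular wedge $[0,\alpha]$. When the largest angle satisfies $\hat C\le \pi/2$, the foot of the perpendicular from $A$ lies on segment $BC$ and inside the wedge, so the maximal radius is the altitude $h_a$ from $A$; when $\hat C>\pi/2$, the foot falls outside the wedge (beyond $C$), the distance along the wedge is minimized on the ray $AC$, and the maximal radius is the adjacent side length $b$. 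The subtle point is precisely this dichotomy: for thin obtuse triangles $h_a$ is tiny relative to $P$, so one must use the side $b$ rather than the altitude there.

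With $L$ chosen as above in each case, the bound follows from two elementary facts. First, the triangle inequality with the ordering gives $c<a+b\le 2b$, hence $P=a+b+c<4b$ and $b>P/4$ for every triangle. Second, the largest angle always satisfies $\hat C\ge \pi/3$. In the obtuse case this already yields $L=b>P/4>\frac{\sqrt3}{8}P$, since $\sqrt3<2$. When $\hat C\le \pi/2$ one has $\sin\hat C\ge \sin(\pi/3)=\frac{\sqrt3}{2}$, so $L=h_a=b\sin\hat C\ge \frac{\sqrt3}{2}b>\frac{\sqrt3}{2}\cdot\frac{P}{4}=\frac{\sqrt3}{8}P$. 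In both cases $L\ge \frac{\sqrt3}{8}P$, so $P/L\le \frac{8\sqrt3}{3}$ and
\[ \sigma_k(T)\,L(\partial T)\ \le\ \pi^2 k^2\alpha\,\frac{P}{L}\ \le\ \frac{8\sqrt3}{3}\,\pi^2 k^2\,\alpha(T), \]
as claimed.

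I would close with two routine checks that I do not expect to cause difficulty: that the chosen sector genuinely fits (monotonicity of the distance-to-$BC$ function along the wedge in the obtuse case, and $h_a\le b\le c$ in the acute or right-angle case, so that the radial edges do not overrun the vertices), and that the degenerate configurations $\hat C=\pi/2$ and $\hat A=\hat B$ are consistent with the two formulas for $L$.
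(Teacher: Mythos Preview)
Your proof is correct and follows the same strategy as the paper: inscribe a disk sector at the smallest-angle vertex, invoke the $r_1=0$ case of Proposition~\ref{prop:trunc sector}, and reduce to the geometric bound $L\ge \frac{\sqrt3}{8}L(\partial T)$, which both you and the paper derive from $b>P/4$. The only difference is cosmetic: the paper avoids your acute/obtuse case split by noting that the isoceles sub-triangle with equal sides $P/4$ along the two edges at $A$ always sits inside $T$, so the sector of radius $\frac{P}{4}\cos(\alpha/2)\ge \frac{\sqrt3}{8}P$ fits uniformly.
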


\begin{proof} We will assume that $L(\partial T)=1$ and obtain the general case from the fact that if we scale a domain by a constant factor $c$, the Steklov eigenvalues scale by $c^{-1}$.       
We will show that for any triangle $T$ of perimeter one and with smallest interior angle $\alpha(T)$, there exists a universal constant $L>0$ such that the intersection of $T$ with a disk of radius $L$ centered at the vertex $\alpha(T)$ is a sector of angle $\alpha(T)$ and radius $L$.  Moreover, we will show that $L \geq \frac{\sqrt{3}}{8}$. The proposition will then follow from the bound in \eqref{eq:upper bound} in Proposition~\ref{prop:trunc sector}.

 If there is more than one vertex with angle $\alpha(T)$, choose one.  
Let $A,B,C$ be the sides of $T$ with $A\leq B\leq C$.  Then the vertex opposite side $A$ has angle $\alpha(T)$.  Since the perimeter of $T$ is one, the triangle inequality implies that $A+B> \frac{1}{2}$.  So, $\frac{1}{4} < B\leq C $.  Taking $L=\frac{1}{4}$ almost works but not quite; the curved part of the boundary of the sector could potentially leave $T$.   However, $L=\frac{1}{4}\cos(\alpha/2)$ will work; see Figure \ref{fig:var_est}.  
Observe that since $\alpha(T)$ is the smallest angle, and the angles sum to $\pi$, we have $\alpha(T)\leq \frac{\pi}{3}$, so $\cos(\alpha/2) \geq\frac{\sqrt{3}}{2}$.  Thus $L=\frac{\sqrt{3}}{8}$ works for all triangles and we can apply the bound \eqref{eq:upper bound} of Proposition~\ref{prop:trunc sector} to complete the proof.
\end{proof}

  \begin{figure} 
 \begin{tikzpicture}
\draw   (0,0) circle[radius=13.4mm];
\fill   (0,0) circle[radius=1pt];  
\draw   (0,0) --  (1.5,0);
\draw (0,0) -- (1, 1); 
\draw (1, 1) -- (1.5, 0); 
\node at (0.5,0.2) {\small $\alpha$}; 
\node at (1.4,0.7) {\small A}; 
\node at (0.35,0.68) {\small B}; 
\node at (0.8,-0.25) {\small C}; 
\end{tikzpicture} 
\caption{This triangle has side lengths $A \leq B \leq C$, and with a suitable choice of units, $A+B+C=1$.  By the triangle inequality, we therefore have $1/4 < B, C$. 
Consequently, a circular sector of radius $L \leq 1/4 \cos (\alpha/2)$ with opening angle $\alpha$ fits inside the triangle and intersects the triangle only along the straight edges of the sector.} \label{fig:var_est}
\end{figure}
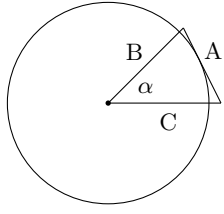  

We obtain a better bound for the even Steklov eigenvalues in the case of isosceles triangles in which the two equal angles are smaller than the remaining angle.  

\begin{cor} \label{cor:rayleigh_isotri} 
Let $T$ be an isosceles triangle such that the two equal angles of measure $\alpha$ are less than or equal to the remaining angle.  Then the even perimeter-normalized Steklov eigenvalues satisfy
\[\sigma_{2k}(T)L(\pa T) \leq {\pi^2 k^2}\frac{2(1+\cos(\alpha))}{\cos(\alpha)}\alpha \leq 6 \pi^2 k^2 \alpha.\] 
\end{cor}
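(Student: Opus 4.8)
The plan is to adapt the single-sector construction of Proposition~\ref{prop:trunc sector} by exploiting the fact that in an isosceles triangle the smallest angle $\alpha$ occurs at \emph{two} vertices, namely the two base vertices. I would place a congruent disk sector of opening angle $\alpha$ at each base vertex, with the two straight radial edges of each sector lying along the two sides of $T$ meeting at that vertex (one along the base, one along a leg). If these two sectors can be made disjoint, each of radius $L$, then the $k$ test functions $\sin(2\pi j r/L)$, $j=1,\dots,k$, from the proof of Proposition~\ref{prop:trunc sector} supported in the first sector, together with the $k$ analogous functions supported in the second, span a $2k$-dimensional subspace $E\in\mathcal{E}_{2k}(T)$ (each has vanishing boundary integral, so lies in the orthogonal complement of the constants). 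Because the two families have disjoint supports, for any $u\in E$ the numerator and denominator of $R(u)$ split as sums over the two sectors, and the mediant inequality bounds $R(u)$ by the larger of the two individual Rayleigh quotients, each of which is $\le \pi^2k^2\alpha/L$ exactly as in Proposition~\ref{prop:trunc sector}. The variational principle~\eqref{eq:rayleigh_vp} then yields $\sigma_{2k}(T)\le \pi^2 k^2\alpha/L$.

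The crux — and the main obstacle — is the geometric claim that one may take $L=a/2$, where $a$ is the base length; equivalently $L=b\cos\alpha$, where $b$ is the common leg length, using $a=2b\cos\alpha$. Two constraints must be verified. First, disjointness: since the two base vertices are at distance $a$ along the base, the closed sectors (being subsets of disks of radius $L$ about the base vertices) meet only at the base midpoint once $L=a/2$, so their interiors are disjoint. Second, containment: each sector must lie inside $T$. Placing the base vertex at the origin with the base along the positive $x$-axis and the leg along the ray $\theta=\alpha$, I would observe that the only side of $T$ that could cut the sector is the opposite side, and compute the smallest radius at which a ray of the wedge $\theta\in[0,\alpha]$ meets the line containing that side. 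This minimum equals $a\sin\alpha$ when $\alpha\ge\pi/4$ (the foot of the perpendicular from the vertex lies inside the wedge) and equals $b=a/(2\cos\alpha)$ when $\alpha<\pi/4$ (the nearest approach is along the leg, i.e.\ at the opposite vertex). In either regime this threshold is at least $a/2$ for all $\alpha\le\pi/3$, so a sector of radius $a/2$ fits. Hence $L=a/2$ satisfies both constraints.

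With $L=a/2$ in hand the estimate is immediate: $\sigma_{2k}(T)\le \pi^2 k^2\alpha/(a/2)=2\pi^2 k^2\alpha/a$, and since the perimeter is $L(\pa T)=a+2b=2b(1+\cos\alpha)$ while $a=2b\cos\alpha$, multiplying gives
\[
\sigma_{2k}(T)\,L(\pa T)\le 2\pi^2 k^2\alpha\,\frac{a+2b}{a}=\pi^2 k^2\,\frac{2(1+\cos\alpha)}{\cos\alpha}\,\alpha.
\]
Finally, the hypothesis that the base angles do not exceed the apex angle forces $\alpha\le\pi/3$, so $\cos\alpha\ge\tfrac12$ and $\frac{2(1+\cos\alpha)}{\cos\alpha}=2+\frac{2}{\cos\alpha}\le 6$, giving the second inequality $\sigma_{2k}(T)\,L(\pa T)\le 6\pi^2 k^2\alpha$. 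The only point requiring care beyond the routine Rayleigh-quotient bookkeeping (already carried out in Proposition~\ref{prop:trunc sector}) is the containment computation above; everything else is a direct combination of disjointly supported test functions.
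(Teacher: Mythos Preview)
Your proposal is correct and follows essentially the same approach as the paper: two disjoint disk sectors of radius $L=a/2=b\cos\alpha$ at the two base vertices, with $k$ radial test functions in each, giving a $2k$-dimensional test space and the bound $\sigma_{2k}(T)\le\pi^2k^2\alpha/L$. Your containment verification is more detailed than the paper's (which simply asserts, via a figure, that the two sectors meet only at the base midpoint); in fact the containment follows immediately once you note that each sector lies in the half of $T$ cut off by the altitude from the apex, since every point of the sector at $P$ has $x=r\cos\theta\le L=a/2$.
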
 

\begin{proof} Since the two equal angles are smaller than the remaining angle, $\alpha(T)\leq \frac{\pi}{3}$.   Letting $A$ denote the length of each of the two equal sides, we have $2A+2A\cos(\alpha)=$ perimeter of $T$.  Without loss of generality, we assume the perimeter of $T$ equals 1.   So 
\[ A=\frac{1}{2(1+\cos(\alpha))}.\]   
The altitude through the remaining angle (the largest angle) bisects the base, with each half having length 
\[ L:=A\cos(\alpha)= \frac{\cos(\alpha)}{2(1+\cos(\alpha))}.\]  
The 2 sectors of angle $\alpha$ and length $L$ emanating from the 2 vertices of angle $\alpha$ intersect only at the midpoint of the longest side of the triangle as shown in Figure \ref{fig:ray_iso}. 

\begin{figure}[h] \centering 
 \begin{tikzpicture}
\draw   (0,0) --  (6,0);
\draw (0,0) -- (3, 1); 
\draw (3, 1) -- (6,0);
\draw [dashed] (3,1) -- (3,0); 
\node at (-0.2,0) {\small $\alpha$}; 
\node at (6.3, 0) {\small $\alpha$}; 
\node at (1.5, -0.3) {\small L}; 
\node at (4.5, -0.3) {\small L};
\draw[blue] (3,0) arc[start angle=0, end angle=18.435, radius=3];
\draw[red] (3,0) arc[start angle=180, end angle=161.565, radius=3];
\end{tikzpicture} 
\caption{This isosceles triangle has two smaller equal angles of measure $\alpha$. The two circular sectors of radius $L$ centered at the two vertices of angle $\alpha$ intersect only at the midpoint of the side of length $2L$.} \label{fig:ray_iso} 
\end{figure}
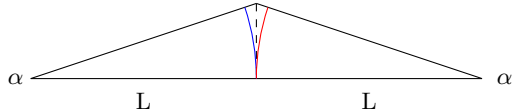 
 
Thus to estimate $\sigma_{2k}$, we can use $E_{2k}=\{u_1,\dots, u_k,v_1,\dots, v_k\}$ where the $u_j$'s, respectively $v_j$'s, are defined according to \eqref{eq:uj} with support on the first, respectively second, sector.  Here we are setting $r_1=0$ in \eqref{eq:uj}.   We then obtain
\[ \sigma_{2k}(T)\leq \frac{\pi^2 k^2}{L}\alpha={\pi^2 k^2}\frac{2(1+\cos(\alpha))}{\cos(\alpha)}\alpha. \] 
To complete the proof we note that $\frac{1 + \cos(\alpha)}{\cos(\alpha)}$ is an increasing function of $\alpha$ on $(0, \frac{\pi}{3})$ that has value $3$ at $\alpha = \frac{\pi}{3}$.
\end{proof} 

\begin{remark} 
 Setting $L(\partial T) = 1$, the estimate from Proposition \ref{prop:angle_estimate} gives 
\[ \sigma_{2k}(T) \leq \frac{32 \sqrt 3}{3} \pi^2 k^2 \alpha \approx 18.5\pi^2 k^2 \alpha ,  \] 
in comparison to $6 \pi^2 k^2 \alpha$ from the corollary. 
\end{remark} 

\subsection{Steklov eigenvalue bounds for $n$-gons}\label{subsec:n-gons}
We first give eigenvalue bounds for long thin $n$-gons that are \emph{not} necessarily convex as shown in Figure \ref{fig:nonconvex}.

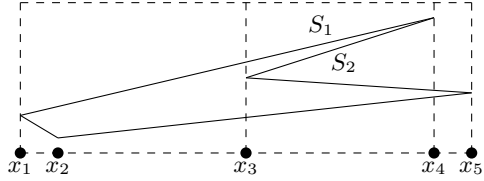
\begin{figure}[h] \centering 
\begin{tikzpicture}
\draw[dashed]   (-3,-1) --  (3,-1);
\draw[dashed]   (-3,-1) --  (-3,1);
\draw[dashed]   (-3,1) --  (3,1);
\draw[dashed]   (3,1) --  (3,-1);
\draw  (-3, -0.5) -- (2.5, 0.8); 
\draw  (-3, -0.5) -- (-2.5, -0.8); 
\draw  (-2.5, -0.8) -- (3, -0.2); 
\draw (3, -0.2) -- (0, 0); 
\draw (0, 0) -- (2.5, 0.8); 
\node at (-3, -1)[circle,fill,inner sep=1.5pt]{}; 
\node at (-3, -1.2) {\small $x_1$};
\node at (-2.5, -1)[circle,fill,inner sep=1.5pt]{}; 
\node at (-2.5, -1.2) {\small $x_2$};
\node at (0, -1)[circle,fill,inner sep=1.5pt]{}; 
\node at (0, -1.2) {\small $x_3$};
\node at (2.5, -1) [circle,fill,inner sep=1.5pt]{}; 
\node at (2.5, -1.2) {\small $x_4$};
\node at (3, -1) [circle,fill,inner sep=1.5pt]{}; 
\node at (3, -1.2) {\small $x_5$};
\draw[dashed]   (0,-1) --  (0,1);
\draw[dashed]   (2.5,-1) --  (2.5,1);
\node at (1, 0.7) {\small $S_1$}; 
\node at (1.3, 0.2) {\small $S_2$}; 

\end{tikzpicture}
\caption{The x-coordinates of this polygon are labelled from left to right. 
We create the dashed rectangle with vertices $x_3$ and $x_4$.  Then there are an even number of disjoint open segments in the boundary of the polygon whose closures have endpoints with $x$-coordinates equal to $x_3$ and $x_4$, respectively.  The topmost are denoted $S_1$ and $S_2$.}
\label{fig:nonconvex}
\end{figure} 

\begin{prop}\label{prop:rect bound} Let $\Omega$ be an $n$-gon contained in a rectangle $[0,\ell^*]\times [-\frac{w^*}{2}, \frac {w^*}{2}]$ with $w^* < \frac{\ell^*}{3(n-1)}$.   Assume that at least one vertex of $\Omega$ lies on each of the sides $x=0$ and $x=\ell^*$.   Then 
$$\sigma_k(\Omega)\leq \frac{2k^2(n-1)^2\pi^3 w^*}{(\ell^*-3(n-1)w^*)^2}.$$

\end{prop}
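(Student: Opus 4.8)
The plan is to reduce the statement to Proposition~\ref{prop:quadrilat}(a) by exhibiting a single long, thin quadrilateral $Q \subseteq \Omega$ inside a wide vertical sub-strip of the bounding rectangle, and then to pass from the geometric data $(w,\ell)$ of $Q$ to the stated bound by a short monotonicity argument. The role of the hypotheses $w^* < \frac{\ell^*}{3(n-1)}$ and "a vertex on each vertical side'' is exactly to guarantee both the existence of the strip and the inequality $\ell > 3w$ needed to invoke that proposition.

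First I would order the vertices of $\Omega$ by their $x$-coordinates. Since $\Omega \subseteq [0,\ell^*]\times[-\tfrac{w^*}{2},\tfrac{w^*}{2}]$ and a vertex lies on each of the two vertical sides, the distinct vertex $x$-coordinates form a partition $0 = \xi_1 < \dots < \xi_m = \ell^*$ with $m \leq n$. The $m-1 \leq n-1$ consecutive gaps sum to $\ell^*$, so by the pigeonhole principle some gap $[\xi_i,\xi_{i+1}]$ has width $\xi_{i+1}-\xi_i \geq \frac{\ell^*}{n-1}$, and by construction no vertex of $\Omega$ lies in the open strip $\xi_i < x < \xi_{i+1}$.

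Next I would analyze $\partial\Omega$ inside this strip. Because the open strip contains no vertices, any edge of $\Omega$ meeting it crosses it completely, contributing a chord from $\{x=\xi_i\}$ to $\{x=\xi_{i+1}\}$; since $\Omega$ is a bounded simple polygon, a vertical line through the strip meets $\partial\Omega$ in an even number of points, so there are an even number $2p$ of such chords. Ordering them from top to bottom as $S_1,\dots,S_{2p}$, the part of $\Omega$ lying between the topmost two is a quadrilateral $Q\subseteq \overline{\Omega}$ (degenerating to a triangle only if $S_1$ and $S_2$ share an endpoint on one of the vertical lines), whose two long sides $S_1,S_2$ lie in $\partial\Omega$. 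I would then check the hypotheses of Proposition~\ref{prop:quadrilat}(a): the two short sides of $Q$ are vertical, so $w := \max\{|p_1p_2|,|q_1q_2|\} \leq w^*$; each long side spans the full width of the strip, so $\ell := \min\{|p_1q_1|,|p_2q_2|\} \geq \frac{\ell^*}{n-1}$; and $w^* < \frac{\ell^*}{3(n-1)}$ yields $3w \leq 3w^* < \frac{\ell^*}{n-1} \leq \ell$, i.e.\ $\ell > 3w$. (If $Q$ degenerates to a triangle, part (b) applies with $w < \ell/2$ and gives a bound no larger.)

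Finally, Proposition~\ref{prop:quadrilat}(a) gives $\sigma_k(\Omega) \leq 2k^2\pi^3\frac{w}{(\ell-3w)^2}$. On the region $\ell > 3w$ the quantity $\frac{w}{(\ell-3w)^2}$ is increasing in $w$ and decreasing in $\ell$, so substituting the extremal admissible values $w=w^*$ and $\ell=\frac{\ell^*}{n-1}$ yields
$$\frac{w}{(\ell-3w)^2} \leq \frac{w^*}{\left(\frac{\ell^*}{n-1}-3w^*\right)^2} = \frac{(n-1)^2 w^*}{(\ell^*-3(n-1)w^*)^2},$$
which gives the claimed inequality. The main obstacle is the middle step: rigorously deducing, from the simplicity of the polygon together with the absence of vertices in the open strip, that $\partial\Omega$ crosses the strip in an even number of full chords and that the topmost two bound a genuine sub-quadrilateral of $\Omega$ to which Proposition~\ref{prop:quadrilat} applies. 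The pigeonhole selection of the strip and the final monotonicity estimate are routine by comparison.
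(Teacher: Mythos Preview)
Your proposal is correct and follows essentially the same approach as the paper: pigeonhole on the vertex $x$-coordinates to find a vertex-free strip of width at least $\ell^*/(n-1)$, observe that $\partial\Omega$ crosses this strip in an even number of full chords, take the quadrilateral (or triangle) between the topmost two, and apply Proposition~\ref{prop:quadrilat}. Your explicit monotonicity step justifying the substitution $w\mapsto w^*$, $\ell\mapsto \ell^*/(n-1)$ is in fact a bit more careful than the paper, which simply applies Proposition~\ref{prop:quadrilat} ``with $\ell=\frac{\ell^*}{n-1}$ and $w=w^*$'' and leaves that monotonicity implicit.
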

There are no assumptions on the perimeter of $\Omega$, although the hypotheses imply that $|\partial \Omega|>2\ell^*$.  

\begin{proof}
Let $\{x_1,\dots, x_m\}$ be the set of all $x$-coordinates of vertices of $\Omega$, labelled so that $0=x_1<x_2\dots <x_m=\ell$.   There may be more than one vertex with a given $x$-coordinate, so $m$ can be less than $n$.  We emphasize that the labelling of the $x_i$'s does not coincide with the usual cyclical labelling of vertices.     Since $m\leq n$, at least one index $i\in \{2,\dots, m\}$ satisfies $x_i-x_{i-1}\geq \frac{\ell^*}{n-1}$.  Fix such an $i$.

The subrectangle $$R_i:=(x_{i-1},x_i)\times (-\frac{w^*}{2},\frac{w^*}{2})$$ intersects $\partial \Omega$ in an even number of disjoint open segments $S_j$, each of whose closures $\overline{S}_j$ has endpoints on the two edges $\{x_{i-1}\}\times [-\frac {w^*} 2, \frac {w^*} 2]$  and $\{x_{i}\}\times [-\frac {w^*} 2, \frac {w^*} 2]$.  This is depicted in Figure \ref{fig:nonconvex}.
In general, the evenness follows from there being no vertices with $x$-coordinates contained in $(x_{i-1}, x_i)$.  Since the polygon is not collapsed, for each part of the boundary contained in this subrectangle there is an opposing segment, hence the segments come in pairs. Moving vertically down from the top of the subrectangle, one enters $\Omega$ upon crossing the highest segment (call it $S_1)$, exits $\Omega$ upon crossing the next one $S_2$, and so forth.    If $\Omega$ is convex, there are exactly two such segments; otherwise there can be more than two but we will focus just on the first two in what follows.  The region $Q$ in $R_i$ between $S_1$ and $S_2$ is either a quadrilateral or a triangle.   In either case, we can apply Proposition~\ref{prop:quadrilat} with $\ell=\frac{\ell^*}{n-1}$ and $w=w^*$ to obtain 
\[\sigma_k(\Omega)\leq \frac{2k^2\pi^3w}{(\ell-3w)^2}=\frac{2k^2(n-1)^2\pi^3 w^*}{(\ell^*-3(n-1)w^*)^2}.\]
\end{proof}

With the preceding result, we can partially generalize the eigenvalue bound for triangles to all convex polygons.  

\begin{thm} \label{th:awesome}
For $n=3,4,5,\dots$ and for 
$\delta_n>0$ sufficiently small, there exists a constant $C_n>0$ depending only on $n$ and $\delta_n$ such that if $\Omega$ is any convex $n$-gon  with smallest angle $\alpha(\Omega)<\delta_n$, then the Steklov eigenvalues of $\Omega$ satisfy 
$$\sigma_k(\Omega) L(\partial \Omega) \leq C_n k^2\alpha(\Omega), \quad \text{for all } k \geq 0.$$  
In particular, it is sufficient to assume that 
\begin{equation}\label{eq:delta n}\delta_n < \frac{0.98}{3n-2},\end{equation}
and we can take 
\begin{equation}\label{eq:C n}C_n = \frac{(n-1)^2\pi^3 }{(0.98)(\frac{1}{2}-(3n-2)\frac{\delta_n}{2(0.98)})^2}.\end{equation}
\end{thm}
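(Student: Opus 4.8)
The plan is to deduce the theorem from the $n$-gon estimate of Proposition~\ref{prop:rect bound} by showing that a convex $n$-gon with a sufficiently small angle is automatically ``long and thin,'' and hence fits into a rectangle of the type required there. By scaling we may assume $L(\partial\Omega)=1$, since $\sigma_k(\Omega)L(\partial\Omega)$ is scale invariant; the case $k=0$ is trivial because $\sigma_0\equiv 0$. Let $v$ be a vertex realizing the smallest angle $\alpha:=\alpha(\Omega)<\delta_n$. I would place $v$ at the origin with one of its two edges along the positive $x$-axis. Convexity forces $\Omega$ to lie in the wedge $\{0\le\theta\le\alpha\}$, so if $\ell^*$ denotes the largest $x$-coordinate attained on $\Omega$ (achieved at a vertex, since $x$ is linear), then every point of $\Omega$ satisfies $0\le y\le x\tan\alpha\le\ell^*\tan\alpha$. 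Hence $\Omega$ sits in a rectangle of length $\ell^*$ and width $w^*:=\ell^*\tan\alpha$, with $v$ on the side $x=0$ and the extremal vertex on the side $x=\ell^*$.

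The next step is to verify the hypotheses of Proposition~\ref{prop:rect bound}. The constraint $w^*<\frac{\ell^*}{3(n-1)}$ reduces to $\tan\alpha<\frac{1}{3(n-1)}$, which follows from $\alpha<\delta_n$ together with the elementary estimate $\tan\alpha\le\alpha/0.98$, valid on the relevant range (one checks that $\alpha/\tan\alpha$ decreases from $1$ and stays above $0.98$ well past $\delta_n<\frac{0.98}{3n-2}$ for all $n\ge 3$). Proposition~\ref{prop:rect bound} then yields
\[\sigma_k(\Omega)\le\frac{2k^2(n-1)^2\pi^3 w^*}{(\ell^*-3(n-1)w^*)^2}=\frac{2k^2(n-1)^2\pi^3\tan\alpha}{\ell^*\bigl(1-3(n-1)\tan\alpha\bigr)^2}.\]

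It remains to control $\ell^*$ from both sides. For the numerator I would use $\ell^*\le\tfrac12$ (the $x$-extent is at most the diameter, which is at most half the perimeter), so that $2(n-1)^2\pi^3 w^*=2(n-1)^2\pi^3\ell^*\tan\alpha\le\frac{(n-1)^2\pi^3}{0.98}\alpha$. For the denominator I would bound $\ell^*$ \emph{below} using that $\Omega$ is thin: since $\sqrt{dx^2+dy^2}\le|dx|+|dy|$ and the total variations of $x$ and of $y$ along $\partial\Omega$ are $2\ell^*$ and at most $2w^*$, the perimeter satisfies $1=L(\partial\Omega)\le 2\ell^*+2w^*=2\ell^*(1+\tan\alpha)$, whence $\ell^*\ge\frac{1}{2(1+\tan\alpha)}$. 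The crucial algebraic simplification is then
\[\ell^*-3(n-1)w^*=\ell^*\bigl(1-3(n-1)\tan\alpha\bigr)\ge\frac{1-3(n-1)\tan\alpha}{2(1+\tan\alpha)}=\frac12\Bigl(1-\frac{(3n-2)\tan\alpha}{1+\tan\alpha}\Bigr),\]
where the identity $3(n-1)+1=3n-2$ is exactly what produces the constant $3n-2$ in the statement; discarding $(1+\tan\alpha)^{-1}\le 1$ and using $\tan\alpha\le\alpha/0.98\le\delta_n/0.98$ gives the lower bound $\frac12-(3n-2)\frac{\delta_n}{2(0.98)}$. Substituting these two bounds into the displayed inequality yields $\sigma_k(\Omega)\le C_nk^2\alpha$ with the stated $C_n$, and undoing the scaling gives the claim.

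The main obstacle is the lower bound on $\ell^*$: the enclosing-rectangle construction only controls $w^*$ in terms of $\ell^*$ and $\alpha$, so without a length lower bound the right-hand side of Proposition~\ref{prop:rect bound} is uncontrolled. The thinness of $\Omega$, quantified through the perimeter normalization and the total-variation argument, is what supplies it; matching the explicit constants $C_n$ and $\delta_n$ then hinges on the telescoping identity above and on keeping the small-angle estimate $\tan\alpha\le\alpha/0.98$ uniform over all $n\ge 3$.
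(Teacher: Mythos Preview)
Your proof is correct and follows essentially the same approach as the paper: place the small-angle vertex at the origin, enclose $\Omega$ in a thin rectangle, apply Proposition~\ref{prop:rect bound}, and control the rectangle's dimensions via $\ell^*\le \tfrac12$ and the perimeter inequality $2\ell^*+2w^*\ge 1$. The only cosmetic difference is that the paper puts the angle \emph{bisector} on the $x$-axis (so the width is governed by $\tan(\alpha/2)$ and the bound $\ell<\tfrac12$ is absorbed into $w$ up front), whereas you put one \emph{edge} on the $x$-axis (so $w^*=\ell^*\tan\alpha$ and you bound $\ell^*$ separately in numerator and denominator); the algebra and final constants coincide.
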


\begin{proof} 
We place $\Omega$ so that the vertex of its smallest interior angle, say of measure $\alpha$, is at the origin, and the horizontal axis ($x$-axis) bisects this angle.   Let $\ell$ be the maximum distance of the vertices of $\Om$ from the $y$-axis, and assume without loss of generality that the perimeter of $\Omega$ is one.   Then $\ell < \frac 1 2$.  By convexity, $\Omega$ is contained in the isosceles triangle with vertices $(0,0)$ and $(\ell, \pm \ell\tan(\alpha/2))$ as in Figure~\ref{fig:placeP}.   
Moreover, since $\ell<\frac{1}{2}$, the polygon $\Omega$ lies in a rectangle $R$ of length $\ell$ and width $w:=\tan(\alpha/2)$ as in Figure~\ref{fig:placeP}; the perimeter of $R$ is greater than the perimeter of $\Omega$, i.e., greater than one.    Thus $2\ell+2w>1$ and 
  \begin{equation}\label{eq:ell bd}\ell>0.5 -w.\end{equation}   
 
 To obtain an upper bound for $w$, assume that $\alpha<\frac{1}{7}$.  (This will be the case if $\alpha<\delta_n$, where $\delta_n$ is given as in the statement of the theorem.)  The Maclaurin series for the cosine then implies that $\cos(\alpha/2)>\frac{97}{98}>0.98$.  Thus
\begin{equation}\label{eq:tan value}w=\tan(\alpha/2)<\frac{\sin(\alpha/2)}{0.98}<\frac{\alpha}{2(0.98)}.\end{equation}

  To apply Proposition~\ref{prop:rect bound}, we require that $\ell-3(n-1)w>0$.   By Equation~\ref{eq:ell bd}, 
  \[\ell -3(n-1)w> 0.5-w -3(n-1)w=0.5 -(3n-2)w,\] so we need that
  \[w< \frac{1}{2(3n-2)}.\]   By Equation~\ref{eq:tan value}, it thus suffices that 
  \[\alpha<\frac{0.98}{3n-2}.\]   
   
   Choosing $\delta_n$ as in Equation~\eqref{eq:delta n} and letting $\alpha\leq \delta_n$, Proposition~\ref{prop:rect bound} and Equations~\ref{eq:ell bd} and \ref{eq:tan value} yield
\begin{equation}\label{eq:mess}\sigma_k(\Omega)\leq \frac{2k^2(n-1)^2\pi^3 w}{(0.5-w-3(n-1)w)^2}< \frac{2k^2(n-1)^2\pi^3 \frac{\alpha}{2(0.98)}}{(\frac{1}{2}-(3n-2)\frac{\delta_n}{2(0.98)})^2}.\end{equation}
\end{proof}

\begin{figure}[h] \centering 
\begin{tikzpicture}
\draw  (0,0) -- (8, -1); 
\draw  (0,0) -- (8, 1); 
\draw (8, -1) -- (8,1); 
\node at (-0.5, 0) {\small $(0,0)$}; 
\node at (1,0){\small $\alpha$}; 
\draw[dashed] (0,1.1) --  (8,1.1);
\draw[dashed] (0,-1.1) --  (8,-1.1);
\draw[dashed] (8,1) --  (8,1.1);
\draw[dashed] (8,-1.1) --  (8,-1);
\draw[dashed] (0,-1.1) --  (0,1.1);
\draw [thick,decoration={brace,mirror},decorate] (0,-1.3) -- (8,-1.3); 
\node at (4, -1.7) {$\ell$}; 
\node at (8.6, 0) {$w$}; 
\draw [thick,decoration={brace,mirror},decorate] (8.3,-1.1) -- (8.3,1.1); 
\end{tikzpicture}
\caption{The polygon $\Omega$ (not shown) lies inside an isosceles triangle, which in turn lies inside a rectangle. One vertex of $\Omega$ is at the origin and at least one vertex of $\Omega$ lies on the righthand edge of the isosceles triangle and thus of the rectangle.}
\label{fig:placeP}
\end{figure}
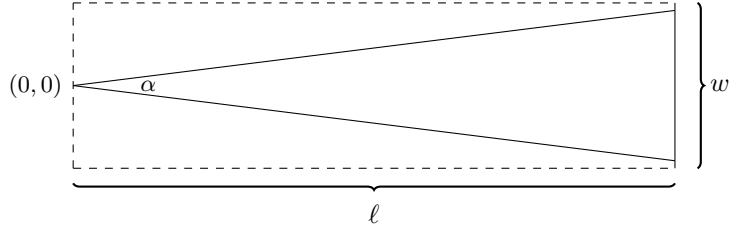

The eigenvalue bounds in Theorem~\ref{th:awesome} can be reversed to yield an inverse spectral result: 

\begin{cor}\label{cor:awesome2}
Given $n$, let $\delta_n$ and $C_n$ be as in Equations~\eqref{eq:delta n} and \eqref{eq:C n}, and let $k$ be any positive integer. Then for all convex $n$-gons $\Om$, the interior angles $\alpha_1,\dots, \alpha_n$ of $\Om$ satisfy 
$$\alpha_j\geq \min\left\{\delta_n, \frac{\sigma_k(\Om)L(\pa \Om)}{C_n k^2}\right\},\,\,j=1,\dots, n.$$
Thus a lower bound on the $k$th perimeter-normalized Steklov eigenvalue yields a lower bound on the angles of $\Om$.   In particular, there exists a uniform lower bound on the angles of any collection of mutually Steklov isospectral convex $n$-gons.
\end{cor}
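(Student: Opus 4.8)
The plan is to reverse the inequality of Theorem~\ref{th:awesome} by a simple dichotomy on the smallest angle $\alpha(\Om):=\min_j\alpha_j$, and then to combine the resulting angle bound with the fact that both $\sigma_k$ and the perimeter are Steklov spectral invariants.

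First I would fix a convex $n$-gon $\Om$ and a positive integer $k$, and split into two cases according to whether $\alpha(\Om)\geq\delta_n$ or $\alpha(\Om)<\delta_n$. In the first case there is nothing to prove, since $\alpha_j\geq\alpha(\Om)\geq\delta_n\geq\min\{\delta_n,\ldots\}$ for every $j$. In the second case the hypothesis $\alpha(\Om)<\delta_n$ is exactly the hypothesis of Theorem~\ref{th:awesome}, which gives $\sigma_k(\Om)L(\pa\Om)\leq C_n k^2\alpha(\Om)$. Because $C_n k^2>0$, I would divide to obtain $\alpha(\Om)\geq\frac{\sigma_k(\Om)L(\pa\Om)}{C_n k^2}$, and then use $\alpha_j\geq\alpha(\Om)$ together with domination by the minimum to conclude $\alpha_j\geq\min\{\delta_n,\frac{\sigma_k(\Om)L(\pa\Om)}{C_n k^2}\}$. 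Combining the two cases yields the stated bound for every convex $n$-gon.

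For the concluding assertion about isospectral families, I would observe that if $\Om$ and $\Om'$ are Steklov isospectral then $\sigma_k(\Om)=\sigma_k(\Om')$ for all $k$ by definition, while the perimeter is itself a Steklov spectral invariant by the Weyl asymptotics~\eqref{eq:weyl}; hence $L(\pa\Om)=L(\pa\Om')$ and the quantity $\frac{\sigma_k(\Om)L(\pa\Om)}{C_n k^2}$ is constant across the entire isospectral class. Fixing any $k\geq1$ and using $\sigma_k(\Om)>0$ from~\eqref{eq:def_stek_ev}, this common value is strictly positive, so $\min\{\delta_n,\frac{\sigma_k(\Om)L(\pa\Om)}{C_n k^2}\}$ is a single positive number that bounds all interior angles of every member of the class from below.

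I do not expect a genuine obstacle here, as the corollary is a direct reversal of Theorem~\ref{th:awesome}; the two points that require care are handling the regime $\alpha(\Om)\geq\delta_n$, where the theorem does not apply, separately through the minimum, and invoking perimeter invariance so that it is the normalized eigenvalue $\sigma_k(\Om)L(\pa\Om)$, rather than $\sigma_k(\Om)$ alone, that is constant on an isospectral class.
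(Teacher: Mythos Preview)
Your proof is correct and matches the paper's approach: the paper does not give an explicit proof of this corollary, presenting it simply as the reversal of Theorem~\ref{th:awesome}, which is exactly what your dichotomy on $\alpha(\Om)$ versus $\delta_n$ carries out. Your additional remarks on perimeter invariance via the Weyl law and on the positivity of $\sigma_k$ for $k\geq 1$ fill in the details of the final assertion cleanly.
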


Theorem~\ref{th:ss_ngons}, Lemma~\ref{lem: |c|}, and Corollary~\ref{cor:awesome2} together imply that the characteristic polynomial of an admissible convex $n$-gon $\Om$ along with a lower bound on the $k$th Steklov eigenvalue for some $k\in\Z^+$ suffice to determine $\Om$ up to finitely many possibilities among all convex $n$-gons.  We will see in the next section that the characteristic polynomial alone suffices to obtain finiteness of Steklov isospectral admissible convex $n$-gons.  However, Corollary~\ref{cor:awesome2} will play a role in extending the spectral finiteness results to a larger class of $n$-gons in Section~\ref{sec: finiteness}.  Using Corollary~\ref{cor:awesome2}, it is possible to obtain finiteness of certain Steklov isospectral sets of convex polygons, but it is not clear if that result alone suffices to obtain an upper bound on the number of such mutually Steklov isospectral non-congruent polygons.  For this reason, in the next section we will use a different approach to obtain explicit bounds on the size of such sets.

\section{Bounds on the sizes of Steklov isospectral sets of admissible convex polygons}\label{sec: isospec size}
We will give upper bounds on the number of mutually non-congruent convex $n$-gons that can be Steklov isospectral to a given admissible convex $n$-gon.  Although we expect the following result is contained in the literature, we include it with a short proof, since it is essential to our results.

\begin{lemma}\label{lem:n-3}
Let $\Omega$ be a convex $n$-gon.  Assume that we know the cyclically ordered side lengths $\bell=(\ell_1,\dots, \ell_n)$ and the corresponding vector of interior angles $\balpha=(\alpha_1,\dots, \alpha_n)$ but with three of the entries replaced by blank place holders.  Then we can uniquely determine the three missing angles and therewith $\Omega$ up to congruence.
\end{lemma}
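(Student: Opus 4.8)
The plan is to reinterpret the three vertices carrying the unknown angles as the corners of an auxiliary triangle whose side lengths are already forced by the known data, and then to invoke the side-side-side congruence criterion. This matches a degrees-of-freedom count: a convex $n$-gon has $2n-3$ parameters up to congruence, and we are handed exactly $2n-3$ data (the $n$ lengths together with $n-3$ angles), with the three unknowns corresponding precisely to the three degrees of freedom of a triangle.

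Label the three vertices with missing angles as $v_a, v_b, v_c$, in cyclic order. They partition the boundary $\partial\Omega$ into three polygonal arcs; along each arc every intermediate interior angle is one of the $n-3$ known angles and every edge length is known. Hence each arc is \emph{rigid}: its congruence class as a planar polygonal path, and in particular the distance between its two endpoints, is completely determined by the given data and is independent of the three unknown angles. First I would therefore record the three chord lengths
\[ d_1 := |v_a v_b|,\qquad d_2 := |v_b v_c|,\qquad d_3 := |v_c v_a|,\]
each computable from the known lengths and angles alone (by laying the corresponding rigid arc out in the plane, or by iterating the law of cosines); when two unknown-angle vertices happen to be adjacent, the arc between them is a single known edge and its chord length is just that edge length.

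Next I would verify that $v_a, v_b, v_c$ are not collinear. Since all interior angles lie in $(0,\pi)$, the polygon is strictly convex and every vertex is an extreme point of the convex hull of the vertex set; if one of the three lay strictly between the other two on a line it would fail to be extreme, a contradiction. Thus $d_1, d_2, d_3$ obey the strict triangle inequalities and determine a non-degenerate triangle, which by SSS is unique up to congruence. Now any convex $n$-gon $\Omega'$ sharing the known data has arcs with the \emph{same} congruence classes, hence the same chords $d_1, d_2, d_3$ and a congruent triangle $v_a v_b v_c$; reattaching the identically shaped rigid arcs to the corresponding sides (on the outside, as convexity dictates) yields $\Omega'\cong\Omega$. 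Since interior angles are congruence invariants, the three missing angles are thereby determined---explicitly, the interior angle of $\Omega$ at $v_a$ decomposes as the sum of the triangle's angle at $v_a$ (obtained from $d_1,d_2,d_3$ by the law of cosines) together with the two known angles the chords $v_a v_b$ and $v_c v_a$ make with the adjacent edges $\ell_{a+1}$ and $\ell_a$, and symmetrically at $v_b$ and $v_c$.

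The argument is essentially immediate once the triangle is in place; the only points needing care are the non-degeneracy just handled and the reflection ambiguity built into SSS. The latter is harmless, since a reflected reassembly produces only the mirror image of $\Omega$, which is congruent to it and carries identical interior angles. I expect the main (and only mild) obstacle to be the bookkeeping that confirms the chord lengths depend solely on the known data across all cyclic placements of $a,b,c$, including the degenerate arcs arising when unknown-angle vertices are adjacent.
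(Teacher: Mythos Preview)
Your argument is correct and takes a genuinely different route from the paper's. The paper proceeds by induction on $n$: it selects a vertex $v_n$ with a \emph{known} angle, slices off the triangle $v_{n-1}v_nv_1$ (determined by SAS from $\ell_n,\alpha_n,\ell_1$), and reduces to a convex $(n-1)$-gon with all side lengths known and again exactly three angles missing. Your approach is direct: you pass immediately to the triangle on the three \emph{unknown}-angle vertices, observe that the three boundary arcs joining them are rigid from the given data, compute the chords, and invoke SSS once.

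Each has its virtues. The paper's induction keeps every step elementary (one SAS per step) and sidesteps the need to argue non-collinearity or the ``which side of the chord'' issue, at the cost of an iterated construction. Your version is more conceptual---it makes transparent \emph{why} precisely three angles may be omitted (they are the three degrees of freedom of a triangle)---and yields an explicit decomposition of each missing angle as a sum of one triangle angle plus two arc-to-chord angles. The only places requiring care in your write-up are exactly the ones you flagged: strict convexity (all angles in $(0,\pi)$) forces every vertex to be extreme, so $v_a,v_b,v_c$ are non-collinear; and in any convex realization each arc lies on the side of its chord opposite the third vertex, which pins down the attachment and leaves only the global mirror ambiguity.
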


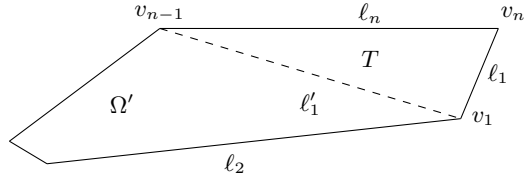
\begin{figure}[h] \centering 
\begin{tikzpicture}
\draw  (-3, -0.5) -- (-1, 1);  
\draw[dashed] (-1,1) -- (3, -0.2);
\draw (-1, 1) -- (3.5, 1); 
\draw  (-3, -0.5) -- (-2.5, -0.8); 
\draw  (-2.5, -0.8) -- (3, -0.2); 
\draw (3, -0.2) -- (3.5, 1); 
\node at (0, -0.8) {\small $\ell_2$}; 
\node at (-1, 1.2) {\small $v_{n-1}$};
\node at (3.7, 1.2) {\small $v_n$};
\node at (1.8, 1.2) {\small $\ell_{n}$};
\node at (3.3, -0.2) {\small $v_1$};
\node at (3.5, 0.4) {\small $\ell_1$};
\node at (-1.5, 0) {\small $\Omega'$}; 
\node at (1.8, 0.6) {\small $T$}; 
\node at (1, 0) {\small $\ell_1'$}; 

\end{tikzpicture}
\caption{A convex $n$-gon (in this case $n=5$) is shown here with $v_n$ a vertex whose interior angle is known.  We divide $\Omega$ by drawing a line segment from $v_{n-1}$ to $v_1$, splitting $\Omega$ into a convex $(n-1)-$gon $\Omega'$ and a triangle $T$.  }
\label{fig:omega'}
\end{figure} 

\begin{proof}
We prove the lemma by induction.  The lemma holds when $n=3$ since triangles that have all their side lengths in common are congruent.   Now let $n>3$ and assume the lemma holds for $(n-1)$-gons.  Let $\Omega$ be an $n$-gon with the given data.  Denote by $v_1,\dots, v_n$ the vertices with the corresponding angles $\alpha_1,\dots, \alpha_n$.  Let
$$\mathcal{K}=\{j\in \{1,\dots, n\}:\alpha_j\mbox{\,is\,known}\}.$$
For notational convenience in what follows, we assume without loss of generality that $n\in \mathcal{K}$.  (Otherwise, we may cyclically permute the entries of $\bell$ and $\balpha$.)  In particular, the edges $v_{n-1}v_n$ and $v_nv_1$ adjacent to $v_n$ have lengths $\ell_n$ and $\ell_1$ respectively.  The line segment $v_{n-1}v_1$ divides $\Omega$ into a triangle $T$ with vertices $v_{n-1}, v_n, v_1$ and a necessarily convex $(n-1)$-gon $\Omega'$ with vertices $v_1, \dots, v_{n-1}$  as in Figure~\ref{fig:omega'}.  Since we know the angle of $T$ at vertex $v_n$ and the lengths of the two adjacent sides, we can determine $T$.  In particular, we can read off the length $\ell_1':=|v_{n-1}v_1|$.  The remaining cyclically ordered side lengths of $\Omega'$ are given by $\ell'_j=\ell_j$, $j=2,\dots, n-1$.   The angle of $\Omega'$ at vertex $v_1$ is the difference between the angles of $\Omega$ and $T$ at that vertex and similarly for the angle at $v_{n-1}$.   
Define $\mathcal{K}'$ analogously to $\mathcal{K}$.    Since the interior angles of $T$ are known, one easily sees that 
$$\mathcal{K}'=\mathcal{K}\cap \{1,\dots, n-1\}$$
and thus $\vert\mathcal{K}'\vert=(n-1)-3$; i.e., the only missing data for $\Omega'$ consists of three angles.  The induction hypothesis yields these three remaining angles of $\Omega'$, and we can determine the three missing angles of $\Omega$.
\end{proof}

\begin{thm}\label{thm:upper bound} Let $\Omega$ be a convex admissible $n$-gon and let $\isop(\Omega)$ be the set of all congruence classes of convex $n$-gons (necessarily admissible) that have the same characteristic polynomial as $\Omega$.   Then the order $ \vert\isop(\Om) \vert$ of $\isop(\Om)$ satisfies the following:
\begin{enumerate}
\item[(a)] If $\Omega$ has no even angles, we have $ \vert\isop(\Omega)\vert \leq \binom{n}{3}.$  
\item[(b)] If $\Omega$ has exactly one even angle, then $ \vert \isop(\Omega) \vert\leq\binom{n-1}{n-3}=\binom{n-1}{2}.$
\item[(c)] If $\Omega$ has exactly two even angles, then $ \vert\isop(\Omega) \vert\leq 4(n-2).$  This bound can be improved to $2(n-2)$ if the even angles are adjacent.
\item[(d)] If $\Omega$ has three even angles, then $ \vert\isop\Omega)\vert \leq 8$.  This bound can be improved to 4 if two of the even angles are adjacent and to 2 if all three of the even angles are in consecutive order.
\end{enumerate}
\end{thm}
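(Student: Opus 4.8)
The plan is to reconstruct, up to congruence, every convex $n$-gon $\Omega'$ sharing the characteristic polynomial of $\Omega$, and to bound the number of reconstructions. When $\Omega$ has no even angles I would use Theorem~\ref{th:ss_ngons}(b) to choose a labeling in which $\bell(\Omega')=\bell(\Omega)$ and $\pmb{C}(\Omega')=\pm\pmb{C}(\Omega)$; when even angles are present I would instead use Corollary~\ref{cor:excep}, which matches the exceptional components of $\Omega'$ to those of $\Omega$ up to inversion and an overall sign. Either way the edge lengths $\bell(\Omega)$ and the absolute values $\pmca(\Omega)=(|c(\alpha_1)|,\dots,|c(\alpha_n)|)$ are known, and by Theorem~\ref{th:ss_ngons}(a) the $k$ even vertices of $\Omega'$ occupy the $k$ known junctions between exceptional components.

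The geometric core is a convexity count. The exterior angles of a convex polygon are positive and sum to $2\pi$, so a convex $n$-gon with at least one obtuse angle has at most three non-obtuse angles; the only polygon with no obtuse angle at all is the rectangle, whose four even angles place it outside cases (a)--(d). Thus in each case the $k$ even angles use up $k$ of the at most three non-obtuse slots, so $\Omega'$ has at most $3-k$ acute ordinary (non-even) angles, and every other ordinary angle is obtuse. By Lemma~\ref{lem: |c|}(c), $|c|$ is injective on obtuse angles, so each obtuse ordinary angle of $\Omega'$ is recovered from its known value $|c(\alpha'_j)|$. The only angles of $\Omega'$ left undetermined are therefore the $k$ even angles (known positions, but values not fixed by $|c|\equiv 1$) and the at most $3-k$ acute ordinary angles.

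Now I would invoke Lemma~\ref{lem:n-3}: $\bell$ together with any $n-3$ angles determines $\Omega'$ up to congruence. Choosing three ``blank'' positions consisting of the $k$ even vertices together with $3-k$ ordinary vertices selected to cover the acute angles, the remaining $n-3$ angles are the known obtuse ordinary ones, so Lemma~\ref{lem:n-3} returns at most one congruence class. Since the $k$ even positions are forced and the $3-k$ extra blanks range over the $n-k$ ordinary positions, there are $\binom{n-k}{3-k}$ choices, giving $\binom{n}{3}$, $\binom{n-1}{2}$, $n-2$, and $1$ for $k=0,1,2,3$. For $k\ge 2$ one further factor enters because Corollary~\ref{cor:excep} fixes each exceptional component only up to inversion: reversing a component that carries at least one ordinary vertex yields a genuinely different cyclic gluing and contributes a factor $2$, whereas a component lying between two adjacent even vertices is a single edge and is unchanged by inversion. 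This accounts for the $2^2=4$ and $2^3=8$ in (c) and (d), and for the reductions to $2(n-2)$, $4$, and $2$ precisely when adjacency of even angles forces one or two single-edge components. (For $k=1$ the lone component is the entire boundary, whose inversion is a global reflection and hence trivial, as in Remark~\ref{rem: rem on thm}, so no extra factor appears, consistent with $\binom{n-1}{2}$.)

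The step I expect to be the main obstacle is pinning down this last combinatorial factor rigorously: one must verify that the inequivalent ways of orienting and cyclically arranging the exceptional components---modulo a global reversal of $\partial\Omega$ and the cyclic relabeling of its edges---are counted by exactly the claimed powers of two, and that single-edge components arising from adjacent even angles are the precise source of the improvements. A secondary point is to handle the degenerate small-$n$ configurations (for instance triangles with two or three even angles), where Lemma~\ref{lem:n-3} reduces to the side-side-side criterion, and to confirm that the rectangle is the unique configuration excluded by the obtuse-angle dichotomy.
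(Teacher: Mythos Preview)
Your proposal is correct and follows essentially the same route as the paper's proof: extract $\bell$ and $\pmca$ from the characteristic polynomial via Theorem~\ref{th:ss_ngons}, Remark~\ref{rem: rem on thm}, and Corollary~\ref{cor:excep}; use that at least $n-3$ angles of a convex $n$-gon are obtuse together with the injectivity of $|c|$ on obtuse angles (Lemma~\ref{lem: |c|}(c)); and then apply Lemma~\ref{lem:n-3} to reconstruct $\Omega'$ once $n-3$ angle values are fixed, counting $\binom{n-k}{3-k}$ choices for the blank positions and multiplying by the inversion ambiguity of the exceptional components. The paper handles the combinatorial factor you flag as the ``main obstacle'' in exactly the informal way you anticipate, simply observing that each of the $k$ exceptional components contributes a factor of $2$ unless it is a single edge (which happens precisely when two even vertices are adjacent), so your concern there is not an actual gap.
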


\begin{proof}

Recall that the characteristic polynomial determines the number of even angles (see Theorem~\ref{th:ss_ngons}). 
(a) By Theorem~\ref{th:ss_ngons}(b) and the fact that $\Omega$ is admissible, the characteristic polynomial determines $\bell(\Omega)$ and
$\pmca(\Omega)$ modulo a choice of orientation and cyclic labelling.   (See Notation and Remarks~\ref{nota:excep comps} for the definition of $\pmca(\Om)$.)   For every convex $n$-gon $\Om$, at least $n-3$ of the interior angles are obtuse, and Lemma~\ref{lem: |c|} tells us that $|c|$ is injective on the set of all obtuse angles.   Thus, by Lemma~\ref{lem:n-3}, $\Om$ is uniquely determined up to congruence by $\bell(\Om)$, the locations (i.e., the corresponding subscripts $j$) of $n-3$ obtuse angles among the $\alpha_j$'s, and the corresponding values of $|c|(\alpha_j)$ for these obtuse angles.  There are $\binom{n}{n-3}=\binom{n}{3}$ possible ways that the obtuse angles may be distributed among $\alpha_1,\dots, \alpha_n$. 

(b) We may choose the labeling so the unique even angle is $\alpha_n$.   By Remark~\ref{rem: rem on thm}, the characteristic polynomial again determines both $\bell(\Om)$ and $\pmca(\Om)$ up to orientation and cyclic relabeling.  There are $\binom{n-1}{n-3}=\binom{n-1}{2}$ possible ways that $n-3$ obtuse angles may be distributed among $\alpha_1,\dots, \alpha_{n-1}$, and (b) follows.

(c) Let $\alpha_m$ and $\alpha_n$ be the two even angles; here $m\in \{1,\dots, n-1\}$.  The exceptional components then satisfy $\bell(\y_1)=(\ell_1,\dots,\ell_m)$,  $\bell(\y_2)=(\ell_{m+1},\dots, \ell_n)$, $\pmca(\y_1)=(|c|(\alpha_1),\dots, |c|(\alpha_{m-1}))$, and $\pmca(\y_2)=(|c|(\alpha_{m+1}),\dots, |c|(\alpha_{n-1}))$.    Corollary~\ref{cor:excep}   tells us that this information is determined up to the four possible reorderings that arise from the choices of $\y_i$ versus $-\y_i$.   Once the ordering is fixed, it remains to choose $n-3$ obtuse angles among the $n-2$ angles $\{ \alpha_1, \dots, \alpha_{n-1}\} \setminus \{\alpha_m\}$ in order to determine $\Om$.   Thus $\Om$ is spectrally determined up to at most $4\binom{n-2}{n-3}=4(n-2)$ possibilities.  If the even angles are adjacent, then one of the exceptional components $\y_i$ consists of a single edge and $\bell(\y_i)=\bell(-\y_i)$.   Thus we have only two rather than four possible reorderings, proving the final statement in part (c).

(d)  The proof is similar to that of (c).   We now have three exceptional components, each of which may undergo a change of orientation, so we have $2^3=8$ possible reorderings.   Since we have three even, thus non-obtuse, angles, all the remaining angles are obtuse so there are no further choices to be made.  The characterisic polynomial thus determines $\Omega$ up to 8 possibilities.  If two of the even angles are adjacent, then the exceptional component between them consists of a single edge and thus $\bell(\y_i)=\bell(-\y_i)$, so we are reduced to $2^2=4$ possibilities.   If all three even angles are in consecutive order, then two exceptional components are singleton edges and only the orientation of the remaining exceptional component remains to be determined, thus reducing the size of the isospectral set to at most 2.
\end{proof}

Since the characteristic polynomial is a Steklov spectral invariant, our theorem also quantifies the maximum number of congruence classes of convex admissible $n$-gons that have a common Steklov spectrum.  Moreover, for certain convex admissible $n$-gons, that number is one:

\begin{prop} \label{th:finitelymany}~
Let $\Omega$ be a convex admissible $n$-gon all of whose angles are obtuse.  Then $\Omega$ is uniquely determined up to congruence by its Steklov spectrum within the set of all convex $n$-gons.
\end{prop}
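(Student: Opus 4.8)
The plan is to reduce everything to the characteristic polynomial and then upgrade the bound $\binom{n}{3}$ of Theorem~\ref{thm:upper bound}(a) to genuine uniqueness by exploiting that all angles of $\Om$ are obtuse together with the angle-sum constraint. First I would take any convex $n$-gon $\Om'$ that is Steklov isospectral to $\Om$. Since the characteristic polynomial is a Steklov spectral invariant (Theorem~\ref{poly is spec invar}), $\Om$ and $\Om'$ have the same characteristic polynomial. Because $\Om$ is admissible and $\Om'$ has at most $n$ vertices, Proposition~\ref{prop:vertices} forces $\Om'$ to be admissible as well. An obtuse angle lies in $(\frac{\pi}{2},\pi)$ and hence is neither odd nor even, so $\Om$ has no even angles; by Theorem~\ref{th:ss_ngons}(a) the same holds for $\Om'$. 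We are therefore in the situation of Theorem~\ref{th:ss_ngons}(b), which lets us choose boundary orientations and cyclic labelings so that $\bell(\Om)=\bell(\Om')$ and $\pmb{C}(\Om)=\pm\pmb{C}(\Om')$; taking absolute values gives $\pmca(\Om)=\pmca(\Om')$, i.e. $|c(\alpha_j)|=|c(\alpha_j')|$ for every $j$ in the matched labeling.

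The key step, and the one I expect to be the main obstacle, is to rule out that $\Om'$ has any non-obtuse angle sharing a $|c|$-value with the corresponding obtuse angle of $\Om$. By Lemma~\ref{lem: |c|}(c), for each $j$ the value $v_j:=|c(\alpha_j)|$ lies in $(0,1)$ and $|c|^{-1}(\{v_j\})$ meets each interval $(\frac{\pi}{m+1},\frac{\pi}{m})$ in exactly one point. The obtuse interval is $(\frac{\pi}{2},\pi)$, the case $m=1$, so $\alpha_j$ is the \emph{unique} obtuse preimage of $v_j$ and every other preimage lies below $\frac{\pi}{2}$. Consequently $\alpha_j'\in|c|^{-1}(\{v_j\})$ satisfies $\alpha_j'\le\alpha_j$, with equality exactly when $\alpha_j'=\alpha_j$. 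Summing over $j$ and using that both polygons are convex $n$-gons, so that $\sum_j\alpha_j'=(n-2)\pi=\sum_j\alpha_j$, the inequalities $\alpha_j'\le\alpha_j$ can hold simultaneously only if each is an equality. Hence $\balpha(\Om')=\balpha(\Om)$.

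Finally, I would combine $\bell(\Om')=\bell(\Om)$ and $\balpha(\Om')=\balpha(\Om)$ in matching cyclic order: a convex polygon is determined up to congruence by its cyclically ordered edge lengths and interior angles (this is the limiting $n-3=0$ case of the reconstruction carried out in Lemma~\ref{lem:n-3}), so $\Om'\cong\Om$. The whole argument hinges on the middle paragraph: the mere fact that $\Om'$ shares the characteristic polynomial of $\Om$ only pins down $\bell$ and $\pmca$, and it is the combination of the all-obtuse hypothesis, which makes $\alpha_j$ the largest preimage of its $|c|$-value, with the fixed angle sum $(n-2)\pi$ that eliminates the remaining $\binom{n}{3}-1$ combinatorial possibilities allowed by Theorem~\ref{thm:upper bound}(a).
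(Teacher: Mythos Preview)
Your proposal is correct and follows essentially the same route as the paper's proof: use the characteristic polynomial to recover $\bell(\Om)$ and $\pmca(\Om)$, observe via Lemma~\ref{lem: |c|}(c) that the obtuse angle is the largest preimage of each $|c|$-value, and conclude $\balpha(\Om')=\balpha(\Om)$ from the angle-sum constraint $(n-2)\pi$. The paper's version is terser but the logic is identical.
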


\begin{proof} 
The assumption that all angles of $\Omega$ are obtuse says, in particular, that there are no even angles.  
Thus the spectrum determines $\bell(\Omega)$ and
$\pmca(\Omega)$ modulo a choice of orientation and cyclic labelling.  By Lemma~\ref{lem: |c|}(c), the map $|c|: \left(\frac{\pi}{2}, \pi\right) \rightarrow (0,1)$ is one-to-one on the set of obtuse angles.  Consequently, if $\Om'$ is another convex $n$-gon with $\pmca(\Om') =\pmca(\Om)$, then the sum of all the angles of $\Om'$ will be less than $(n-2)\pi$ unless $\balpha(\Om')=\balpha(\Om)$.  Thus  $\Om'$ is congruent to $\Om$. 
\end{proof}

In the proofs of Theorem~\ref{thm:upper bound} and Proposition~\ref{th:finitelymany}, we did not use the full strength of the spectral invariant $\pm\pmc(\Omega)$ since we instead used $\pmca(\Om)$.   We can sometimes improve the upper bound by using the stronger invariant, as we now demonstrate. 

\begin{prop}\label{prop:improved bounds}
Let $\Omega$ be a convex admissible $n$-gon and let $\isos(\Omega)$ be the maximal set of all congruence classes of convex $n$-gons that are Steklov isospectral to $\Omega$.  Denote by $b$ the number of interior angles of $\Omega$ that lie in 
\[ B^+:=\{\alpha\in(0,\pi): \,0<c(\alpha)<1\}=\bigcup_{m\in4\Z^+}\, \left(\frac{\pi}{m+1}, \frac{\pi}{m}\right)\cup  \left(\frac{\pi}{m}, \frac{\pi}{m-1}\right). \] 
If $n\geq 5$, and if $\Om$ has no even angles, then $\vert\isos(\Omega)\vert \leq \binom{n-b}{3}$.  If $n\geq 6$ and if $\Om$ has one even angle, then $\vert\isos(\Omega) \vert\leq \binom{n-1-b}{2-b}$.  This result also holds when $n=5$ provided that $b\leq 1$.  
\end{prop}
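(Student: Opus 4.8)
The plan is to upgrade the counting argument behind Theorem~\ref{thm:upper bound} by using the signed invariant $\pm\pmc(\Omega)$ in place of $\pmca(\Omega)$, the gain coming entirely from the sign of $c$ at the $b$ angles lying in $B^+$. Since the characteristic polynomial is a Steklov spectral invariant (Theorem~\ref{poly is spec invar}) and distinguishes admissible from non-admissible polygons (Proposition~\ref{prop:vertices}), every $\Omega'\in\isos(\Omega)$ is an admissible convex $n$-gon with the same characteristic polynomial as $\Omega$. Thus Theorem~\ref{th:ss_ngons}(b) (no even angle) and Corollary~\ref{cor:excep} together with Remark~\ref{rem: rem on thm} (one even angle) apply: after a choice of orientation and cyclic labelling one has $\bell(\Omega)=\bell(\Omega')$ and a \emph{global} sign $\epsilon\in\{\pm1\}$ with $c(\alpha_j')=\epsilon\,c(\alpha_j)$ for all $j$ (in the one-even case, for all non-even $j$, the even entry being unconstrained beyond $|c|=1$). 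I would first record two elementary convexity facts: a convex $n$-gon has at most three non-obtuse angles and at most two angles in $(0,\frac{\pi}{3})$. Since the description of $B^+$ in the statement shows $B^+\subset(0,\frac{\pi}{3})$, the second fact already gives $b\le 2$, and $c<0$ on the obtuse range, because $\frac{\pi^2}{2\alpha}\in(\frac{\pi}{2},\pi)$ there.

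The first real step is to fix the global sign. Every convex $n$-gon has at least $n-3$ obtuse angles, and on these $c<0$; so if $\epsilon=-1$, each obtuse $\alpha_j$ forces $c(\alpha_j')=-c(\alpha_j)>0$, hence $\alpha_j'\in B^+\subset(0,\frac{\pi}{3})$. Then $\Omega'$ would have at least $n-3$ angles below $\frac{\pi}{3}$, contradicting the bound of two such angles as soon as $n-3>2$. Hence $\epsilon=+1$ whenever $n\ge 6$. This disposes of the one-even statement's range $n\ge 6$ and of the no-even statement for $n\ge 6$; the boundary case $n=5$, where $\epsilon=-1$ is not excluded, requires separate treatment and is the main obstacle (discussed at the end).

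With $\epsilon=+1$ the decisive structural point is immediate: $c(\alpha_j')=c(\alpha_j)$ has the \emph{same} sign, so $\alpha_j'\in B^+$ if and only if $\alpha_j\in B^+$. Thus $\Omega$ and $\Omega'$ carry their $B^+$ angles in the same $b$ positions, and at each such position $\alpha_j'$ is non-obtuse (indeed $<\frac{\pi}{3}$); by Lemma~\ref{lem: |c|}(c) the value $|c|(\alpha_j')$ does \emph{not} determine such an angle, since injectivity of $|c|$ holds only on the obtuse range. These $b$ positions (and, in the one-even case, the even vertex, whose angle is likewise not pinned by $|c|=1$) must therefore be among the at-most-three angles we leave blank when invoking Lemma~\ref{lem:n-3}. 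Every remaining non-$B^+$ position has $c<0$; declaring it obtuse determines its angle uniquely via Lemma~\ref{lem: |c|}(c), and Lemma~\ref{lem:n-3} then reconstructs the blank angles, and hence $\Omega'$ up to congruence, from $\bell$ and the obtuse angles. Each assignment of blank positions thus yields at most one convex $n$-gon once closure and the $c$-constraints at the blanks are imposed.

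The count is then bookkeeping. Under $\epsilon=+1$ the three blank slots must contain the $b$ forced $B^+$ positions, leaving $3-b$ slots to be chosen among the $n-b$ non-$B^+$ positions, so $\vert\isos(\Omega)\vert\le\binom{n-b}{3-b}$; since $\binom{n-b}{3-b}\le\binom{n-b}{3}$ for $n\ge 6$, this yields the asserted no-even bound in that range. In the one-even case one blank is the even vertex, leaving $2-b$ slots among the $n-1-b$ non-even non-$B^+$ positions, which is exactly $\binom{n-1-b}{2-b}$. I would then verify that every $\Omega'\in\isos(\Omega)$ is realized by at least one admissible choice of blanks — any three (respectively two) positions containing all of its finitely many non-obtuse non-$B^+$ positions — so these are genuine upper bounds. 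The remaining, and I expect hardest, step is the sign $\epsilon=-1$ at $n=5$: there the obtuse angles of $\Omega$ are driven into $B^+$ in $\Omega'$, so the $B^+$ positions no longer match and the correspondence above fails. This case must be handled directly, using the angle-sum identity $\sum_j\alpha_j'=(n-2)\pi$ together with the ``at most two angles below $\frac{\pi}{3}$'' constraint to show either that such $\Omega'$ cannot occur or that the extra possibilities still fit within $\binom{n-b}{3}$; this is precisely why the one-even statement is restricted to $n\ge 6$, or to $n=5$ with $b\le 1$.
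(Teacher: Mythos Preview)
Your approach is essentially the paper's: resolve the global sign in $\pm\pmc(\Omega)$, observe that the $b$ positions with $c>0$ cannot be obtuse, and redo the count of Theorem~\ref{thm:upper bound}. The structural steps are fine. The genuine gap is at $n=5$, which you explicitly leave open. Your sign argument uses only that $\Omega$ has at least $n-3$ obtuse angles, hence at least $n-3$ negative entries in $\pmc(\Omega)$; flipping the sign would then force at least $n-3$ positive entries (i.e.\ angles in $B^+$) in $\Omega'$, contradicting $b'\le 2$ only when $n-3>2$. This is weaker than necessary and is why $n=5$ escapes.

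The paper closes this gap with a one-line sharpening: since $\Omega$ is admissible (no odd angles, so no zero entries) and has no even angles (so no $\pm 1$ entries), \emph{every} entry of $\pmc(\Omega)$ lies in $(-1,0)\cup(0,1)$. Thus the number of positive entries is \emph{exactly} $b$ and the number of negative entries is exactly $n-b$. For $n\ge 5$ and $b\le 2$ one has $n-b\ge 3>2\ge b$, so the vector with more negative than positive entries is singled out and the sign of $\pm\pmc(\Omega)$ is determined already at $n=5$. The same majority argument handles the one-even case at $n=5$ with $b\le 1$: among the $n-1=4$ non-even entries, at most one is positive and at least three are negative, which again breaks the sign ambiguity; when $b=2$ the count is $2$ versus $2$ and the argument genuinely fails, explaining the restriction. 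With this in hand there is no residual ``$\epsilon=-1$'' case to analyze, and the counting proceeds exactly as you describe.
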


\begin{proof}
We first make some general observations.  The fact that all elements of $B^+$ are less than $\frac{\pi}{3}$ implies that $b \leq 2$. Moreover, if $\Omega$ has an even angle less than $\frac{\pi}{2}$, then $b \leq 1$.  If $\Om$ either has two even angles whose sum is less than $\frac{3\pi}{4}$, or if $\Om$ has three even angles, then $b=0$. 

We now assume $n \geq 5$, and $\Om$ has no even angles.  Since $\Omega$, being admissible, has no odd angles, and has no even angles, all the entries of $\pmc(\Om)$ lie in $(-1,0) \cup (0,1)$.  Thus $b$ is precisely the number of positive entries in $\pmc(\Om)$.  Since $n\geq 5$, and $b\leq 2$, the number of negative entries must exceed the number of positive entries and thus knowledge of $\pm\pmc(\Om)$ uniquely determines $\pmc(\Om)$. For any obtuse angle $\alpha_j$, the corresponding entry $c(\alpha_j)$ is negative.   Thus in the proof of Theorem~\ref{thm:upper bound}(a), we may replace $\binom{n}{3} $ by $\binom{n-b}{3}$.

Next we assume that $n \geq 6$, and $\Om$ has one even angle.  Following the notation in the proof of Theorem~\ref{thm:upper bound}(b), we need to count the possible ways $n-3$ obtuse angles may be distributed among $\alpha_1,\dots, \alpha_{n-1}$.  Since $\Om$ has only one even angle, an argument analogous to the preceding case allows us to determine the sign of the spectral invariant $\pm\pmc(\y)$ and then to narrow the candidates down to $n-1-b$, from which we must choose $n-3$.   Thus $\vert\isos(\Omega)\vert \leq \binom{n-1-b}{n-3}=\binom{n-1-b}{2-b}$.
\end{proof}

\section{Spectral finiteness results for some classes of weakly admissible polygons}\label{sec: finiteness}

Recall that admissibility of an $n$-gon $\Om$ with all interior angles in $(0,\pi)$ says both that the edge lengths are incommensurable over $\{-1,0,1\}$ and that there are no odd angles.   In this section we obtain spectral finiteness results for convex $n$-gons satisfying significantly weaker hypotheses.  

\begin{defn}\label{def: omega reduced} 
Let $\Om$ be a convex $n$-gon.
 \begin{enumerate} 
  \item[(a)]
 Let $k$ be the number of odd interior angles in $\Om$.  If $k=0$, set $\omred:=\Om$.  If $k=1$ or 2, let $\omred$ be a curvilinear $(n-k)$-gon obtained by ``removing'' the vertices where the odd angles occur.   More precisely, if $\alpha_j$ is an odd angle and $\ell_j$ and $\ell_{j+1}$ are the lengths of the two edges that meet at the vertex with angle $\alpha_j$, then replace the two edges by a single smooth curve of length $\ell_j+\ell_{j+1}$, being careful not to affect the adjacent vertex angles $\alpha_{j-1}$ and $\alpha_{j+1}$.   If there are two odd angles, repeat the process.   In particular, if odd angles $\alpha_{j-1}$ and $\alpha_j$ occur at adjacent vertices of $\Om$, then the three edges incident to these two vertices are replaced by a single smooth curve of length $\ell_{j-1}+\ell_j+\ell_{j+1}$.  The only convex polygons with more than two odd angles are equilateral triangles.   In this case, $\omred$ is a smooth simply-connected domain, and the characteristic polynomial of $\omred$ is defined as in Remark~\ref{rem: smooth}. We refer to $\omred$ as the reduced curvilinear $(n-k)$-gon associated with $\Om$.   
   
 \item[(b)] We say that a convex $n$-gon is \emph{weakly edge-admissible} if the edge lengths of $\omred$ are incommensurable over $\{-1,0,1\}$. Observe that incommensurability of the edge lengths of $\Om$ over $\{-1,0,1\}$ implies that $\Om$ is weakly edge-admissible.
\end{enumerate}
\end{defn}

\begin{remark}\label{rem:wd}~
We note that $\omred$ is well-defined only up to the choice of the smooth curves replacing the pairs of edges that meet at an odd angle.  In what follows, the choice of curves will not matter.   What will be important are the lengths of these smooth curves and the fact that they are not straight line segments.  The latter distinguishes them from the other edges of $\omred$.  
\end{remark}

 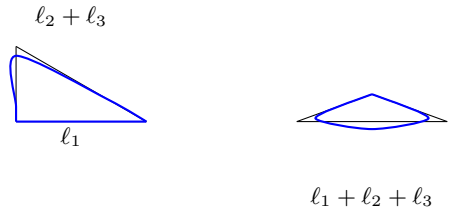
\begin{figure}[h] \centering 
\begin{tikzpicture}
\draw   (-1.732,0) --  (-1.732,1);
\draw  (-1.732, 1) -- (0, 0); 
\draw (0, 0) -- (-1.732, 0); 
\node at (-1, -0.2) {\small $\ell_1$}; 
\node at (-1, 1.4) {\small $\ell_2+\ell_3$}; 
\draw[thick, blue] plot [smooth] coordinates {(-1.732, 0) (-1.732, 0.2) (-1.732, 0.875) (-0.5, 0.2887) (0, 0)};
\draw[thick, blue] (0,0) -- (-1.732,0);

\draw (2,0) -- (4,0); 
\draw (2,0) -- (3, 0.364); 
\draw (3, 0.364) -- (4,0);  

\draw[thick, blue] (3,0.364) -- (2.75,0.273);
\draw[thick, blue] plot [smooth] coordinates {(2.75,0.273) (2.25, 0.04) (3,-0.1) (3.75,0.04) (3.25,0.273)};
\draw[thick, blue] (3.25,0.273) -- (3,0.364);

\node at (3, -1) {\small $\ell_1+\ell_2+\ell_3$}; 

\end{tikzpicture}
\caption{On the left, a $30^\circ \ndash 60^\circ \ndash 90^\circ$ triangle, having one odd angle, is shown together with its associated reduced curvilinear $2$-gon in blue.  On the right, a triangle with two odd angles each measuring $\frac \pi 9$ is shown together with its associated reduced curvilinear $1$-gon in blue.  }
\label{fig:omegared}
\end{figure}

Large classes of polygons are weakly edge-admissible.  In particular, triangles with one odd angle are weakly edge-admissible by the triangle inequality.  If a triangle has $2$ or $3$ odd angles, then the only edge length of $\omred$ is its perimeter; with $3$ odd angles, its reduced curvilinear polygon is a smoothly bounded domain. Examples of triangles with one and two odd angles and their associated reduced curvilinear polygons are shown in Figure \ref{fig:omegared}.  
In addition to triangles, every convex quadrilateral $\Om$ that has two adjacent odd interior angles is necessarily weakly edge-admissible.  Indeed, suppose angles $\alpha_2$ and $\alpha_3$ are odd.    Then $\omred$ has only two edges of lengths $\ell_1':=\ell_1$ and $\ell_2':=\ell_2+\ell_3+\ell_4$, respectively, where the $\ell_j$'s are the edge lengths of $\Om$.  Since all edges have positive length and since we necessarily have $\ell_1<\ell_2+\ell_3+\ell_4$, the set $\{\ell_1',\ell_2'\}$ is incommensurable over $\{-1,0,1\}$.   

\begin{lemma}\label{om vs omred} We use the notation of Definition~\ref{def: omega reduced}.   Let $\Om$ be a weakly edge-admissible convex $n$-gon.  Let $k$ be the number of odd interior angles in $\Om$.   Then:  
\begin{enumerate}
\item[(a)] $\omred$ is either an admissible curvilinear $(n-k)$-gon or a domain with smooth boundary if $n=k=3$;
\item[(b)] 
The characteristic polynomials of $\Om$ and $\omred$ are identical except possibly for a change in the sign of the constant term.  The sign will depend on the parity of the odd angles in the sense of Definition~\ref{def:generic}.
\item[(c)] If $\Om$ is not an equilateral triangle, the characteristic polynomial of $\Om$
determines $\pmca(\omred)$ and $\bell(\omred)$ up to possible permutations of the entries. Moreover, unless $\Om$ has more than one even angle, the characteristic polynomial of $\Om$ determines $\pmca(\omred)$ and $\bell(\omred)$ uniquely (modulo the choice of boundary orientation and cyclic labelling).  

\end{enumerate}
\end{lemma}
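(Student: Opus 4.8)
The plan is to handle (a)--(c) in order, with the term-by-term computation in (b) carrying most of the weight and (c) following by combining (a), (b), and Theorem~\ref{th:ss_ngons}.

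For (a), I would first observe that the vertices deleted in forming $\omred$ are exactly the odd ones, and that the merging operation neither alters the retained vertex angles $\alpha_{j\pm1}$ nor creates new vertices. Hence the vertices of $\omred$ are precisely the non-odd vertices of $\Om$, so $\omred$ has no odd angle; this is condition (2) of admissibility. Condition (1), incommensurability of the edge lengths of $\omred$ over $\{-1,0,1\}$, is exactly the hypothesis of weak edge-admissibility. When $\Om$ is the equilateral triangle, $n=k=3$, every vertex is removed and $\omred$ is a smooth domain, handled via the convention of Remark~\ref{rem: smooth}.

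For (b), I would compare the two characteristic polynomials directly. An odd angle $\alpha_j$ has $c(\alpha_j)=0$, so in the sum defining $P_\Om$ the coefficient $a_\bxi$ vanishes unless $\xi_j=\xi_{j+1}$ at every odd vertex $j$. The key step is to set up a bijection between the \emph{surviving} sign patterns $\bxi\in\{\pm1\}^n$ (those constant across each odd vertex) and the full set $\{\pm1\}^{n-k}$ of patterns for $\omred$: each odd vertex forces $\xi_j=\xi_{j+1}$, and the two edges $\ell_j,\ell_{j+1}$ it separates merge into the single $\omred$-edge of length $\ell_j+\ell_{j+1}$ carrying the common sign (when two odd vertices are adjacent, three edges merge and the three signs are forced equal). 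Under this bijection the frequency is preserved, since $\xi_j\ell_j+\xi_{j+1}\ell_{j+1}=\xi_j(\ell_j+\ell_{j+1})$, and the coefficient is preserved, since the sign-change set of a surviving $\bxi$ avoids every odd vertex while the sign changes at the retained flanking vertices are unaffected by the merge. Thus the cosine parts of $P_\Om$ and $P_{\omred}$ coincide exactly. (Weak edge-admissibility makes all surviving frequencies nonzero and distinct, so no cosine term is absorbed into the constant term.) The constant terms are $-\prod_{j=1}^n\sin\!\left(\tfrac{\pi^2}{2\alpha_j}\right)$ and $-\prod_{\text{vertices of }\omred}\sin\!\left(\tfrac{\pi^2}{2\alpha_j}\right)$, and they differ exactly by the factors $\sin\!\left(\tfrac{\pi^2}{2\alpha_j}\right)=(-1)^{m_j}$ coming from the removed odd angles $\alpha_j=\tfrac{\pi}{2m_j+1}$, i.e. by the product of their parities, which is $\pm1$. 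This proves (b).

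For (c), by (a) the polygon $\omred$ is admissible, and by (b) the characteristic polynomial of $\Om$ determines the cosine part of $P_{\omred}$ exactly, the unknown sign of the constant term being the only discrepancy. Since both $\bell(\omred)$ and $\pmca(\omred)$ are encoded entirely in the frequencies and coefficients of the cosine part---the perimeter appearing as the top frequency with coefficient $1$, and the $|c|$-values being recoverable from products of the two-sign-change coefficients by the admissibility analysis underlying Theorem~\ref{th:ss_ngons}---the sign of the constant term is irrelevant to their recovery, and I would apply Theorem~\ref{th:ss_ngons} to $\omred$ to obtain $\bell(\omred)$ and $\pmca(\omred)$ up to the intrinsic ambiguities of that theorem. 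To pin down the dichotomy, I would note that merging does not touch the even vertices, so $\Om$ and $\omred$ have the same number of even angles: if this number is $0$, then Theorem~\ref{th:ss_ngons}(b) gives the data uniquely up to orientation and cyclic labelling; if it is $1$, then there is a single exceptional component whose reorientation is trivial by Remark~\ref{rem: rem on thm}, again giving uniqueness; and if it is $\geq2$, the exceptional components may be reordered and reoriented independently, leaving only determination up to permutation of entries. I expect the main obstacle to be the bookkeeping in (b): verifying that the bijection simultaneously preserves frequencies and coefficients, in particular checking that the sign-change pattern at the two retained vertices flanking a merged edge agrees on both sides and that the adjacent-odd-angle case collapses correctly.
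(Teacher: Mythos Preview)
Your proposal is correct and follows essentially the same route as the paper's proof: (a) is immediate from the definitions, (b) uses the vanishing $c(\alpha_j)=0$ at odd vertices to identify the surviving terms of $P_\Om$ with those of $P_{\omred}$ and compares the constant terms via the $\sin$-product, and (c) combines (a), (b), Theorem~\ref{th:ss_ngons}, and Remark~\ref{rem: rem on thm}. Your version is simply more explicit in writing out the bijection in (b) and the even-angle case split in (c), whereas the paper calls these steps ``immediate'' and ``straightforward.''
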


We have excluded equilateral triangles in part (c) only because we have not defined $\pmca(\omred)$ when $\omred$ has smooth boundary.

\begin{proof}
(a) is immediate from Definitions~\ref{def:generic} and \ref{def: omega reduced}.   

(b) Under the hypothesis of weak edge-admissibility, it is straightforward to see from Definition~\ref{def: char poly} that the non-constant terms of the characteristic polynomials of $\Om$ and $\omred$ are identical, since $c(\alpha_j)=0$ when $\alpha_j$ is odd.   Moreover weak edge-admissibility implies that the constant term in the characteristic polynomial of $\Om$ is given by $\prod_{j=1}^n\,\sin\left(\frac{\pi^2}{2\alpha_j}\right).$  Any odd angles contribute a factor of $\pm 1$ to this product, while the product of the remaining factors yields the constant term in the characteristic polynomial of $\omred$.   

(c) Observe that for any admissible curvilinear polygon $\Sigma$, the data $\pmc(\Sigma)$ and $\bell(\Sigma)$ are independent of the sign of the constant term in the characteristic polynomial of $\Sigma$.   We can now apply parts (a) and (b) along with Theorem~\ref{th:ss_ngons} and Remark~\ref{rem: rem on thm} to complete the proof.
\end{proof}

Before addressing spectral finiteness, we observe the following consequence of Lemma~\ref{om vs omred}:

\begin{prop}\label{prop:rational}
Let $\Om_1$ and $\Om_2$ be weakly edge-admissible convex $n$-gons that have the same characteristic polynomial. If all angles of $\Om_1$ are rational multiples of $\pi$, then the same is true for all angles of $\Om_2$.
    \end{prop}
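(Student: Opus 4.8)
The plan is to reduce the statement for $\Om_1$ and $\Om_2$ to the corresponding statement for their reduced curvilinear polygons $\Om_1^{\operatorname{red}}$ and $\Om_2^{\operatorname{red}}$, and then to invoke the computation already carried out in Remark~\ref{rem:rat}. The key structural fact is Lemma~\ref{om vs omred}(a)--(c): since $\Om_1$ and $\Om_2$ are weakly edge-admissible and share a characteristic polynomial, their reduced polygons $\Om_1^{\operatorname{red}}$ and $\Om_2^{\operatorname{red}}$ are admissible curvilinear polygons (or smooth domains in the equilateral-triangle case), and by part~(b) they have characteristic polynomials agreeing up to the sign of the constant term. First I would dispose of the rationality question for the \emph{odd} angles of $\Om_2$ trivially: odd angles are of the form $\frac{\pi}{2k+1}$ and are therefore automatically rational multiples of $\pi$, so they pose no obstacle and may be set aside.

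Next I would handle the angles surviving into the reduced polygons. Since all angles of $\Om_1$ are rational multiples of $\pi$, in particular the non-odd angles are, and these are precisely the angles recorded in $\balpha(\Om_1^{\operatorname{red}})$. The goal is to show the angles of $\Om_2^{\operatorname{red}}$ are likewise rational multiples of $\pi$. Applying Lemma~\ref{om vs omred}(c), the characteristic polynomial of $\Om_1$ (equivalently of $\Om_2$, up to the constant-term sign, which is irrelevant to $\pmca$) determines $\pmca(\Om^{\operatorname{red}})$ up to permutation of entries. Hence, after reordering, $|c(\alpha_j)| = |c(\alpha_j')|$ for each surviving angle $\alpha_j$ of $\Om_1^{\operatorname{red}}$ and the matched angle $\alpha_j'$ of $\Om_2^{\operatorname{red}}$. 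This is exactly the input used in Remark~\ref{rem:rat}: writing $\alpha_j = q_j\pi$ with $q_j\in\mathbb{Q}$, the equality $|c(\alpha_j)|=|c(\alpha_j')|$ forces $\frac{\pi^2}{2\alpha_j'} = k\pi \pm \frac{\pi^2}{2\alpha_j}$ for some $k\in\Z$, which upon solving yields $\alpha_j' = \frac{q_j\pi}{2kq_j \pm 1}$, a rational multiple of $\pi$. The equilateral-triangle exception of part~(c) is harmless, since an equilateral triangle has all angles equal to $\frac{\pi}{3}$, automatically rational.

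The final bookkeeping step is to conclude that \emph{all} angles of $\Om_2$ are rational multiples of $\pi$, not merely those surviving into $\Om_2^{\operatorname{red}}$. The angles of $\Om_2$ are exactly its odd angles together with the angles of $\Om_2^{\operatorname{red}}$; the former are rational by definition and the latter were just shown to be rational. I expect the main obstacle to be purely one of careful matching rather than genuine difficulty: one must verify that the permutation correspondence provided by Lemma~\ref{om vs omred}(c) actually pairs each rational angle of $\Om_1^{\operatorname{red}}$ with a unique angle of $\Om_2^{\operatorname{red}}$, and that the $|c|$-equality survives the ``up to permutation'' ambiguity. In the sub-case where $\Om_1$ has two even angles, part~(c) only determines $\pmca(\Om^{\operatorname{red}})$ up to permutation rather than uniquely, so I would emphasize that we only need the \emph{multiset} equality of the $|c|$-values, which is preserved under any permutation and suffices to run the Remark~\ref{rem:rat} argument entry-by-entry.
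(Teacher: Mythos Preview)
Your proposal is correct and follows essentially the same route as the paper's proof: apply Lemma~\ref{om vs omred}(c) to obtain multiset equality of the $|c|$-values for the reduced polygons, invoke the computation of Remark~\ref{rem:rat} entry-by-entry, and then observe that the remaining (odd) angles of $\Om_2$ are rational by definition. The paper compresses this into two sentences, while you have spelled out the bookkeeping (permutation ambiguity, equilateral-triangle exception) that the paper leaves implicit.
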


    \begin{proof}  Applying Remark~\ref{rem:rat} along with Lemma~\ref{om vs omred}(c), we see that all angles of $\omred_2$ are rational multiples of $\pi$.   The only remaining angles of $\omred_2$ are odd angles, which are necessarily rational multiples of $\pi$.
    \end{proof}
    
\begin{thm}\label{thm: not two adjacent odd}  Let $\cp$ be the set of all weakly edge-admissible convex polygons; moreover, assume that if the polygon contains two odd angles, then they are adjacent.  Let $S$ be any subset of $\cp$ consisting of congruence classes of convex polygons that have the same characteristic polynomial and that share a common lower bound on their $k$th perimeter-normalized Steklov eigenvalue for some $k\in \Z^+$.  Then $S$ is finite.  In particular, any set of mutually Steklov isospectral elements of $\cp$ is finite.
\end{thm}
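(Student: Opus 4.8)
The plan is to combine three facts: the characteristic polynomial is a Steklov spectral invariant (Theorem~\ref{poly is spec invar}); the normalized eigenvalue bound of Corollary~\ref{cor:awesome2} forces a uniform positive lower bound on all interior angles; and the level sets of $|c|$ are discrete away from $0$ (Lemma~\ref{lem: |c|}(d)), which, combined with Lemma~\ref{om vs omred}, pins down the reduced polygon up to finitely many possibilities. Write $P$ for the common characteristic polynomial of the elements of $S$, let $k\in\Z^+$ be the eigenvalue index in the hypothesis, and let $c>0$ be a common lower bound for $\sigma_k(\Om)L(\pa\Om)$ over $\Om\in S$.

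First I would bound the number of vertices. For $\Om\in\cp$ with $r$ odd angles, Lemma~\ref{om vs omred} gives that $\omred$ is an admissible curvilinear $(n-r)$-gon (or, for an equilateral triangle, a smooth domain) whose characteristic polynomial agrees with $P$ except possibly in the sign of the constant term. Hence the nonconstant part of $P$ carries exactly $2^{\,n-r-1}$ linearly independent cosine terms (a single term if $\omred$ is a monogon or is smooth), so $P$ determines $m:=n-r$; since a convex polygon has at most three odd angles, $n=m+r\le m+3$ is bounded over all of $S$. The equilateral triangle is the only case with $r=3$; it satisfies $P(t)=\cos(\ell t)+1$ and is determined up to congruence by its perimeter, so I set that degenerate subcase aside. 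With $n$ confined to finitely many values, Corollary~\ref{cor:awesome2} applied to $\sigma_k(\Om)L(\pa\Om)\ge c$ produces a single constant $\theta_0>0$, depending only on $c$, $k$ and these values of $n$, such that every interior angle of every $\Om\in S$ satisfies $\alpha_j\ge\theta_0$.

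Next I would exhibit only finitely many candidate angle configurations. By Lemma~\ref{om vs omred}(c), $P$ determines $\bell(\omred)$ and $\pmca(\omred)$ up to finitely many permutations of their entries. The non-odd angles of $\Om$ are exactly the angles of $\omred$, so for each such angle $\beta$ the value $|c|(\beta)$ is known; since $\beta\ge\theta_0$, Lemma~\ref{lem: |c|}(d) shows that $|c|^{-1}(\{|c|(\beta)\})\cap[\theta_0,\pi)$ is finite, so $\beta$ ranges over a finite set. Each odd angle equals $\tfrac{\pi}{2j+1}\ge\theta_0$, which again allows only finitely many values. Finally, there are finitely many ways to place the $r\le 2$ odd vertices among the edges of $\omred$ and to assign the known lengths and $|c|$-values to positions; by the adjacency hypothesis any two odd angles are consecutive, so exactly one edge of $\omred$ is subdivided. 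Altogether the combinatorial arrangement, the full angle vector $\balpha$, and the lengths of the unsubdivided edges of any $\Om\in S$ lie in an explicitly finite set.

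It remains to show that each such configuration determines at most one congruence class, which I expect to be the crux. Once $\balpha$ is fixed, the unit directions $\vec u_i$ of all edges of $\Om$ are determined, and the only unknown lengths are those of the $r+1$ consecutive straight segments into which the single subdivided edge of $\omred$ breaks. They must satisfy the two scalar closing relations $\sum_i\ell_i\vec u_i=-\vec v_0$, with $\vec v_0$ the fixed contribution of the already-known edges, and the relation $\sum_i\ell_i=L$ fixing their total to the known length $L$ of the subdivided edge. When $r=1$ there are two unknowns and the two segment directions are independent (the intervening odd angle lies in $(0,\pi)$), so the closing relations alone determine the lengths. When $r=2$ the three unknown lengths solve the $3\times3$ system whose coefficient rows are $\vec u_j,\vec u_{j+1},\vec u_{j+2}$ and $(1,1,1)$; its determinant is twice the signed area of the triangle spanned by these three unit vectors, and since the two intervening odd angles lie in $(0,\pi)$ and $\Om$ is convex the three directions are pairwise distinct points on the unit circle, hence noncollinear, so the determinant is nonzero. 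In every case the configuration recovers $\Om$ up to congruence, whence $|S|<\infty$. For the final statement, mutually Steklov isospectral elements of $\cp$ share both the invariant $P$ (Theorem~\ref{poly is spec invar}) and the common value $\sigma_k(\Om)L(\pa\Om)$ (equal spectra force equal eigenvalues and equal perimeters), so they form such a set $S$ and their finiteness is the special case just proved.
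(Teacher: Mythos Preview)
Your proof is correct and follows the same overall architecture as the paper's: bound $n$ via the number of cosine terms in $P$, invoke Corollary~\ref{cor:awesome2} to force a uniform lower bound $\theta_0$ on all interior angles, and then use Lemma~\ref{lem: |c|}(d) together with Lemma~\ref{om vs omred} to reduce to finitely many candidate configurations of angles, edge lengths, and odd-vertex placements.

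The genuine difference lies in the final uniqueness step. The paper argues geometrically: it cuts $\Om$ along the diagonal $v_{n-k}v_1$ into a convex $(n-k)$-gon $\Om'$ (fully determined by the known edges and angles) and a small piece $\Om''$ which is a triangle when $r=1$ (recovered by ASA) or a quadrilateral when $r=2$ (recovered by a separate lemma, Lemma~\ref{lem:Euc geom}, stating that a convex quadrilateral is determined by its four angles, one side length, and its perimeter). Your route via the linear closing relations $\sum_i \ell_i \vec u_i = 0$ plus the sum constraint $\sum \ell_i = L$ is more uniform: for $r=1$ the two-by-two system is nonsingular because consecutive edge directions at an angle in $(0,\pi)$ are independent; for $r=2$ the three-by-three determinant equals twice the signed area of the triangle with vertices $\vec u_j,\vec u_{j+1},\vec u_{j+2}$, and three distinct points on the unit circle are never collinear. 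This is essentially the method the paper reserves for Lemma~\ref{lem:vectors} in the subsequent (non-adjacent odd-angle) discussion, so you have in effect unified the two arguments. The paper's splitting approach has the virtue of being pictorial and of yielding Lemma~\ref{lem:Euc geom} as a standalone fact; your approach avoids the case split on $n$ and the auxiliary quadrilateral lemma altogether. One cosmetic remark: your description of the $3\times 3$ system (``rows are $\vec u_j,\vec u_{j+1},\vec u_{j+2}$ and $(1,1,1)$'') lists four objects; what you mean is the matrix whose columns are the affine lifts $(\vec u_i,1)^T$.
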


\begin{figure}[h] \centering 
\begin{tikzpicture}
\draw  (-3, 0) -- (1, 0);
\draw (-3, 0) -- (-5, 2); 
\draw[dashed] (-5, 2) -- (-6, 3); 
\node at (-6, 2.5) {$R$}; 
\draw (1, 0) -- (3, 1); 
\draw (-5, 2) -- (3, 1); 
\draw[dashed] (3, 1) -- (4.4, 1.7);
\draw[dashed] (-6,3) -- (4.4,1.7);

\node at (-2.8, 0.2) {\small $\theta_1$}; 
\node at (-3, -0.2) {\small{$(0,0)$}};
\node at (0.7, 0.2) {\small $\theta_2$};
\node at (0.9, -0.2){\small $(\ell, 0)$}; 
\node at (-1, -0.2) {\small{$I$}}; 

\node at (4.3, 1.2) {$R'$};
\node at (-4.3, 1.7) {\small{$\theta_4$}};
\node at (2.1, 0.8) {\small{$\theta_3$}}; 

\end{tikzpicture}
\caption{A convex quadrilateral is shown here with fixed interior angles. Connecting points on the rays $R$ and $R'$ by lines parallel to the side of the quadrilateral connecting the upper vertices generates a family of quadrilaterals with the same interior angles.}
\label{fig:quad_parallel}
\end{figure}
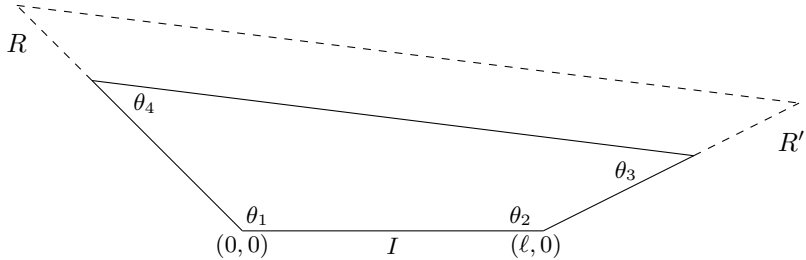

To prove Theorem \ref{thm: not two adjacent odd} we require the following geometric lemma.
\begin{lemma}\label{lem:Euc geom}
A convex quadrilateral is uniquely determined up to congruence by its four labeled angles, one labeled side length, and its perimeter.
\end{lemma}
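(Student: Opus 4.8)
The plan is to convert this congruence statement into a short piece of linear algebra about the side-length vector of the quadrilateral, exploiting the fact that the four labeled angles rigidly prescribe the \emph{directions} of all four edges. First I would normalize the position: place the labeled side along the segment $[0,\ell]$ on the $x$-axis with the interior of the quadrilateral in the upper half-plane and with the two endpoints carrying the two prescribed (labeled) angles at that side. Reading off the exterior angles $\pi-\theta_i$ at the successive vertices then determines the direction $\phi_i$, and hence the unit vector $\mathbf e_i=(\cos\phi_i,\sin\phi_i)$, of each edge $e_i$. Because the $\theta_i$ are convex angles summing to $2\pi$, the partial sums of exterior angles are strictly increasing with total turn $2\pi$, so $\mathbf e_1,\dots,\mathbf e_4$ are four \emph{distinct} directions spread around the unit circle. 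The key observation is that two convex quadrilaterals sharing the labeled data, once placed in this normalized way, have \emph{identical} edge directions; so proving congruence reduces to proving that the four side lengths are determined.

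Next I would encode the quadrilateral by its vector of side lengths $(\ell_1,\ell_2,\ell_3,\ell_4)$, subject to the closing condition $\sum_{i=1}^4 \ell_i\mathbf e_i=0$ (the edges must return to the starting vertex). This is a homogeneous $2\times 4$ linear system, and since the $\mathbf e_i$ span $\mathbb R^2$, its solution space $K$ has dimension $2$. The two remaining pieces of data are the linear functionals $(\ell_1,\dots,\ell_4)\mapsto \ell_1$ (the labeled side length) and $(\ell_1,\dots,\ell_4)\mapsto \ell_1+\ell_2+\ell_3+\ell_4$ (the perimeter). I would show that the resulting map $\Phi\colon K\to\mathbb R^2$ is injective, and hence, by equality of dimensions, an isomorphism. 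Injectivity is the crux: if $\Phi$ annihilates some $(\ell_1,\dots,\ell_4)\in K$, then $\ell_1=0$ and $\ell_2+\ell_3+\ell_4=0$, while the closing condition reduces to $\ell_2\mathbf e_2+\ell_3\mathbf e_3+\ell_4\mathbf e_4=0$. This is a homogeneous $3\times 3$ system whose coefficient matrix has columns $(\cos\phi_j,\sin\phi_j,1)$ for $j=2,3,4$, and whose determinant is twice the signed area of the triangle with vertices $\mathbf e_2,\mathbf e_3,\mathbf e_4$. Since these are three \emph{distinct} points on the unit circle they are not collinear, the determinant is nonzero, and so $\ell_2=\ell_3=\ell_4=0$; thus $\Phi$ is injective.

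With $\Phi$ an isomorphism, the prescribed values of the labeled side length and the perimeter single out exactly one element of $K$, so the two quadrilaterals must have the same side-length vector; having both the same edge directions and the same edge lengths in the normalized placement, they have the same vertices and are therefore congruent. The \textbf{main obstacle}, and the only genuinely non-formal step, is the injectivity of $\Phi$, i.e. the non-vanishing of that $3\times 3$ determinant. This is precisely the geometric content visible in Figure~\ref{fig:quad_parallel}: fixing the angles leaves a one-parameter family of quadrilaterals obtained by sliding the top edge parallel to itself, and the determinant being nonzero is equivalent to the perimeter varying \emph{strictly monotonically} along this family, so that the perimeter constraint selects a unique member. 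A secondary point to verify is the legitimacy of the normalized placement: the labeling of the angles and of the side fixes the configuration up to reflection across the $x$-axis, which is itself a congruence, so no generality is lost.
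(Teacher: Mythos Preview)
Your proof is correct and takes a genuinely different route from the paper's. The paper argues synthetically: it fixes the known side $I$ on the $x$-axis, notes that the two adjacent sides lie on determined rays $R,R'$, and observes that the admissible positions of the fourth side form a one-parameter family of mutually parallel segments sliding away from $I$; since the perimeter is strictly increasing along this family, the perimeter constraint picks out a unique member. Your argument instead linearizes the problem via the closing condition $\sum_i \ell_i\mathbf e_i=0$, identifies the two-dimensional solution space $K$, and shows that the pair (labeled side length, perimeter) is an isomorphism $K\to\mathbb R^2$ by checking that the $3\times3$ matrix with columns $(\cos\phi_j,\sin\phi_j,1)^T$ for $j=2,3,4$ is nonsingular---which it is, since three distinct points on a circle are never collinear.

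Both approaches are short; the paper's is more visual, while yours is more algorithmic. It is worth noting that your method is precisely the technique the paper itself deploys a few pages later in Lemma~\ref{lem:vectors}, where the unit edge vectors $\ub_j$ and the closing equation~\eqref{eq:vectors} are used to handle the harder case of $n$-gons with two non-adjacent odd angles. So you have in effect anticipated the paper's own machinery and applied it one lemma early; your determinant criterion (three distinct unit vectors) plays the same role there as the non-parallelism of the bisectors $\ub_2-\ub_1$ and $\ub_{m+1}-\ub_m$. The trade-off is that the paper's synthetic proof of the present lemma requires no computation at all, whereas your approach, while slightly longer here, unifies the two lemmas under a single idea.
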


\begin{proof}[Proof of Lemma~\ref{lem:Euc geom}]
Let $Q$ be a convex quadrilateral with the given data.  Let $\ell$ be the known side length.  Situate $Q$ in the plane so that the edge with the prescribed length is the interval $I$ on the $x$-axis with endpoints $(0,0)$ and $(\ell, 0)$ and such that $Q$ lies in the closed upper half plane.  There are two edges adjacent to $I$ on the rays $R$ and $R'$ emanating upwards from the endpoints of $I$ at the prescribed angles; the fourth edge of $Q$, which is opposite $I$, must have endpoints on $R$ and $R'$ and make the prescribed angles with these rays.    The assumption that there exists at least one quadrilateral with the given data  guarantees the existence of at least one line segment joining $R$ and $R'$ at the prescribed angles.  Then there exists a continuum of such segments, all mutually parallel as in Figure \ref{fig:quad_parallel}.   Each gives rise to a convex quadrilateral with the prescribed angles and side length.   However, the perimeters of these quadrilaterals strictly increase as the distance from the segment to the $x$-axis increases.  Thus there can be only one such quadrilateral with the prescribed perimeter.    
\end{proof}

\begin{proof}[Proof of Theorem~\ref{thm: not two adjacent odd}]
Equilateral triangles are distinguishable from other elements of $P^{*}$ by the number of cosine terms in their characteristic polynomials (see Lemma~\ref{om vs omred}, Remark~\ref{rem: smooth}, and the observations immediately preceding Proposition~\ref{prop:vertices}). Thus for notational simplicity, we will exclude equilateral triangles in the remainder of the proof. 

Write $$P^{*} =\bigcup_{n=3}^\infty\,\cpn,$$ where $\cpn$ consists of all convex $n$-gons in $\cp$.   For $\Om\in\cp$,  Lemma~\ref{om vs omred} implies that the characteristic polynomial of $\Om\in \cp$ determines the number of vertices in $\omred$.  Since each $\Om\in \cp$ has at most three more vertices than $\omred$, any set $S$ as above can intersect $\cpn$ for at most four values of $n$. To prove finiteness, it thus suffices to fix $n$ and show that each $\Om\in\cpn$ is determined up to finitely many possibilities in $\cpn$ by its characteristic polynomial and a Steklov eigenvalue bound as in the statement of the theorem.

   If $\Om$ has no odd angles, then it is necessarily admissible and we may apply Theorem~\ref{thm:upper bound} to complete the proof.   Thus we assume that $\Om$ has at least one odd angle.  Each of the following are determined up to finitely many possibilities by the characteristic polynomial and the eigenvalue bound:
   
   \begin{enumerate}
   \item[(i)] $\pmca(\omred)$ and $\bell(\omred)$ by Lemma~\ref{om vs omred};
     \item[(ii)] the number of odd angles in $\Om$, since $n$ is fixed and we know the number of angles in $\omred$;
   \item[(iii)] $\balpha(\omred)$ and also the values of the odd angles by (i), Corollary~\ref{cor:awesome2} and Lemma~\ref{lem: |c|};

   \item[(iv)] the location of the odd angles: indeed, all but one of the edges of $\omred$ is a straight line segment, since all odd angles of $\Om$ are assumed to be adjacent.  There are only finitely many choices for this edge and thus for the odd angles.
   \end{enumerate}

To complete the proof of finiteness, it thus suffices to fix a choice of the data (i)--(iv) and show that there is at most one convex $n$-gon with the given data.   The data gives us  \emph{all} the angles $\alpha_1,\dots,\alpha_n$ of $\Om$, the lengths of all the edges that join the non-odd angles, and the sum of the lengths of those edges that are adjacent to odd angles (in particular, the perimeter of $\Om$).

If $n=3$, the angles along with the perimeter determine $\Om$.    Thus we assume $n\geq 4$.   Let $k\in \{1,2\}$ be the number of odd angles of $\Om$.   For notational simplicity, we cyclically relabel the vertices of $\Om$ so that $\alpha_n$, and also $\alpha_{n-1}$ if $k=2$, are the odd angles.   In addition to knowing all the angles of $\Om$, we know $\ell_2,\dots, \ell_{n-k}$ and the perimeter.   
It remains to determine the remaining lengths.

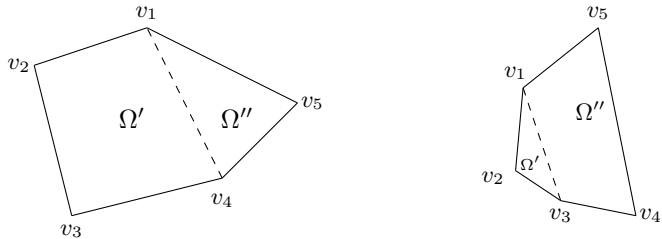
\begin{figure}[h] \centering 
\begin{tikzpicture}
\draw  (-3, 0) -- (-1, 0.5);
\draw (-1, 0.5) -- (0, 1.5); 
\draw (0, 1.5) -- (-2, 2.5); 
\draw (-2, 2.5) -- (-3.5, 2); 
\draw (-3.5, 2) -- (-3, 0); 
\draw[dashed] (-1, 0.5) -- (-2, 2.5);

\node at (-3, -0.2) {\small $v_3$}; 
\node at (-1, 0.2) {\small{$v_4$}};
\node at (0.2, 1.5) {\small $v_5$};
\node at (-2, 2.7){\small $v_1$}; 
\node at (-3.7, 2) {\small{$v_2$}}; 
\node at (-2.2, 1.3) {$\Omega'$};
\node at (-0.8, 1.3) {$\Omega''$}; 

\draw (3.5, 0.2) -- (4.5, 0); 
\draw (3.5, 0.2) -- (2.9, 0.6); 
\draw (2.9, 0.6) -- (3, 1.7); 
\draw (3, 1.7) -- (4, 2.5);  
\draw (4, 2.5) -- (4.5, 0); 
\draw [dashed] (3, 1.7) -- (3.5, 0.2); 
\node at (3.1, 0.7) {\tiny $\Omega'$}; 
\node at (3.9, 1.4) {\small $\Omega''$}; 
\node at (3.5, 0) {\small $v_3$};
\node at (4.7, 0) {\small $v_4$}; 
\node at (4, 2.7) {\small $v_5$}; 
\node at (2.6, 0.5) {\small $v_2$}; 
\node at (2.9, 1.9) {\small $v_1$};

\end{tikzpicture}
\caption{Two convex 5-gons are shown here, the left having one odd angle at vertex $v_5$ while the right has two odd angles at vertices $v_4$ and $v_5$.  The 5-gons are split into $\Omega''$ and $\Omega'$ by joining $v_1$ to $v_4$ or $v_3$, respectively. 
}
\label{fig:3versions}
\end{figure} 

If $n=4$, then we can apply Lemma~\ref{lem:Euc geom}, with $\ell_2$ playing the role of the known edge length, to complete the proof. 
Thus assume $n\geq 5$.   For $j=1,\dots, n$, we denote by $v_j$ the $j$th vertex of $\Om$ and by $e_j$ the $j$th edge, so the angle at $v_j$ is $\alpha_j$ and the length of $e_j$ is $\ell_j$.  
The line segment $v_{n-k}v_1$ divides $\Om$ into a convex $(n-k)$-gon $\Om'$ with vertices $v_1,\dots, v_{n-k}$ and edges $e_2,\dots, e_{n-k}, v_{n-k}v_1$ and a convex $(k+2)$-gon $\Om''$ (so a triangle or a quadrilateral) whose edge set consists of $v_{n-k}v_1$ along with the edges of $\Om$ adjacent to the odd angle(s).  This is shown in Figure \ref{fig:3versions}.  It's easy to see that the known data $\ell_2,\dots, \ell_{n-k}, \alpha_2,\dots, \alpha_{n-k-1}$ determines $\Om'$.   Consequently, we know the edge length $|v_{n-k}v_1|$ and we know the angles of $\Om'$ at $v_{n-k}$ and $v_1$.   From these angles along with our knowledge of $\alpha_{n-k}$ and $\alpha_1$ from $\balpha(\omred)$, we also know the angles of $\Om''$ at these two vertices.  Thus we know the information ASA (angle-side-angle) for $\Om''$.  If $k=1$ so that $\Om''$ is a triangle, this determines $\Om''$.    If $k=2$, we use Lemma~\ref{lem:Euc geom} and the fact that we also know the sum of the remaining edge lengths of $\Om''$ (equivalently, we know the perimeter of $\Om''$) to recover $\Om''$.    We have thus determined the remaining side lengths of $\Om$, completing the proof.
\end{proof}

To complete our discussion of weakly edge-admissible convex polygons it remains to consider those with two non-adjacent odd angles.  Note that any such polygon necessarily has at least four vertices.   We will denote by $\cpp$ the class of all such convex polygons and write 
$$\cpp=\cup_{n=4}^\infty\, \cpp(n)$$
where $\cpp(n)$ consists of all $n$-gons in $\cpp$.   

 The known Steklov spectral invariants do not suffice to show in full generality that Steklov isospectral sets of such polygons are finite.    Indeed, we will see below that some convex polygons in this class can be continuously deformed while keeping all angles fixed and keeping the characteristic polynomial fixed.  However, we will also show that most convex polygons in this class are finitely determined within $\cpp$ by their characteristic polynomials alone.    
  
 \begin{prop}\label{prop: non adjacent odd}
 For $\Om \in \cpp$, the characteristic polynomial yields the following data:
 \begin{enumerate}
 \item[(a)] the number $n$ of vertices;
 \item[(b)] $\pmc(\omred)$ and $\bell(\omred)$ (both uniquely, modulo the choice of boundary orientation and cyclic labeling);
 \item[(c)] $\balpha(\omred)$ up to at most $n-1$ explicit possibilities and typically uniquely (modulo the choice of boundary orientation and cyclic labeling);   
 \item[(d)] the values of the two odd angles of $\Om$ up to finitely many explicit possibilities.
 \end{enumerate}
Consequently, the characteristic polynomial of $\Om$ determines $\balpha(\Om)$ up to finitely many explicit possibilities.   For each such choice of $\balpha(\Om)$, the characteristic polynomial uniquely determines all the edge lengths except for the pairs of edges incident on the odd angles.   For the latter, the characteristic polynomial determines the sum of the lengths of the edges in each pair.
 \end{prop}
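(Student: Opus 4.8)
The plan is to push the entire question onto the reduced curvilinear $(n-2)$-gon $\omred$, which is admissible by Lemma~\ref{om vs omred}(a), and to exploit the invariant theory for admissible curvilinear polygons. Parts (a) and (b) come almost directly from Lemma~\ref{om vs omred}: since the characteristic polynomials of $\Om$ and $\omred$ coincide up to the sign of the constant term, counting the number of distinct cosine frequencies (as in the discussion preceding Proposition~\ref{prop:vertices}) recovers the number $n-2$ of vertices of $\omred$, and since every member of $\cpp$ has exactly two odd angles this gives $n$. Theorem~\ref{th:ss_ngons} together with Remark~\ref{rem: rem on thm} then recovers $\bell(\omred)$ and $\pmca(\omred)$ up to orientation and cyclic labeling.

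The linchpin for the remaining parts is a single elementary bound: every interior angle of $\omred$ exceeds $\tfrac{\pi}{3}$. Indeed, the two odd angles of $\Om$ are each at most $\tfrac{\pi}{3}$, so their sum is at most $\tfrac{2\pi}{3}$; since the other $n-3$ interior angles are each strictly less than $\pi$, the relation $\sum_j\alpha_j=(n-2)\pi$ forces each remaining (hence each $\omred$) angle to exceed $\pi-\tfrac{2\pi}{3}=\tfrac{\pi}{3}$. A direct computation of the sign of $c$ on $(\tfrac{\pi}{3},\pi)$ shows that $c(\alpha)<0$ throughout, reaching $-1$ exactly at the right angle. Consequently all entries of $\pmc(\omred)$ are negative, so among the sign choices left open by Theorem~\ref{th:ss_ngons} only the one rendering every entry negative is geometrically realizable; this pins down $\pmc(\omred)$ uniquely and establishes (b).

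For (c) I would invert $|c|$ on $(\tfrac{\pi}{3},\pi)$. By Lemma~\ref{lem: |c|}(b)--(c), a value in $(0,1)$ has exactly two preimages in this range---one acute in $(\tfrac{\pi}{3},\tfrac{\pi}{2})$ and one obtuse in $(\tfrac{\pi}{2},\pi)$---while the value $1$ forces the right angle. Since a convex polygon has at most three acute interior angles and two of them are the odd angles, at most one angle of $\omred$ can be acute; hence either all ambiguous angles are resolved as obtuse or exactly one is acute, yielding at most $n-1$ candidates for $\balpha(\omred)$ and, generically, a unique one. For (d), once $\balpha(\omred)$ is fixed the angle sum determines the sum $\theta$ of the two odd angles, and $\theta=\tfrac{\pi}{2a+1}+\tfrac{\pi}{2b+1}$ has only finitely many solutions because the larger summand is at least $\tfrac{\theta}{2}$, which bounds one odd denominator and hence, via $\theta$, the other.

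Finally I would assemble the concluding statement. The multiset of angles of $\Om$ is $\balpha(\omred)$ together with the two odd angles, so (c) and (d) determine it up to finitely many possibilities; passing to the cyclic vector $\balpha(\Om)$ requires only the additional finite choice of which two edges of $\omred$ are the ``curved'' fused edges at whose interiors the odd vertices are reinserted. For each such choice the $n-4$ unsplit edges of $\omred$ are honest edges of $\Om$, so their lengths are read off $\bell(\omred)$, while each fused edge of known length is the union of the two $\Om$-edges meeting at an odd vertex, so the characteristic polynomial returns exactly the sum of each such pair. I expect the main obstacle to be part (c): the non-injectivity of $|c|$ is precisely what could destroy finiteness, and everything rests on the $\tfrac{\pi}{3}$ lower bound together with the at-most-one-acute-angle count. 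A secondary point is conceptual bookkeeping---the characteristic polynomial records only the lengths appearing in $\bell(\omred)$, not which edges are curved nor how a fused length splits, so the placement of the fused edges and the splitting of each fused length are exactly the data that remain genuinely undetermined.
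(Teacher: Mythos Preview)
Your approach is essentially the paper's approach: reduce to $\omred$, use the $\tfrac{\pi}{3}$ lower bound on the angles of $\omred$ to force $c<0$ there, count non-obtuse angles to get at most one ambiguous entry, and then bound the odd angles via the angle sum.

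Two points deserve tightening. First, the order in which you invoke Theorem~\ref{th:ss_ngons} and Remark~\ref{rem: rem on thm} is premature: that remark applies only when $\omred$ has at most one even angle, and you have not established this at the point you use it. The paper derives this \emph{from} the $\tfrac{\pi}{3}$ bound (the only even angle in $(\tfrac{\pi}{3},\pi)$ is $\tfrac{\pi}{2}$, and at most one angle of $\omred$ is non-obtuse), so you should state the bound first and then invoke Lemma~\ref{om vs omred}(c) to get $\bell(\omred)$ and $\pmca(\omred)$ uniquely. Relatedly, your count ``at most three \emph{acute} angles'' should be ``at most three \emph{non-obtuse} angles''; you need the stronger statement to rule out a second right angle in $\omred$.

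Second, your ``generically, a unique one'' in (c) is the only place where the paper does real additional work that you skip. The paper shows that if $\bgamma$ and $\bdelta$ are two distinct candidates for $\balpha(\omred)$ differing in the $j$th slot, then $y_j=\tfrac{x_j}{4x_j-1}$, and the constraint that both $(n-2)\pi-s(\bgamma)$ and $(n-2)\pi-s(\bdelta)$ lie in the discrete set $D=\{\tfrac{\pi}{2p+1}+\tfrac{\pi}{2q+1}\}$ is a codimension-one condition. Without this, ``typically'' is an assertion, not a proof.
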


 \begin{proof}
 
(a) follows from Lemma~\ref{om vs omred} since $\Om$ has exactly two more vertices than $\omred$.  Next consider (b).  Since the sum of the two odd angles is at most $\frac{2\pi}{3}$, we have 
\beq\label{eq:pmcomred} \balpha(\omred)\in \left(\frac{\pi}{3}, \pi\right)^{n-2}\eeq 
and at most one angle of $\omred$ is non-obtuse.

In particular, $\omred$ has at most one even angle, and then the even angle must be a right angle.   (b) now follows from Lemma~\ref{om vs omred}(c) along with the fact that $c(\alpha)<0$ for all $\alpha\in (\frac{\pi}{3},\pi)$.   

We apply (b) and Lemma~\ref{lem: |c|} to prove (c).   If $\pmc(\omred)$ has an entry $-1$, necessarily corresponding to a right angle, then $\pmc(\omred)$ determines $\balpha(\omred)$ uniquely since the remaining angles are obtuse. Otherwise, each of the $n-2$ entries in $\pmc(\omred)$ corresponds to a possible location of one non-obtuse angle in $(\frac{\pi}{3}, \frac{\pi}{2})$; for each entry, Lemma~\ref{lem: |c|} (c) implies that we know the angle.  It is also possible that all angles are obtuse, giving a total of $n-1$ possibilities for $\balpha(\omred)$.  To prove generic uniqueness, suppose that $\bgamma:=(x_1\pi,\dots, x_{n-2}\pi)$ and $\bdelta:=(y_1\pi,\dots, y_{n-2}\pi)$ are two of the possible $n-1$ candidates for $\balpha(\omred)$.    If $x_j\neq y_j$, Equation~\eqref{eq:pmcomred} and Lemma~\ref{lem: |c|} imply that one of $x_j\pi,y_j\pi$ lies in $(\frac{\pi}{3},\frac{\pi}{2})$ and the other in $(\frac{\pi}{2},\pi)$.  Since (b) says that  $c(x_j\pi)=c(y_j\pi)$, we then have
$\frac{\pi^2}{2 x_j\pi}=2\pi-\frac{\pi^2}{2 y_j\pi}$.  Thus
$$y_j=\frac{x_j}{4x_j-1} \mbox{\,\,and\,\,}x_j=\frac{y_j}{4y_j-1}.$$
Hence 
\beq\label{eq:yj-xj} y_j\pi-x_j\pi = \pi\frac{2x_j-4x_j^2}{4x_j-1}.\eeq

Define $D$ to be the discrete set given by $D=\{\frac{\pi}{2p+1}+\frac{\pi}{2q+1}:\,\,p,q\in\Z^+\}$.  Let $s(\bgamma)=x_1\pi+\dots +x_{n-2}\pi$ and  $s(\bdelta)=y_1\pi+\dots +y_{n-2}\pi$.  Observe that both $(n-2)\pi-s(\bgamma)$ and $(n-2)\pi-s(\bdelta)$ lie in $D$.    Thus \beq\label{eq:big condition}s(\bgamma)-s(\bdelta)\in D -D =\{a-b: \,a,b\in D\}.\eeq   
Equations~\eqref{eq:yj-xj} and \eqref{eq:big condition} together imply the generic uniqueness of $\omred$.

Next consider (d).   Given any fixed choice of $\balpha(\omred)$ in (c), let $\mu$ be the sum of the entries.   Then the sum of the two odd angles is $(n-2)\pi -\mu$, so at least one of the odd angles is greater than or equal to $\frac{1}{2}[(n-2)\pi -\mu]$.  Hence there are only finitely many possible values for the odd angles, and they are explicitly computable.   

For the final statement of the proposition, items (c) and (d) together yield $\balpha(\Om)$ up to finitely many possibilities.   (Missing from (c) and (d) is the location of the two odd angles -- equivalently the determination of which edges of $\omred$ have non-trivial curvature -- but there are only finitely many possible locations.)   For each of the finitely many choices of $\balpha(\Om)$, the assertion concerning the edge lengths is equivalent to the knowledge of $\bell(\omred)$, guaranteed by (b).
\end{proof}

Given $\Om\in\cpp$, consider the set of all convex polygons in $\cpp$ that have the same characteristic polynomial as $\Om$.    To determine whether this set is finite, it remains only to determine for each of the finitely many choices of $\balpha(\Om)$ in Proposition~\ref{prop: non adjacent odd} whether we can recover the lengths of the edges adjacent to the odd angles from our knowledge of $\balpha(\Om)$ and of the other edge lengths.  The following purely geometric lemma tells us that generically these lengths are uniquely determined but that, when the genericity condition fails, the edge lengths can be continuously deformed without affecting the characteristic polynomial.  For notational simplicity in the lemma, we cyclically relabel the vertices so that the odd angles are labeled $\alpha_1$ and $\alpha_m$ for some $m$.   The restriction on $m$ in the lemma is the condition that the two odd angles are not adjacent.

 \begin{lemma}\label{lem:vectors}  Fix $m$ with $3\leq m\leq n-1$. Suppose that the following data for a convex $n$-gon $\Om$ is known:
 \beq\label{eq: alpha data}\alpha_1, \dots, \alpha_n\eeq and
 \beq\label{eq: weak edge data}\ell_1+\ell_2, \,\ell_3,\dots, \ell_{m-1}, \,\ell_m+\ell_{m+1},\,\ell_{m+2},\dots, \ell_{n}.\eeq

Let $$\Psi=\sum_{i=2}^{m-1}\,(\pi -\alpha_i)\mbox{\,\,and\,\,}\Phi=\sum_{j=m+1}^{n}\,(\pi -\alpha_j).$$
 \begin{enumerate}
  \item[(a)] If $\Psi\neq \Phi$, then $\Om$ is uniquely determined up to congruence by this data.   
   \item[(b)] If $\Psi=\Phi$, then one can continuously deform $\Om$ without changing the data above.
 \end{enumerate}
 \end{lemma}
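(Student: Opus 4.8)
The plan is to reduce the problem to a $2\times 2$ real linear system for the two unknown edge lengths and to show that its coefficient matrix is invertible precisely when $\Psi\neq\Phi$. Since all interior angles $\alpha_1,\dots,\alpha_n$ are prescribed, the direction of every edge is determined up to a single global rotation: traversing $\partial\Om$ counterclockwise, the unit direction vector $u_j$ of the $j$th edge satisfies $u_{j+1}=\rho_{\pi-\alpha_j}\,u_j$, where $\rho_\theta$ denotes rotation by $\theta$ and $\pi-\alpha_j$ is the exterior (turning) angle at the vertex between edges $j$ and $j+1$. Fixing the direction of the first edge, a harmless choice up to congruence, determines all the $u_j$ explicitly, with $u_j=\rho_{\theta_j}u_1$ for $\theta_j=\sum_{i=1}^{j-1}(\pi-\alpha_i)$.

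First I would write the closure condition $\sum_{j=1}^n \ell_j u_j=0$. Every term is known except those involving $\ell_1,\ell_2,\ell_m,\ell_{m+1}$; using the data $\ell_1+\ell_2=s_1$ and $\ell_m+\ell_{m+1}=s_2$ to eliminate $\ell_2$ and $\ell_{m+1}$, the closure condition becomes
\[\ell_1\,(u_1-u_2)+\ell_m\,(u_m-u_{m+1})=R,\]
where $R$ is a completely determined vector. The linear map $(\ell_1,\ell_m)\mapsto \ell_1(u_1-u_2)+\ell_m(u_m-u_{m+1})$ is invertible exactly when $u_1-u_2$ and $u_m-u_{m+1}$ are linearly independent; each of these vectors is nonzero because $\alpha_1,\alpha_m<\pi$.

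The crux of the argument is to translate the (non)parallelism of $u_1-u_2$ and $u_m-u_{m+1}$ into the condition $\Psi=\Phi$. Writing $u_j$ through its direction angle $\phi_j$ and using the sum-to-product identities, one finds that $u_j-u_{j+1}$ is a positive multiple of the unit vector at angle $\tfrac{\phi_j+\phi_{j+1}}2-\tfrac\pi2$; hence the two difference vectors are parallel if and only if $\phi_1+\phi_2\equiv\phi_m+\phi_{m+1}\pmod{2\pi}$. Expanding both sides with the turning relation gives $(\phi_m+\phi_{m+1})-(\phi_1+\phi_2)=(\pi-\alpha_1)+(\pi-\alpha_m)+2\Psi$, and substituting the total-turning identity $(\pi-\alpha_1)+\Psi+(\pi-\alpha_m)+\Phi=2\pi$ reduces this to $2\pi+\Psi-\Phi$. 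Since $\Psi,\Phi>0$ and $\Psi+\Phi<2\pi$ for a convex polygon, $2\pi+\Psi-\Phi\equiv0\pmod{2\pi}$ holds if and only if $\Psi=\Phi$. I expect this computation — keeping the modular bookkeeping honest and invoking convexity to rule out the other residues — to be the main obstacle, though it is elementary once the set-up is in place.

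With this in hand both conclusions are immediate. For part (a), $\Psi\neq\Phi$ makes the system nonsingular, so $\ell_1$ and $\ell_m$, hence $\ell_2,\ell_{m+1}$ and thus every edge vector, are uniquely determined; together with the prescribed directions this pins down $\Om$ up to congruence. For part (b), $\Psi=\Phi$ makes the system singular with one-dimensional kernel, spanned by some nonzero $(p,q)$ satisfying $p(u_1-u_2)+q(u_m-u_{m+1})=0$; since $\Om$ itself provides one solution, I would deform $(\ell_1,\ell_m)\mapsto(\ell_1+pt,\ell_m+qt)$ and $(\ell_2,\ell_{m+1})\mapsto(\ell_2-pt,\ell_{m+1}-qt)$. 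This preserves all prescribed data — the angles, the untouched edge lengths, and the sums $s_1,s_2$ — and preserves closure, while for $|t|$ small all four lengths stay positive and the fixed angles keep the polygon convex, yielding the desired nontrivial continuous deformation.
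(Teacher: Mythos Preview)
Your proof is correct and follows essentially the same approach as the paper: both set up the closure condition $\sum_j \ell_j u_j=0$ as a linear system in the unknown lengths, reduce uniqueness to the linear independence of $u_1-u_2$ and $u_m-u_{m+1}$, and then translate parallelism of these two vectors into the condition $\Psi=\Phi$. The only cosmetic difference is in that last translation: the paper observes that $u_{j+1}-u_j$ bisects the angle $\alpha_j$ and uses a total-curvature argument along the two boundary arcs from $v_1$ to $v_m$, whereas you compute the direction angles of the difference vectors directly via sum-to-product identities and the turning relation---both arguments are equivalent and equally elementary.
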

 
 \begin{proof}
Denote the vertices of $\Om$ by $v_1,\dots, v_n$. We first claim that $\Psi=\Phi$ if and only if the bisector $L_m$ of the angle $\alpha_m$ at $v_m$ is parallel to the bisector $L_1$ of $\alpha_1$ at $v_1$.
  
Consider two polygonal paths between $v_1$ and $v_m$ given by $P: v_1,v_2,\dots, v_{m}$ and $Q: v_1, v_n, \dots, v_{m+1}, v_m$. Since $\Om$ is convex, each of these paths has curvature of constant sign.  Due to the opposite orientations, the curvatures of $P$ and $Q$ have opposite sign.  $\Psi$ and $\Phi$ are precisely the absolute values of the total curvatures of $P$ and $Q$, respectively, and measure the change in the direction of the tangents to the initial and final segments. The initial segments of the two paths make the same angle with $L_1$, differing only by reflection across $L_1$. Consequently, letting $L$ denote the line through $v_m$ parallel to $L_1$, we have $\Psi=\Phi$ if and only if the final segments of $P$ and $Q$ make equal angles with $L$, i.e., if and only if $L_m=L$.  

  Equations~\eqref{eq:anglesum} and \eqref{eq:m'm''}, together with the fact that $\alpha_m=\alpha_m'+\alpha_m''$, yield

 We now prove statements (a) and (b).   We may assume $\Om$ has perimeter one.  Write $h=\ell_1+\ell_2$ and $k=\ell_m+\ell_{m+1}.$
 Situate $\Om$ in the plane and let $\ub_1, \dots, \ub_n$ be unit vectors parallel to the edges $e_1, \dots, e_n$, oriented so that $\ub_j$ points in the direction from $v_{j-1}$ to $v_j$.  Observe that $\ub_2, \dots, \ub_n$ are uniquely determined by $\ub_1$ and the angles $\alpha_1,\dots, \alpha_n$.    Using the fact that the boundary of $\Om$ is a closed polygonal path, we see that the edge lengths satisfy the following system of linear equations:
\begin{equation}\label{eq:vectors}
\begin{cases}\ell_1\ub_1+ \ell_2\ub_2 +\ell_{m}\ub_m+\ell_{m+1}\ub_{m+1}  = -\mathbf{c}\\
\ell_1+\ell_2=h\\
\ell_m+\ell_{m+1}=k
\end{cases}
\end{equation}
where $\mathbf{c}$ is the constant vector
$$\mathbf{c}=\sum_{j\neq 1, 2, m, m+1}\,\ell_j\ub_j.$$
We know that this system has a solution with all the $\ell_j$ strictly positive since we began with the data for a convex $n$-gon $\Om$.   In view of the last two equations, any other solution  $\ell_1', \ell_2', \ell_m', \ell_{m+1}'$ must satisfy 
$$\ell_1'=\ell_1+x,\,\, \ell_2'=\ell_2-x,\,\, \ell_m'=\ell_m+y,\,\,\ell_{m+1}'=\ell_{m+1}-y$$
for some $x, y$.   The first equation then implies that 
\begin{equation}\label{eq: aubu} x(\ub_1-\ub_2) = y(\ub_{m+1}-\ub_m).\end{equation} 

Unless $\ub_2-\ub_1$ is parallel to $\ub_{m+1}-\ub_m$, Equation~\ref{eq: aubu} implies that $x=y=0$, and thus the system given by~\eqref{eq:vectors} has a unique solution; equivalently,  $\Om$ is uniquely determined up to congruence.  Now observe that $\ub_2-\ub_1$, respectively $\ub_{m+1}-\ub_m$, is the bisector of angle $\alpha_1$, respectively $\alpha_m$, in $\Om$.   As noted above, the hypothesis of part (a) is precisely the condition that the two bisectors are not parallel.  This proves (a).

On the other hand, if the bisectors are parallel, i.e., if the hypothesis of part (b) holds, then for any $x$ and $y$ sufficiently small, we get another solution $\ell_1', \ell_2', \ell_m', \ell_{m+1}'$ with all entries positive.   This yields a new closed polygonal path.   By continuity, if $x$ and $y$ are sufficiently small, this path must also bound a convex $n$-gon.   Part (b) now follows.    
 \end{proof}

In the case of quadrilaterals, we can say much more; the following lemma is independent of whether the odd angles are adjacent.

\begin{lemma}\label{lem:remaining angles} Within the class of all weakly edge-admissible convex quadrilaterals with two odd angles, the characteristic polynomial determines whether the two non-odd angles are equal.  Moreover, if they are equal, then the characteristic polynomial determines their value.
\end{lemma}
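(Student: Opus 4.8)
The plan is to reduce everything to the curvilinear bigon $\omred$ and exploit the explicit two-frequency form of its characteristic polynomial. Since $\Om$ is a weakly edge-admissible convex quadrilateral with two odd angles, deleting the two odd vertices produces a reduced curvilinear $2$-gon $\omred$ whose two vertex angles are precisely the two non-odd angles, which I will call $\beta_1$ and $\beta_2$, and whose two edge lengths $L_1\neq L_2$ are incommensurable over $\{-1,0,1\}$ by weak edge-admissibility. This reduction is the same whether or not the odd angles are adjacent (only the merged lengths differ), which is exactly why the statement makes no distinction. By Lemma~\ref{om vs omred}(b), $P_\Om$ equals $P_{\omred}$ up to the sign of the constant term, and a direct computation from Definition~\ref{def: char poly} gives
\[ P_{\omred}(t) = \cos\big((L_1+L_2)t\big) + c(\beta_1)c(\beta_2)\,\cos\big(|L_1-L_2|\,t\big) - \sin\Big(\tfrac{\pi^2}{2\beta_1}\Big)\sin\Big(\tfrac{\pi^2}{2\beta_2}\Big). \]
Because $L_1\neq L_2$, the two cosine frequencies are distinct and positive, so from $P_\Om$ I can read off $p:=c(\beta_1)c(\beta_2)$, the coefficient of the lower-frequency cosine, and $r:=\big|\sin(\tfrac{\pi^2}{2\beta_1})\sin(\tfrac{\pi^2}{2\beta_2})\big|$, the absolute value of the constant term (the two odd angles contribute only factors $\pm1$).

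Next I would set $\phi_i=\frac{\pi^2}{2\beta_i}$, so $c(\beta_i)=\cos\phi_i$, and use the product-to-sum identities $\cos(\phi_1-\phi_2)=p+\sin\phi_1\sin\phi_2$ and $\cos(\phi_1+\phi_2)=p-\sin\phi_1\sin\phi_2$. The key normalization is the angle constraint: each odd angle is at most $\frac{\pi}{3}$, so $\beta_1+\beta_2\geq\frac{4\pi}{3}$, forcing $\beta_i\in(\frac{\pi}{3},\pi)$ and hence $\phi_i\in(\frac{\pi}{2},\frac{3\pi}{2})$. I would then establish the clean criterion $\beta_1=\beta_2 \iff p+r=1$. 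The forward direction is immediate, since then $\sin\phi_1\sin\phi_2=\sin^2\phi_1=r\geq0$ and $\cos(\phi_1-\phi_2)=p+r=1$.

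For the converse I would write $\sigma=\sin\phi_1\sin\phi_2=\pm r$; then $p+r=1$ forces either $\cos(\phi_1-\phi_2)=1$ (when $\sigma=r\geq0$), giving $\phi_1=\phi_2$ and $\beta_1=\beta_2$, or else $\cos(\phi_1+\phi_2)=1$ (when $\sigma=-r<0$), giving $\phi_1+\phi_2=2\pi$, i.e. $\frac{1}{\beta_1}+\frac{1}{\beta_2}=\frac{4}{\pi}$ with one of $\beta_1,\beta_2$ acute and the other obtuse. This second \emph{mirror} alternative, arising exactly from the non-injectivity of $c$ on $(\frac{\pi}{3},\pi)$ recorded in Lemma~\ref{lem: |c|}, is the main obstacle, and excluding it is the heart of the argument. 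A short one-variable optimization shows that any pair with $\frac{1}{\beta_1}+\frac{1}{\beta_2}=\frac{4}{\pi}$, $\beta_1\in(\frac{\pi}{3},\frac{\pi}{2})$, $\beta_2\in(\frac{\pi}{2},\pi)$ satisfies $\beta_1+\beta_2<\frac{4\pi}{3}$, contradicting $\beta_1+\beta_2\geq\frac{4\pi}{3}$; hence the mirror case cannot occur in this class and the criterion is valid. Finally, if $\beta_1=\beta_2=\beta$, the constraint gives $\beta\geq\frac{2\pi}{3}$, so $\phi=\frac{\pi^2}{2\beta}\in(\frac{\pi}{2},\pi)$, where $\cos$ is injective; since $\cos\phi=-\sqrt{p}$, the common value $\beta=\frac{\pi^2}{2\arccos(-\sqrt{p})}$ is determined by $P_\Om$, completing the proof.
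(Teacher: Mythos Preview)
Your proof is correct and takes a genuinely different route from the paper's.

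The paper argues comparatively: given two quadrilaterals $\Om$, $\Om'$ with the same characteristic polynomial and $\gamma=\delta$ in $\Om$, it invokes Lemma~\ref{om vs omred}(c) to match up $|c|$-values, pins down $\gamma'=\gamma$ by obtuseness, and then rules out a non-obtuse $\delta'$ by parametrizing $\delta=\gamma=\pi(2-x)/2$ via the odd-angle sum $\pi x$, solving explicitly for $\delta'=\pi(2-x)/(6-4x)$, and checking that the resulting odd-angle sum $S(x)=\pi(2-x^2)/(3-2x)$ for $\Om'$ exceeds $\tfrac{2\pi}{3}$. In contrast, you extract from the two-term polynomial of $\omred$ the two scalar invariants $p=c(\beta_1)c(\beta_2)$ and $r=|\sin\phi_1\sin\phi_2|$ and prove the clean intrinsic criterion $\beta_1=\beta_2\iff p+r=1$, with no reference to a second polygon. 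The one substantive obstacle---the ``mirror'' branch $\phi_1+\phi_2=2\pi$---is the same in both arguments, and both exclude it by the same mechanism (your inequality $\beta_1+\beta_2<\tfrac{4\pi}{3}$ under $\tfrac{1}{\beta_1}+\tfrac{1}{\beta_2}=\tfrac{4}{\pi}$ is equivalent to the paper's $S(x)>\tfrac{2\pi}{3}$). What your approach buys is an explicit test computable directly from $P_\Om$ and a one-line recovery formula $\beta=\tfrac{\pi^2}{2\arccos(-\sqrt{p})}$ in the equal case; what the paper's approach buys is that it stays within the framework of $\pmca$ already set up for the rest of the section, without needing to write out $P_{\omred}$ explicitly.
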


\begin{proof} 
Let $\Om$ and $\Om'$ lie in this class of quadrilaterals; assume that they have the same characteristic polynomial. Denote by $\gamma$ and $\delta$, respectively $\gamma'$ and $\delta'$, the two angles of $\Om$, respectively $\Om'$, that are not odd. Suppose that one of $\Omega$ and $\Omega'$, say $\Omega$, has two equal angles, i.e., $\gamma=\delta$.  We need to show that $\gamma'=\delta'=\gamma$.  Since the sum of any two odd angles is at most $\frac{2\pi}{3}$, the two equal angles $\gamma$ and $\delta$ are obtuse.  Moreover, 
\beq\label{eq:g'd'} \gamma',\,\delta'\in \left(\frac{\pi}{3}, \pi\right),\eeq and at least one of these angles, say $\gamma'$, is obtuse.    

By Lemma~\ref{om vs omred}(c), we have 
$\{|c|(\gamma'), |c|(\delta')\}= \{|c|(\gamma), |c|(\delta)\}$.  Since $\gamma$ and $\gamma'$ are both obtuse, we must then have $\gamma=\gamma'$ by Lemma~\ref{lem: |c|}(c).    If $\delta'$ is also obtuse, then $\delta'=\gamma=\gamma'$ and we are done.   

Suppose that $\delta'$ is not obtuse.  Then $\delta'\in \left(\frac{\pi}{3}, \frac{\pi}{2}\right)$  by Equation~\ref{eq:g'd'}.  (We can't have $\delta'=\frac{\pi}{2}$ since $|c|(\delta')=|c|(\delta).$) Let $\pi x$ be the sum of the two odd angles of $\Om$, where $x \in (0, \frac{2}{3}]$.   Then the sum of the odd angles of $\Om'$ is given by 
\beq\label{eq:odd sum}S(x):=\pi x + \delta -\delta'.\eeq
To get a contradiction, it suffices to show that $S(x)>\frac{2\pi}{3}$.   
We have $$\delta=\gamma=\pi\frac{2-x}{2}\mbox{\,\,so\,\,}\frac{\pi^2}{2\delta}=\frac{\pi}{2-x}.$$
Since $|c|(\delta)=|c|(\delta')$ and $\frac{\pi^2}{2\delta}\in \left(\frac{\pi}{2},\pi\right)$ while $\frac{\pi^2}{2\delta'}\in \left(\pi,\frac{3\pi}{2}\right)$, we have $$ \frac{\pi^2}{2\delta'}=2\pi - \frac{\pi^2}{2\delta}= \pi\frac{3-2x}{2-x} \mbox{\,\,so\,\,} \delta'= \pi\frac{2-x}{6-4x}.$$

Thus 
$$S(x)=\pi\left[\frac{2-x^2}{3-2x}\right].$$
One easily checks that $S(x)>\frac{2\pi}{3}$, and the lemma follows.
\end{proof}

\begin{remark}\label{rem:deform}
If a weakly edge-admissible convex quadrilateral $\Om$ has two non-adjacent odd angles and two equal non-odd angles, then Lemma~\ref{lem:vectors} shows that one can continuously deform the edge lengths without affecting either the characteristic polynomial or the angles. 
    \end{remark}

    \begin{thm}  
     Let $\Om$ be any weakly edge-admissible convex quadrilateral other than those in Remark \ref{rem:deform}.  
    Then the characteristic polynomial of $\Om$ and a lower bound on the $k$th Steklov eigenvalue for some $k\in\Z^+$ together determine $\Om$ up to finitely many possibilities within the class of all weakly edge-admissible convex quadrilaterals.
\end{thm}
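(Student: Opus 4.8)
The plan is to stratify by the number $k$ of odd interior angles of $\Om$, which the characteristic polynomial detects since it determines the number of vertices of $\omred$ (Lemma~\ref{om vs omred} together with Proposition~\ref{prop:vertices}); for a convex quadrilateral $k\in\{0,1,2\}$. If $k=0$ then $\Om$ is admissible and Theorem~\ref{thm:upper bound} already gives an explicit finite bound. If $k=1$, or if $k=2$ with the two odd angles adjacent, then $\Om$ lies in the class $\cp$ and Theorem~\ref{thm: not two adjacent odd} yields the desired finiteness directly (its hypothesis of a common lower bound on $\sigma_k$ is exactly the one we are given, and through Corollary~\ref{cor:awesome2} and Lemma~\ref{lem: |c|}(d) that bound is what confines the odd angles to finitely many values). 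So the entire difficulty is concentrated in the case $k=2$, and the real subtlety is that the characteristic polynomial records only the unordered edge lengths of $\omred$ together with $\pmc(\omred)$, and hence cannot by itself tell whether the two odd angles of a competing quadrilateral are adjacent or not; both placements must be tracked simultaneously.

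First I would apply Lemma~\ref{lem:remaining angles}, which decides whether the two non-odd angles are equal and, when they are, determines their common value. Suppose they are unequal. Then every weakly edge-admissible convex quadrilateral sharing the characteristic polynomial of $\Om$ also has unequal non-odd angles, so none of them is of the type in Remark~\ref{rem:deform}; in the language of Lemma~\ref{lem:vectors} this says $\Psi\neq\Phi$ for the non-adjacent competitors. Now Proposition~\ref{prop: non adjacent odd} (for the non-adjacent candidates) and Lemma~\ref{om vs omred} (for the adjacent ones), combined with the eigenvalue bound through Corollary~\ref{cor:awesome2} and Lemma~\ref{lem: |c|}, reduce $\balpha(\Om)$ to finitely many explicit possibilities and pin down $\bell(\omred)$. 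For each such choice the remaining edge lengths are forced: uniquely by Lemma~\ref{lem:vectors}(a) when the competitor is non-adjacent (since $\Psi\neq\Phi$), and uniquely by Lemma~\ref{lem:Euc geom} when it is adjacent (four known angles, the single known straight side of $\omred$, and the known perimeter determine it). Adding finitely many finite contingencies gives finiteness.

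The remaining case is $k=2$ with the two non-odd angles equal. Since $\Om$ is assumed not to be of the type in Remark~\ref{rem:deform}, its odd angles must be adjacent, and then $\Om$ is itself rigid: Lemma~\ref{lem:remaining angles} gives the common value $\beta$, the odd angles must sum to $2\pi-2\beta$ — which by Lemma~\ref{lem: |c|}(d) and the eigenvalue bound leaves only finitely many explicit pairs — and for each resulting angle vector Lemma~\ref{lem:Euc geom} recovers the adjacent quadrilateral uniquely. The danger is a competing \emph{non-adjacent} quadrilateral with the same two equal non-odd angles: such a quadrilateral is automatically of the Remark~\ref{rem:deform} type and sits in a one-parameter family of pairwise non-congruent quadrilaterals sharing a single characteristic polynomial. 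To exclude these I would use a rigidity computation: cutting such a non-adjacent quadrilateral along the diagonal joining its two opposite non-odd vertices and applying the law of sines in the two resulting triangles shows that the two edge-length sums $A'$ and $B'$ comprising $\bell(\omred)$ obey the rigid relation $A'/B'=\sin(\theta_2/2)/\sin(\theta_1/2)$, depending only on the two odd angles $\theta_1,\theta_2$. Since the characteristic polynomial fixes the unordered pair $\{A',B'\}$ and the eigenvalue bound leaves only finitely many candidate odd-angle pairs, one is reduced to a finite check comparing this rigid ratio with the ratio of $\{A',B'\}$, discarding any deformable non-adjacent family whose forced ratio does not match.

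I expect the main obstacle to be precisely this last reconciliation in the equal-angle case: certifying that the rigid edge-sum ratio imposed on a deformable non-adjacent quadrilateral cannot agree with the freely varying ratio carried by an adjacent, weakly edge-admissible $\Om$. This is the one place where the full strength of the hypotheses must be spent — the arithmetic of which values $\tfrac{\pi}{2a+1}+\tfrac{\pi}{2b+1}$ can equal $2\pi-2\beta$, the exclusion of Remark~\ref{rem:deform} quadrilaterals, and the perimeter-normalized eigenvalue lower bound together. Everywhere else the proof is an organized count of finitely many finite alternatives assembled from Lemmas~\ref{lem:n-3}, \ref{om vs omred}, \ref{lem:Euc geom}, and~\ref{lem:vectors}.
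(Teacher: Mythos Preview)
Your overall architecture matches the paper's: stratify by the number of odd angles, dispatch $k=0$ to Theorem~\ref{thm:upper bound}, dispatch $k=1$ and $k=2$ (adjacent) to Theorem~\ref{thm: not two adjacent odd}, and in the non-adjacent case use Lemma~\ref{lem:remaining angles} to force unequal non-odd angles, then Proposition~\ref{prop: non adjacent odd} plus Lemma~\ref{lem:vectors}(a) to finish. That is exactly what the paper does, in the same order and with the same lemmas.

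Where you diverge is in insisting on tracking \emph{cross-type} competitors: e.g., when $\Om$ has two adjacent odd angles you worry about a rival $\Om'$ with two \emph{non-adjacent} odd angles and equal non-odd angles, which would be of Remark~\ref{rem:deform} type and hence sit in a one-parameter family. Your rigidity observation for such $\Om'$---that the edge-sum ratio is forced to equal $\sin(\theta_2/2)/\sin(\theta_1/2)$---is correct (one can verify it on the kite representative, and Lemma~\ref{lem:vectors}(b) shows the ratio is constant along the deformation), but as you yourself note it does not by itself rule out a coincidence of ratios, so it cannot close the argument. The paper simply does not enter this territory: in its Case~2 it invokes Theorem~\ref{thm: not two adjacent odd} (finiteness within~$\cp$) without separately discussing $\cpp$-competitors, and in its Case~3 the hypothesis already places $\Om$ in~$\cpp$ with unequal non-odd angles, after which Lemma~\ref{lem:remaining angles} excludes Remark~\ref{rem:deform}-type rivals outright. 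In effect the paper reads the conclusion as finiteness among competitors that are themselves not of Remark~\ref{rem:deform} type; under that reading your extra case analysis is unnecessary, and the paper's shorter argument suffices. Your more literal reading of ``within the class of all weakly edge-admissible convex quadrilaterals'' exposes a genuine subtlety the paper does not explicitly resolve, and your edge-sum ratio computation is a reasonable first step toward it---but it is additional to, not a substitute for, the paper's proof.
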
 

\begin{proof} 
If $\Omega$ has no odd angles, then it is admissible, and the proof is completed by Theorem~\ref{thm:upper bound}.  If $\Omega$ has one odd angle or has two adjacent odd angles, then the proof is completed by Theorem~\ref{thm: not two adjacent odd}.  We are left with the case that $\Omega$ has two non-adjacent odd angles and two unequal non-odd angles.  Any other weakly edge-admissible quadrilateral with the same characteristic polynomial as $\Omega$ must also have two odd angles, and Lemma~\ref{lem:remaining angles} tells us that the non-odd angles are unequal. Choosing the cyclic ordering so that $\alpha_1$ and $\alpha_3$ are the odd angles, Proposition~\ref{prop: non adjacent odd} yields $\balpha(\Om)$ up to finitely many possibilities.   Lemma~\ref{om vs omred} also yields $\ell_1+\ell_2$ and $\ell_3+\ell_4$.  Thus all the hypotheses of Lemma~\ref{lem:vectors} hold which, together with the fact that $\Phi\neq \Psi$, concludes our proof. 
\end{proof}

\section{Outlook} \label{s:outlook}

We obtained a collection of inverse spectral results for the Steklov eigenvalue problem on polygonal domains.  It is natural to compare these results and, more generally, the results of \cite{klpps21} for simply connected curvilinear domains, with the analogous inverse results for the Laplace eigenvalue problem with Dirichlet or Neumann boundary conditions.  The Laplace spectrum distinguishes simply-connected curvilinear polygons from all bounded plane domains, simply-connected or otherwise, with smooth boundary; see \cite{corners1} (Dirichlet case) and \cite{nrs1} (Neumann and Dirichlet).  The latter article also obtains similar results with Robin boundary conditions.   The article \cite{nrs2} extends these results to more general surfaces with piecewise smooth boundary under an additional hypothesis on the Euler characteristic.  However, it is not known whether the Laplace spectrum detects the number of vertices in a curvilinear domain.    In contrast, while the question of whether the Steklov spectrum can always distinguish curvilinear polygons from smooth domains remains open, the results of \cite{klpps21} for the Steklov problem provide much greater information (e.g., number of vertices, edge lengths) for admissible -- thus generic -- curvilinear $n$-gons with all angles in $(0,\pi)$.   

 To our knowledge, the question of generic finiteness of Dirichlet or Neumann isospectral sets of $n$-gons -- convex or otherwise -- remains open.   This situation contrasts with the results of Section~\ref{sec: isospec size} for the Steklov eigenvalue problem.   However, spectral uniqueness for the Laplace spectrum is known within certain classes of polygons; e.g., triangles are mutually distinguishable by their Laplace spectrum \cite{durso}, \cite{gm_triangles}, with either Dirichlet or Neumann boundary condition. Non-obtuse trapezoids are mutually distinguishable by their Dirichlet spectrum \cite{dtraps} and also by their Neumann spectrum \cite{ntraps}.   The currently known Steklov spectral invariants are not sufficient to mutually distinguish all triangles, although we will see in an upcoming paper that Steklov isospectral sets of triangles are always finite and generic triangles are uniquely determined by their Steklov spectra.   We will also address additional classes of convex $n$-gons.  

Many examples exist, beginning with \cite{gww}, of non-congruent polygonal domains that are isospectral for the Laplacian with both Dirichlet and Neumann boundary conditions.  The maximal possible size of mutually Laplace isospectral sets of non-congruent polygonal domains in the plane is unknown.    
The question of existence of Steklov isospectral plane domains remains open in both the convex and the non-convex case.   However, the known examples of Laplace isospectral plane domains are also isospectral for both a mixed Steklov-Neumann problem and a mixed Steklov-Dirichlet problem \cite{ghw}. 

Although the polygonal examples of Dirichlet and Neumann isospectral plane domains provide a negative answer to Mark Kac's question about hearing the shape of a drum \cite{kac1966}, the question remains open for domains with smooth boundary and for convex domains.    
Watanabe \cite{watanabe2} used heat trace methods to show that there exist oval-shaped domains that are uniquely determined by their Dirichlet (or Neumann) spectra among all bounded planar domains.  Around the same time, Zelditch used wave trace methods to prove that domains with an analytic bi-axisymmetric boundary are uniquely determined by their Laplace spectra within this class of domains \cite{zelditch0}. More recently, Hezari and Zelditch proved that within the class of ellipses with small eccentricity, each element is uniquely determined by its Dirichlet or Neumann Laplace spectrum \cite{2022_ellipses}.  They also proved a similar result for generic real analytic centrally symmetric plane domains \cite{2023_analytic}.   We refer interested readers to the surveys \cite{pre_survey}, \cite{gir_pol},  \cite{112survey} for further reading on the Laplace and Steklov inverse spectral problems.

\begin{bibdiv}
\begin{biblist}

\bib{agr06}{article}{
   author={Agranovich, M. S.},
   title={On a mixed Poincar\'{e}-Steklov type spectral problem in a
   Lipschitz domain},
   journal={Russ. J. Math. Phys.},
   volume={13},
   date={2006},
   number={3},
   pages={239--244},
   issn={1061-9208},
   review={\MR{2262827}},
   doi={10.1134/S1061920806030010},
}

\bib{bir_sol_70}{article}{
   author={Birman, M. \v{S}.},
   author={Solomjak, M. Z.},
   title={The principal term of the spectral asymptotics for ``non-smooth''\
   elliptic problems},
   language={Russian},
   journal={Funkcional. Anal. i Prilo\v{z}en.},
   volume={4},
   date={1970},
   number={4},
   pages={1--13},
   issn={0374-1990},
   review={\MR{0278126}},
 note={English translation in Functional Analysis Appl. 4 (1970), 265-275 (1971)}
}

\bib{pre_survey}{article}{
   author={Colbois, B.},
   author={Girouard, A.},
   author={Gordon, C.},
   author={Sher, D.},
   title={Some recent developments on the Steklov eigenvalue problem},
   journal={Rev. Mat. Complut.},
   volume={37},
   date={2024},
   number={1},
   pages={1--161},
   issn={1139-1138},
   review={\MR{4695859}},
   doi={10.1007/s13163-023-00480-3},
}

\bib{durso}{thesis}{
  title={On the inverse spectral problem for polygonal domains},
  author={Durso, C.},
  year={1988},
  school={Massachusetts Institute of Technology}
}

\bib{edward93}{article}{
   author={Edward, J.},
   title={An inverse spectral result for the Neumann operator on planar
   domains},
   journal={J. Funct. Anal.},
   volume={111},
   date={1993},
   number={2},
   pages={312--322},
   issn={0022-1236},
   review={\MR{1203456}},
   doi={10.1006/jfan.1993.1015},
}

\bib{gir_pol}{article}{
   author={Girouard, A.},
   author={Polterovich, I.},
   title={Spectral geometry of the Steklov problem (survey article)},
   journal={J. Spectr. Theory},
   volume={7},
   date={2017},
   number={2},
   pages={321--359},
   issn={1664-039X},
   review={\MR{3662010}},
   doi={10.4171/JST/164},
}

\bib{zhir_pol}{article}{
   author={Girouard, A.},
   author={Polterovich, I.},
   title={On the Hersch-Payne-Schiffer estimates for the eigenvalues of the
   Steklov problem},
   language={Russian, with Russian summary},
   journal={Funktsional. Anal. i Prilozhen.},
   volume={44},
   date={2010},
   number={2},
   pages={33--47},
   issn={0374-1990},
   translation={
      journal={Funct. Anal. Appl.},
      volume={44},
      date={2010},
      number={2},
      pages={106--117},
      issn={0016-2663},
   },
   review={\MR{2681956}},
   doi={10.1007/s10688-010-0014-1},
}
\bib{ghw}{article} {
     author={Gordon, C.},
author={ Herbrich, P.}, 
author={Webb, D.}
     TITLE = {Steklov and Robin isospectral manifolds},
   JOURNAL = {J. Spectr. Theory},
  FJOURNAL = {Journal of Spectral Theory},
    VOLUME = {11},
      YEAR = {2021},
    NUMBER = {1},
     PAGES = {39--61},
      ISSN = {1664-039X},
   MRCLASS = {58J53 (35J20 35J25)},
  MRNUMBER = {4233205},
MRREVIEWER = {Emilio Agust\'{\i}n Lauret},
       DOI = {10.4171/jst/335},
       URL = {https://doi.org/10.4171/jst/335},
}

\bib{gww}{article}{
   author={Gordon, C.},
   author={Webb, D.},
   author={Wolpert, S.},
   title={Isospectral plane domains and surfaces via Riemannian orbifolds},
   journal={Invent. Math.},
   volume={110},
   date={1992},
   number={1},
   pages={1--22},
   issn={0020-9910},
   review={\MR{1181812}},
   doi={10.1007/BF01231320},
}

\bib{gm_triangles}{article}{
  title={Hearing the shape of a triangle},
  author={Grieser, D.}, 
  author={Maronna, S.},
  journal={Notices of the AMS},
  volume={60},
  number={11},
  pages={1440--1447},
  year={2013}
}

\bib{ntraps}{article}{
   author={Hezari, H.},
   author={Lu, Z.},
   author={Rowlett, J.},
   title={The Neumann isospectral problem for trapezoids},
   journal={Ann. Henri Poincar\'e},
   volume={18},
   date={2017},
   number={12},
   pages={3759--3792},
   issn={1424-0637},
   review={\MR{3723340}},
   doi={10.1007/s00023-017-0617-7},
}

\bib{dtraps}{article}{
   author={Hezari, H.},
   author={Lu, Z.},
   author={Rowlett, J.},
   title={The Dirichlet isospectral problem for trapezoids},
   journal={J. Math. Phys.},
   volume={62},
   date={2021},
   number={5},
   pages={Paper No. 051511, 13},
   issn={0022-2488},
   review={\MR{4262854}},
   doi={10.1063/5.0036384},
}

\bib{2022_ellipses}{article}{
   author={Hezari, H.},
   author={Zelditch, S.},
   title={One can hear the shape of ellipses of small eccentricity},
   journal={Ann. of Math. (2)},
   volume={196},
   date={2022},
   number={3},
   pages={1083--1134},
   issn={0003-486X},
   review={\MR{4502596}},
   doi={10.4007/annals.2022.196.3.4},
}

\bib{2023_analytic}{article}{
   author={Hezari, H.},
   author={Zelditch, S.},
   title={Centrally symmetric analytic plane domains are spectrally
   determined in this class},
   journal={Trans. Amer. Math. Soc.},
   volume={376},
   date={2023},
   number={11},
   pages={7521--7553},
   issn={0002-9947},
   review={\MR{4657215}},
   doi={10.1090/tran/8889},
}

\bib{kac1966}{article}{
   author={Kac, M.},
   title={Can one hear the shape of a drum?},
   journal={Amer. Math. Monthly},
   volume={73},
   date={1966},
   number={4},
   pages={1--23},
   issn={0002-9890},
   review={\MR{0201237}},
   doi={10.2307/2313748},
}

\bib{klpps21}{article}{
   author={Krymski, S.},
   author={Levitin, M.},
   author={Parnovski, L.},
   author={Polterovich, I.},
   author={Sher, David A.},
   title={Inverse Steklov spectral problem for curvilinear polygons},
   journal={Int. Math. Res. Not. IMRN},
   date={2021},
   number={1},
   pages={1--37},
   issn={1073-7928},
   review={\MR{4198492}},
   doi={10.1093/imrn/rnaa200},
}

\bib{ks20}{article}{
   author={Kurasov, P.},
   author={Suhr, R.},
   title={Asymptotically isospectral quantum graphs and generalised
   trigonometric polynomials},
   journal={J. Math. Anal. Appl.},
   volume={488},
   date={2020},
   number={1},
   pages={124049,15},
   issn={0022-247X},
   review={\MR{4079594}},
   doi={10.1016/j.jmaa.2020.124049},
}

\bib{legacy}{article}{
   author={Kuznetsov, N.},
   author={Kulczycki, T.},
   author={Kwa\'{s}nicki, M.},
   author={Nazarov, A.},
   author={Poborchi, S.},
   author={Polterovich, I.},
   author={Siudeja, B.},
   title={The legacy of Vladimir Andreevich Steklov},
   journal={Notices Amer. Math. Soc.},
   volume={61},
   date={2014},
   number={1},
   pages={9--22},
   issn={0002-9920},
   review={\MR{3137253}},
   doi={10.1090/noti1073},
}

\bib{lpps19}{article}{
   author={Levitin, M.},
   author={Parnovski, L.},
   author={Polterovich, I.},
   author={Sher, D. A.},
   title={Sloshing, Steklov and corners: asymptotics of Steklov eigenvalues
   for curvilinear polygons},
   journal={Proc. Lond. Math. Soc. (3)},
   volume={125},
   date={2022},
   number={3},
   pages={359--487},
   issn={0024-6115},
   review={\MR{4480880}},
   doi={10.1112/plms.12461},
}

\bib{corners1}{article}{
  title={One can hear the corners of a drum},
  author={Lu, Z.},
  author={Rowlett, J.},
  journal={Bulletin of the London Mathematical Society},
  volume={48},
  number={1},
  pages={85--93},
  year={2016},
  publisher={Oxford University Press}
}

\bib{112survey}{misc}{
title={112 years of listening to Riemannian manifolds}, 
author={M\aa rdby, G.}, 
author={Rowlett, J.}, 
year={2024}, 
note={\url{https://arxiv.org/pdf/2406.18369}}
}

\bib{nrs2}{misc}{
  title={The heat kernel on curvilinear polygonal domains in surfaces},
  author={Nursultanov, M.}, 
  author={ Rowlett, J.},
  author={Sher, D.},
  note={\url{https://arxiv.org/abs/1905.00259}},
  year={2019}
}

\bib{nrs1}{article}{
  title={How to hear the corners of a drum},
  author={Nursultanov, M.},
author={ Rowlett, J.}, 
author={Sher, D.},
  journal={2017 MATRIX Annals},
  pages={243--278},
  year={2019},
  publisher={Springer}
}

\bib{rozenblum79}{article}{
   author={Rozenbljum, G. V.},
   title={Asymptotic behavior of the eigenvalues for some two-dimensional
   spectral problems},
   language={Russian},
   conference={
      title={Boundary value problems. Spectral theory (Russian)},
   },
   book={
      series={Probl. Mat. Anal.},
      volume={7},
      publisher={Leningrad. Univ., Leningrad},
   },
   date={1979},
   pages={188--203, 245},
   review={\MR{0559110}},
}

\bib{watanabe2}{article}{
   author={Watanabe, K.},
   title={Plane domains which are spectrally determined. II},
   journal={J. Inequal. Appl.},
   volume={7},
   date={2002},
   number={1},
   pages={25--47},
   issn={1025-5834},
   review={\MR{1923566}},
   doi={10.1155/S1025583402000036},
}

\bib{Wein} {article}{
    AUTHOR = {Weinstock, Robert},
     TITLE = {Inequalities for a classical eigenvalue problem},
   JOURNAL = {J. Rational Mech. Anal.},
  FJOURNAL = {Journal of Rational Mechanics and Analysis},
    VOLUME = {3},
      YEAR = {1954},
     PAGES = {745--753},
      ISSN = {1943-5282},
   MRCLASS = {35.0X},
  MRNUMBER = {64989},
MRREVIEWER = {J. L. Lions},
       DOI = {10.1512/iumj.1954.3.53036},
       URL = {https://doi.org/10.1512/iumj.1954.3.53036},
}

\bib{zelditch0}{article}{
   author={Zelditch, S.},
   title={Spectral determination of analytic bi-axisymmetric plane domains},
   journal={Geom. Funct. Anal.},
   volume={10},
   date={2000},
   number={3},
   pages={628--677},
   issn={1016-443X},
   review={\MR{1779616}},
   doi={10.1007/PL00001633},
}

\end{biblist}
\end{bibdiv}

\end{document}